\newcommand{\BC}{{\mathbb C}}\newcommand{\BD}{{\mathbb D}}
\newcommand{\BN}{{\mathbb N}}
\newcommand{\BR}{{\mathbb R}}
\newcommand{\BT}{{\mathbb T}}
\newcommand{\cD}{{\mathcal D}}
\newcommand{\cE}{{\mathcal E}}
\newcommand{\cH}{{\mathcal H}}
\newcommand{\cK}{{\mathcal K}}\newcommand{\cL}{{\mathcal L}}
\newcommand{\cM}{{\mathcal M}}
\newcommand{\cU}{{\mathcal U}}\newcommand{\cV}{{\mathcal V}}
\newcommand{\cW}{{\mathcal W}}\newcommand{\cX}{{\mathcal X}}
\newcommand{\cY}{{\mathcal Y}}\newcommand{\cZ}{{\mathcal Z}}
\newcommand{\bS}{{\mathbf S}}
\newcommand{\fR}{{\mathfrak R}}
\newcommand{\wtilD}{\widetilde{D}}
\newcommand{\wtilM}{\widetilde{M}}\newcommand{\wtilN}{\widetilde{N}}
\newcommand{\wtilQ}{\widetilde{Q}}\newcommand{\wtilR}{\widetilde{R}}
\newcommand{\wtilX}{\widetilde{X}}
\newcommand{\wtilY}{\widetilde{Y}}
\newcommand{\al}{\alpha}
\newcommand{\de}{\delta}
\newcommand{\vep}{\varepsilon}
\newcommand{\la}{\lambda}
\newcommand{\vph}{\varphi}
\newcommand{\om}{\omega}
\newcommand{\ran}{\textup{Ran\,}}
\newcommand{\im}{\textup{Im\,}}
\newcommand{\re}{\textup{Re\,}}
\newcommand{\kr}{\textup{Ker\,}}
\newcommand{\diag}{\textup{diag\,}}
\newcommand{\mat}[2]{\ensuremath{\left[\begin{array}{#1}#2\end{array} \right]}}
\newcommand{\sbm}[1]{\left[\begin{smallmatrix} #1\end{smallmatrix}\right]}
\newcommand{\ov}[1]{{\overline{#1}}}
\newcommand{\un}[1]{{\underline{#1}}}
\newcommand{\inn}[2]{\ensuremath{\langle #1,#2 \rangle}}
\newcommand{\wtil}[1]{{\widetilde{#1}}}
\newcommand{\half}{\frac{1}{2}}
\newcommand{\ands}{\quad\mbox{and}\quad}
\newcommand{\ons}{\mbox{ on }}
\newtheorem{theorem}{Theorem}[section]
\newtheorem{corollary}[theorem]{Corollary}
\newtheorem{lemma}[theorem]{Lemma}
\newtheorem{proposition}[theorem]{Proposition}
\theoremstyle{definition}
\newtheorem{example}[theorem]{Example}
\newcommand{\oran}{\overline{\textup{Ran}}\,}
\newcommand{\precI}{\stackrel{{\scriptscriptstyle \infty\ \ }}{\prec}\!\!}
\newcommand{\simI}{\stackrel{{\scriptscriptstyle \infty}}{\sim}}
\title[A pre-order and an equivalence relation on Schur class functions]{A pre-order and an equivalence relation on Schur class functions and their invariance under linear fractional transformations}
\author{S. ter Horst}
\address{%
Unit for BMI, North West University\\
Private Bag X6001-209, Potchefstroom 2520, South Africa}
\email{sanne.terhorst@nwu.ac.za}
\subjclass[2010]{Primary 47A56; Secondary 47A57, 47B35}
\keywords{Schur class functions, operator pre-order, operator equivalence relation, linear fractional transformations}
\date{}
\begin{document}
\maketitle

\begin{abstract}
Motivated by work of Yu.L. Shmul'yan a pre-order and an equivalence relation on the set of operator-valued Schur class functions are introduced and the behavior of Redheffer linear fractional transformations (LFTs) with respect to these relations is studied. In particular, it is shown that Redheffer LFTs preserve the equivalence relation, but not necessarily the pre-order. The latter does occur under some additional assumptions on the coefficients in the Redheffer LFT.
\end{abstract}

\setcounter{section}{-1}
\section{Introduction}
\setcounter{equation}{0}

In a 1980 paper \cite{S80} Yu.L. Shmul'yan introduced a pre-order relation on the set of Hilbert space contractions, and showed that linear fractional maps of Redheffer type, as initially studied by R.M. Redheffer in \cite{R60,R62}, preserve this pre-order.

To be more precise, let $\cH_1$ and $\cH_2$ be Hilbert spaces. With $\cL_1(\cH_1,\cH_2)$ we denote the set of contractions from $\cH_1$ to $\cH_2$, and, given $C\in \cL_1(\cH_1,\cH_2)$, we write $D_C$ for the {\em defect operator} $D_C=(I-C^*C)^{\half}$ and $\cD_C$ for the {\em defect space} $\cD_C=\oran D_C$ associated with $C$. Then for $A,B\in\cL_1(\cH_1,\cH_2)$ Shmul'yan writes $A\prec B$ if there exists a bounded operator $X$ from $\cD_B$ into $\cD_{B^*}$ such that $A-B=D_{B^*}XD_B$, and proves that this defines a pre-order on $\cL_1(\cH_1,\cH_2)$. (Actually, in \cite{S80} this is denoted by $B\prec A$, however, more in line with other pre-orders on $\cL_1(\cH_1,\cH_2)$, the order is reversed in \cite{KSS91}, and we will adopt the notation of \cite{KSS91} here.) Section \ref{S:BallPreOrder} contains a detailed discussion of the pre-order $\prec$ and the corresponding equivalence relation $\sim$. To give an idea, the set of strict contractions form an equivalence class, and strict contractions dominate all other contractions, and if $A\in\cL_1(\cH_1,\cH_2)$ is an isometry or co-isometry, then $A$ forms an equivalence class by itself and $A$ is dominated by no other contraction than itself, see Lemma \ref{L:ConesClasses} below.

Now let $\cK_1$ and $\cK_2$ be two additional Hilbert spaces and let
\begin{equation}\label{M}
M=\mat{cc}{M_{11}&M_{12}\\ M_{21} &M_{22}}:\mat{c}{\cH_2\\ \cK_1}\to \mat{cc}{\cH_1\\ \cK_2}
\end{equation}
be a contraction in $\cL_1(\cH_2\oplus\cK_1, \cH_1\oplus\cK_2)$. Then the Redheffer (linear fractional) map associated with $M$ is defined as
\[
\fR_M[A]=M_{22}+M_{21}A(I-M_{11}A)^{-1}M_{12}\quad (A\in\cL_1(\cH_1,\cH_2),\ I-M_{11}A\mbox{ invertible}).
\]
Shmul'yan proved the following result.

\begin{theorem}[Theorem 7 in \cite{S80}]\label{T:Shmul'yan}
Let $M\in \cL_1(\cH_2\oplus\cK_1, \cH_1\oplus\cK_2)$, as in \eqref{M}, and $A,B\in\cL_1(\cH_1,\cH_2)$ such that $I-M_{11}A$ and $I-M_{11}B$ are invertible. Assume $A\prec B$. Then $\fR_M[A]\prec \fR_M[B]$.
\end{theorem}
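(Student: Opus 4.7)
The plan is to combine two ingredients: an algebraic resolvent identity for $\fR_M[A] - \fR_M[B]$ and Douglas' factorization lemma applied to the defect operators of $\Theta := \fR_M[B]$. Writing $P_X := (I-M_{11}X)^{-1}$ and using the standard resolvent identity $P_A - P_B = P_B M_{11}(A-B)P_A$, a short computation gives
\begin{equation*}
AP_A - BP_B = B(P_A - P_B) + (A-B)P_A = (I-BM_{11})^{-1}(A-B)P_A,
\end{equation*}
and multiplying by $M_{21}$ on the left and $M_{12}$ on the right yields
\begin{equation*}
\fR_M[A] - \fR_M[B] = M_{21}(I-BM_{11})^{-1}(A-B)(I-M_{11}A)^{-1}M_{12}.
\end{equation*}
Substituting $A - B = D_{B^*} X D_B$ splits the difference as
\begin{equation*}
\fR_M[A] - \fR_M[B] = \bigl[M_{21}(I-BM_{11})^{-1}D_{B^*}\bigr]\, X\, \bigl[D_B(I-M_{11}A)^{-1}M_{12}\bigr].
\end{equation*}

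The next step is to absorb the two outer factors into $D_{\Theta^*}$ on the left and $D_\Theta$ on the right. For $\eta \in \cK_1$ set $u = P_B M_{12}\eta$ and $v = \Theta\eta$, so that $\sbm{u\\ v} = M\sbm{Bu\\ \eta}$. Contractivity of both $M$ and $B$ then yields the energy balance
\begin{equation*}
\|D_\Theta \eta\|^2 = \|\eta\|^2 - \|v\|^2 = \bigl\|D_M \sbm{Bu\\ \eta}\bigr\|^2 + \|D_B u\|^2,
\end{equation*}
and an analogous identity holds for $\|D_{\Theta^*} y\|^2$ (via $\fR_M[B]^* = \fR_{M^*}[B^*]$). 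Dropping the nonnegative $D_M$- and $D_{M^*}$-terms produces
\begin{align*}
D_\Theta^2 & \ge M_{12}^*(I-B^*M_{11}^*)^{-1}D_B^2(I-M_{11}B)^{-1}M_{12}, \\
D_{\Theta^*}^2 & \ge M_{21}(I-BM_{11})^{-1}D_{B^*}^2(I-M_{11}^*B^*)^{-1}M_{21}^*,
\end{align*}
and Douglas' lemma then supplies bounded operators $V: \cD_\Theta \to \cD_B$ and $U: \cH_2 \to \cD_{\Theta^*}$ satisfying
\begin{equation*}
D_B(I-M_{11}B)^{-1}M_{12} = V D_\Theta \ands M_{21}(I-BM_{11})^{-1}D_{B^*} = D_{\Theta^*} U.
\end{equation*}

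The right-hand factor $D_B(I-M_{11}A)^{-1}M_{12}$ still involves $A$; this is dealt with by a second resolvent identity,
\begin{equation*}
(I-M_{11}A)^{-1} = (I-M_{11}B)^{-1} + (I-M_{11}A)^{-1}M_{11}(A-B)(I-M_{11}B)^{-1}.
\end{equation*}
Substituting $A - B = D_{B^*} X D_B$ and using the factorization of $D_B(I-M_{11}B)^{-1}M_{12}$ from the previous step, I obtain
\begin{equation*}
D_B(I-M_{11}A)^{-1}M_{12} = \bigl[I + D_B(I-M_{11}A)^{-1}M_{11}D_{B^*} X\bigr] V D_\Theta =: V' D_\Theta,
\end{equation*}
with $V'$ bounded because $(I-M_{11}A)^{-1}$ is bounded by hypothesis. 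Assembling everything,
\begin{equation*}
\fR_M[A] - \fR_M[B] = D_{\Theta^*}\, (U X V')\, D_\Theta,
\end{equation*}
so $Y := U X V'$ is a bounded operator from $\cD_\Theta$ into $\cD_{\Theta^*}$ witnessing $\fR_M[A] \prec \fR_M[B]$.

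I expect the main technical obstacle to lie in the second paragraph: deriving the energy balance in a form clean enough to extract the operator inequalities for $D_\Theta^2$ and $D_{\Theta^*}^2$, and verifying that the Douglas factorizations produce operators with the correct action on the defect subspaces $\cD_B, \cD_{B^*}, \cD_\Theta, \cD_{\Theta^*}$. Once those inequalities and factorizations are in hand, the rest is algebraic substitution, with the second resolvent identity playing the role of a ``correction'' that absorbs the residual $A$-dependence in the right factor.
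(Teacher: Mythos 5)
Your argument is correct and is essentially the argument the paper uses: the paper quotes this operator statement from \cite{S80} without reproving it, but its proof of the function-valued generalization (Theorem \ref{T:RedInvar2}, via Lemma \ref{L:EssIneq}, Corollary \ref{C:confuncts} and Lemma \ref{L:EssComp}) is built from exactly your three ingredients --- the difference identity for $\fR_M[A]-\fR_M[B]$, the energy-balance inequalities for $D_{\fR_M[B]}^2$ and $D_{\fR_M[B]^*}^2$ followed by Douglas factorization, and a resolvent correction absorbing the residual $A$-dependence. The only cosmetic difference is that you place the $A$-resolvent (and hence the correction factor $V'$) on the right of the difference identity while the paper places it on the left; this is an immaterial mirror image of the same computation.
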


The aim of the present paper is to investigate generalizations and extensions of this result to the case where the operators are replaced by Schur class functions, i.e., contractive analytic functions. For separable Hilbert spaces $\cU$ and $\cY$ we write $S(\cU,\cY)$ for the {\em Schur class} consisting of $\cL_1(\cU,\cY)$-valued analytic functions on the open unit disc $\BD=\{\la \colon |\la|<1\}$.  Recall that a Schur class function $F\in S(\cU,\cY)$ defines a contractive analytic Toeplitz operator $T_F$ between the vector-valued Hardy spaces $H^2_\cU$ and $H^2_\cY$ by
\[
(T_F h)(\la)=F(\la)h(\la)\quad (h\in H^2_\cU,\ \la\in\BD).
\]
(Analytic here means $T_F$ intertwines the forward shift operators on $H^2_\cU$ and $H^2_\cY$.) It is well known that the supremum norm of $F$ on $\BD$ equals the operator norm of $T_F$. This enables us to define a pre-order on $S(\cU,\cY)$, denoted as $\precI$, by defining $F\precI G$ whenever $T_F\prec T_G$. The following theorem gives a characterization of $\precI$ in terms of the elements of $S(\cU,\cY)$.

\begin{theorem}\label{T:SCprecDef}
Let $F,G\in S(\cU,\cY)$. Then $T_F\prec T_G$ if and only if there existence of a bounded operator-valued function $Q$ on $\BD$ such that
\[
F(\la)-G(\la)=D_{G(\la)^*}Q(\la) D_{G(\la)}\quad (\la\in\BD).
\]
\end{theorem}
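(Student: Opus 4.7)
The strategy is to prove the two directions separately. Both rely on Douglas' factorization lemma, but with different bridges between the operator level ($H^2$) and the pointwise level ($\BD$): reproducing kernels for one direction, radial boundary values for the other.

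For the ``only if'' direction, the plan is to evaluate the identity $T_F - T_G = D_{T_G^*} X D_{T_G}$ on the Szeg\H{o} reproducing kernel $k_\la(z) = (1-\bar\la z)^{-1}$. Using $T_G^* (k_\la y) = k_\la G(\la)^* y$ for $y \in \cY$, one computes exactly
\[
\|D_{T_G^*} k_\la y\|^2 = \frac{\|D_{G(\la)^*} y\|^2}{1 - |\la|^2},
\]
so the assignment $V_\la D_{G(\la)^*} y := \sqrt{1-|\la|^2}\, D_{T_G^*} k_\la y$ is an isometry from $\cD_{G(\la)^*}$ into $\cD_{T_G^*}$. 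On the domain side, using the Poisson representation of $\|T_G k_\la u\|^2$ together with the subharmonicity of $\|G(e^{i\te})u\|^2$ gives only the inequality
\[
\|D_{T_G} k_\la u\|^2 \le \frac{\|D_{G(\la)} u\|^2}{1 - |\la|^2},
\]
but Douglas' lemma then produces a contraction $W_\la \colon \cD_{G(\la)} \to \cD_{T_G}$ with $W_\la D_{G(\la)} u = \sqrt{1-|\la|^2}\, D_{T_G} k_\la u$. Pairing the operator identity against $k_\la u$ and $k_\la y$ and extracting via the reproducing property then yields $(F(\la) - G(\la)) u = D_{G(\la)^*} V_\la^* X W_\la D_{G(\la)} u$, so $Q(\la) := V_\la^* X W_\la$ works, with $\|Q(\la)\| \le \|X\|$ uniformly in $\la$.

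For the ``if'' direction, the plan is to verify the two conditions that, by two applications of Douglas' lemma, are equivalent to $T_F - T_G = D_{T_G^*} X D_{T_G}$ for some bounded $X$: namely (i) $\|(T_F - T_G) h\| \le C\, \|D_{T_G} h\|$ for all $h \in H^2_\cU$, and (ii) $\ran(T_F - T_G) \subseteq \oran D_{T_G^*}$. Both reduce to pointwise boundary estimates via the identity $\|D_{T_G} h\|^2 = \int \|D_{G(e^{i\te})} h(e^{i\te})\|^2\, d\mu$. From the factorization hypothesis with $C = \sup_\la \|Q(\la)\|$ one reads off $\|(F(\la) - G(\la)) u\| \le C \|D_{G(\la)} u\|$ on $\BD$; the strong-operator existence of radial limits for Schur class functions, combined with a countable dense subset argument in $\cU$, passes this to a single full-measure boundary estimate that holds for all $u$ simultaneously, and integrating yields (i). For (ii), if $k \in \kr D_{T_G^*}$, the inequality $\|D_{T_G^*} k\|^2 \ge \int \|D_{G(e^{i\te})^*} k(e^{i\te})\|^2\, d\mu$ (from $\|T_G^* k\|_{H^2} \le \|G^* k\|_{L^2}$) forces $D_{G(e^{i\te})^*} k(e^{i\te}) = 0$ a.e.; combined with the analogous boundary estimate $\|(F - G)(e^{i\te})^* v\| \le C\, \|D_{G(e^{i\te})^*} v\|$ (obtained identically), this gives $(F - G)^* k = 0$ on the boundary and hence $(T_F - T_G)^* k = P_+((F - G)^* k) = 0$.

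The main obstacle is the boundary transfer in the ``if'' direction. The factorization furnishes pointwise estimates on $\BD$, but (i) and (ii) require boundary estimates that hold uniformly in the test vectors $u$ and $v$. Securing this relies on the separability of $\cU$ and $\cY$, on the strong-operator existence of non-tangential limits for Schur class operator-valued functions, and on a density argument to synchronize the exceptional sets across vectors. With those in place, the remainder of the ``if'' direction is a routine double invocation of Douglas' lemma, and the proof is symmetric in spirit between the two directions even if technically asymmetric (reproducing kernels versus boundary integration).
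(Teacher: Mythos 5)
Your ``only if'' direction is correct and takes a genuinely different route from the paper's: the paper deduces the pointwise factorization from the convex-combination characterization (POiii) --- the same radius $r$ works at every $\la\in\BD$ because $\|(1-\vep)G+\vep F\|_\infty\le 1$ --- and then invokes the quantitative bounds of Theorem \ref{T:prec}(iii) to get $\sup_\la\|Q(\la)\|<\infty$, whereas you extract $Q(\la)=V_\la^*XW_\la$ directly by testing the operator identity on Szeg\H{o} kernels. Both computations you need ($\|D_{T_G^*}k_\la y\|^2=(1-|\la|^2)^{-1}\|D_{G(\la)^*}y\|^2$ exactly, and $\|D_{T_G}k_\la u\|^2\le(1-|\la|^2)^{-1}\|D_{G(\la)}u\|^2$ by subharmonicity) check out, and you even get the cleaner bound $\|Q\|_\infty\le\|X\|$.

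The ``if'' direction, however, has a genuine gap at the step ``two applications of Douglas' lemma.'' Your condition (i) does follow from the hypothesis (via $\|D_{L_G}h\|\le\|D_{T_G}h\|$ for $h\in H^2_\cU$) and yields $T_F-T_G=ZD_{T_G}$ with $\|Z\|\le C$. But to upgrade this to $Z=D_{T_G^*}X$ with $X$ \emph{bounded}, Douglas' lemma requires the majorization $ZZ^*\le\ga^2 D_{T_G^*}^2$, equivalently the genuine range inclusion $\ran Z\subseteq\ran D_{T_G^*}$. Your condition (ii) only gives an inclusion of \emph{closures} of ranges (what you actually verify is $\kr D_{T_G^*}\subseteq\kr(T_F-T_G)^*$), and that is strictly weaker: if $D$ is positive, injective and compact with dense range, then $Z=D^{1/2}$ satisfies $\oran Z\subseteq\oran D$ but $Z\ne DX$ for any bounded $X$. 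Even adding the symmetric estimate $\|(T_F-T_G)^*k\|\le C\|D_{T_G^*}k\|$, which your boundary argument would also yield, only produces the two one-sided factorizations $T_F-T_G=ZD_{T_G}=D_{T_G^*}W$, and these together still do not force a two-sided factorization with a bounded middle term. The paper circumvents this by never applying Douglas at the $H^2$ level in this direction: from the pointwise factorization it passes to (SCPOii) and then uses the uniform lower bound \eqref{rineq} on $r_\la$ in terms of $\|R\|_\infty$ to obtain (SCPOiii), i.e.\ $\|(1-\vep)G+\vep F\|_\infty\le1$ for all $|\vep|\le r$, which is exactly (POiii) for $T_F\prec T_G$. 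To repair your route you would need a quantitative two-sided input at the operator level --- e.g.\ the sandwich $-\de D_{T_G}^2\le\re(I-T_F^*T_G)\le\de D_{T_G}^2$ and its analogue for the imaginary part, as in Lemma \ref{L:ReImIneq} --- rather than the kernel/closed-range condition (ii).
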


This result is one of several characterizations of $\precI$ that are proved in Theorem \ref{T:precSC} below. It is immediate from the results on $\prec$ in Section \ref{S:BallPreOrder} that the set of strict Schur class functions, i.e., $F\in S(\cU,\cY)$ with $\|F\|_\infty<1$, forms an equivalence class, which dominates all other equivalence classes, and each inner function, i.e.\ $F\in S(\cU,\cY)$ with $T_F$ isometric, forms an equivalence class on its own which is dominated by no other equivalence class but its own. In Section \ref{S:LaurentPreOrder} we also consider the impact of the pre-order $\precI$ on the boundary behavior of functions in $S(\cU,\cY)$. The equivalence relation defined by $\precI$ will be denoted by $\simI$.

Redheffer maps in this context involve a Schur class function $\Phi \in S(\cE'\oplus \cU,\cE\oplus\cY)$, with $\cE$ and $\cE'$ also separable Hilbert spaces, partitioned as
\begin{equation}\label{PhiDecom}
\Phi(\la)=\mat{cc}{\Phi_{11}(\la)& \Phi_{12}(\la)\\ \Phi_{21}(\la) &\Phi_{22}(\la)}:\mat{c}{\cE'\\ \cU}\to\mat{c}{\cE\\\cY}\quad (\la\in\BD).
\end{equation}
The Redheffer map associated with $\Phi$ is defined as
\begin{equation}\label{IntroRed}
\fR_\Phi[F](\la)=\Phi_{22}(\la)+ \Phi_{21}(\la)F(\la)(I-\Phi_{11}(\la)F(\la))^{-1}\Phi_{12}(\la) \quad (\la\in\BD)
\end{equation}
where $F$ in is $S(\cE,\cE')$ and should satisfy $\|\Phi_{11}(0)F(0)\|<1$. The additional assumption $\|\Phi_{11}(0)F(0)\|<1$ is necessary and sufficient for the inverse to be well defined for each $\la\in\BD$, and it is well known that $\fR_\Phi[F]\in S(\cU,\cY)$ for any $F\in S(\cE,\cE')$ which satisfies this constraint. Redheffer maps of this type play an important role in the theory of metric constrained interpolation and system and control theory, cf., \cite{AD08,FFGK98,F87} and the references given there. Typically in applications $\|\Phi_{11}(0)\|<1$, or even $\Phi_{11}(0)=0$, such that $\fR_\Phi$ is defined on the whole of $S(\cE,\cE')$.

The following theorem is the main result of the present paper, and will be proved in Section \ref{S:Redheffer}.

\begin{theorem}\label{T:RedInvar}
Let $\Phi \in S(\cE'\oplus \cU,\cE\oplus\cY)$ decompose as in \eqref{PhiDecom} and let $F_1,F_2\in S(\cE,\cE')$ with $F_1\simI F_2$. Then $\|\Phi_{11}(0)F_1(0)\|<1$ if and only if $\|\Phi_{11}(0)F_2(0)\|<1$. Moreover,  if $\|\Phi_{11}(0)F_i(0)\|<1$ holds for $i=1,2$, then $\fR_\Phi[F_1]\simI \fR_\Phi[F_2]$.
\end{theorem}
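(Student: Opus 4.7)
My plan is to handle the theorem's two assertions separately.

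\emph{Step 1: the norm conditions.} Evaluating the characterization of Theorem~\ref{T:SCprecDef} at $\la = 0$ in both directions of $\simI$, I first deduce that $F_1(0) \sim F_2(0)$ as Hilbert space contractions. Next I would establish an auxiliary lemma: for any contraction $C$ of matching shape, $A \sim B$ implies $CA \sim CB$. The proof uses Douglas factorization twice: the identities $D_{CB}^2 = D_B^2 + B^* D_C^2 B \ge D_B^2$ and $D_{(CB)^*}^2 = CD_{B^*}^2 C^* + D_{C^*}^2 \ge (CD_{B^*})(CD_{B^*})^*$ yield contractions $Z_1, Z_2$ with $D_B = Z_1 D_{CB}$ and $C D_{B^*} = D_{(CB)^*} Z_2$; substituting the witness $A - B = D_{B^*} X D_B$ gives $CA - CB = D_{(CB)^*}(Z_2 X Z_1) D_{CB}$, so $CA \prec CB$, and symmetrically $CB \prec CA$. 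Applying this with $C = \Phi_{11}(0)$ produces $\Phi_{11}(0) F_1(0) \sim \Phi_{11}(0) F_2(0)$; since the strict contractions form a single $\sim$-class (Lemma~\ref{L:ConesClasses}), the two norm conditions are equivalent.

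\emph{Step 2: equivalence of the Redheffer images.} Assuming both norm conditions, I would start from the resolvent-type identity
\[
\fR_\Phi[F_1](\la) - \fR_\Phi[F_2](\la) = \Phi_{21}(\la)(I - F_1(\la)\Phi_{11}(\la))^{-1}(F_1(\la) - F_2(\la))(I - \Phi_{11}(\la)F_2(\la))^{-1}\Phi_{12}(\la),
\]
valid pointwise on $\BD$. Substituting the witness $F_1 - F_2 = D_{F_2^*} Q D_{F_2}$ from $F_1 \precI F_2$ (Theorem~\ref{T:SCprecDef}) puts the difference into the form
\[
\bigl[\Phi_{21}(I - F_1\Phi_{11})^{-1} D_{F_2^*}\bigr]\, Q \,\bigl[D_{F_2}(I - \Phi_{11} F_2)^{-1}\Phi_{12}\bigr].
\]
The aim is to rewrite this as $D_{\fR_\Phi[F_2]^*} Q' D_{\fR_\Phi[F_2]}$ with $Q'$ bounded on $\BD$. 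This is done by transporting the pointwise Shmul'yan Theorem~\ref{T:Shmul'yan} together with the explicit defect factorizations in its proof: at each $\la$, the outer brackets factor through $D_{\fR_\Phi[F_2](\la)^*}$ on the left and $D_{\fR_\Phi[F_2](\la)}$ on the right, with bounded Douglas residues that combine with $Q(\la)$ to form $Q'(\la)$. The reverse inequality $\fR_\Phi[F_2] \precI \fR_\Phi[F_1]$ follows symmetrically from the companion witness of $F_2 \precI F_1$, yielding $\fR_\Phi[F_1] \simI \fR_\Phi[F_2]$.

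\emph{Main obstacle.} The hardest point is to guarantee that $Q'$ is uniformly bounded on $\BD$, not merely defined pointwise. While $(I - F_i(\la)\Phi_{11}(\la))^{-1}$ is defined throughout $\BD$ under our hypothesis, its norm may blow up as $\la$ approaches $\partial\BD$. The resolution must exploit the specific defect-operator identities built into the Redheffer structure of the contractive coefficient $\Phi$: these identities show that the multiplications by $D_{F_2^*}$ and $D_{F_2}$ absorb the unboundedness of the inverses into factors of $D_{\fR_\Phi[F_2]^*}$ and $D_{\fR_\Phi[F_2]}$ respectively, leaving bounded residues. It is presumably here that the asymmetry between $F_1$ and $F_2$ in the resolvent identity is handled, possibly by invoking the companion witness from $F_2 \precI F_1$ or by a symmetric reformulation of the resolvent identity that allows a consistent use of $F_2$ in the critical factor.
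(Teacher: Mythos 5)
Your Step 1 is correct: reducing to the point $\la=0$ via Theorem~\ref{T:SCprecDef}, proving $A\sim B\Rightarrow CA\sim CB$ through the two Douglas factorizations $D_B=Z_1D_{CB}$ and $CD_{B^*}=D_{(CB)^*}Z_2$, and then invoking Lemma~\ref{L:ConesClasses} is a valid (and arguably more economical) variant of the paper's argument, which instead works with the functions themselves via Corollary~\ref{C:directobs2}(i) and Proposition~\ref{P:limbehavior}(iii) applied to a constant curve.

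The gap is in Step 2, and it is precisely the obstacle you flag without resolving. Substituting the one-sided witness $F_1-F_2=D_{F_2^*}QD_{F_2}$ into the resolvent identity of Lemma~\ref{L:EssComp} produces the left factor $\Phi_{21}(I-F_1\Phi_{11})^{-1}D_{F_2^*}$, which mixes the $F_1$-resolvent with the defect of $F_2$; none of the defect inequalities behind Corollary~\ref{C:confuncts} control this combination (they control $\Phi_{21}(I-K\Phi_{11})^{-1}D_{K^*}$ only for a single $K$). Switching to the companion witness $F_1-F_2=-D_{F_1^*}Q'D_{F_1}$ merely moves the mismatch to the right factor $D_{F_1}(I-\Phi_{11}F_2)^{-1}\Phi_{12}$, so neither one-sided witness works. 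The missing idea is the mixed-defect characterization of the equivalence relation, condition (ERi) of Theorem~\ref{T:eqrel} (Theorem~\ref{T:eqrelSC} in function form): $F_1\simI F_2$ yields a bounded function $\wtilQ$ on $\BD$ with $F_1-F_2=D_{F_1^*}\wtilQ D_{F_2}$, the left defect belonging to $F_1$ and the right to $F_2$. This matches the asymmetry of the resolvent identity exactly: with the contraction-valued functions of Corollary~\ref{C:confuncts} satisfying $\Phi_{21}(I-F_1\Phi_{11})^{-1}D_{F_1^*}=D_{\fR_\Phi[F_1]^*}L_*$ and $D_{F_2}(I-\Phi_{11}F_2)^{-1}\Phi_{12}=LD_{\fR_\Phi[F_2]}$ one gets
\[
\fR_\Phi[F_1]-\fR_\Phi[F_2]=D_{\fR_\Phi[F_1]^*}\,\bigl(L_*\wtilQ L\bigr)\,D_{\fR_\Phi[F_2]},
\]
with $\|L_*\wtilQ L\|_\infty\leq\|\wtilQ\|_\infty<\infty$. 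This is again condition (ERi) for the images and yields $\fR_\Phi[F_1]\simI\fR_\Phi[F_2]$ in one stroke: no bound on $(I-F_i\Phi_{11})^{-1}$ is ever required, and the two pre-order directions are not handled separately. Without Theorem~\ref{T:eqrel} (or an equivalent construction of the mixed witness from the two one-sided ones via a Douglas factor $N'_*$ with $D_{F_2^*}=N'_*D_{F_1^*}$), your Step 2 does not close.
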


Hence $\fR_\Phi$ preserves the equivalence relation $\simI$. On may now wonder if $\fR_\Phi$ also preserves the pre-order $\precI$. This is in general not the case. An example given by Bakonyi in \cite{B95} proves this. Indeed, the example of \cite{B95} deals with a specific suboptimal scalar Nehari interpolation problem. By the general theory of metric constrained interpolation, cf. \cite{FF90}, it is known that the solutions set of such a problem is given by the range of a Redheffer map $\fR_\Phi$, of the type considered in the present paper, and that there must be a solution $\fR_\Phi[F]$ with $\|\fR_\Phi[F]\|_\infty<1$. Write $\un{0}$ for the constant function whose value is the zero operator. It is shown in \cite{B95} that $\|\fR_\Phi[\un{0}]\|_\infty=1$. Clearly $\un{0}$ is a strict Schur class function. It then follows by the harmonic maximum principle due to A. Biswas \cite{B97} that $F$ cannot be a strict Schur class function. Hence we have $F\prec\un{0}$ and $\fR_\Phi[\un{0}]\precI \fR_\Phi[F]$, and $F$ and $\un{0}$ nor $\fR_\Phi[F]$ and $\fR_\Phi[\un{0}]$ belong to the same equivalence class. See Example 5.6 and Remark 5.11 in \cite{AD08} for a discussion of larger class of examples, which includes the one of \cite{B95}, where this phenomenon occurs. Example \ref{E:nonprecinv} below presents another example where $\fR_\Phi$ does not preserve $\precI$ with $\Phi$ an operator polynomial of degree one.

Note that one can view $\fR_\Phi$ in the context of the Redheffer maps $\fR_M$ of \cite{S80} by taking $M=T_\Phi$, i.e., $M_{ij}=T_{\Phi_{ij}}$, and $A=T_F$. However, the condition $\|\Phi_{11}(0)F(0)\|<1$ does not imply that $I-T_{\Phi_{11}}T_F$ is (boundedly) invertible, and hence $\fR_\Phi$ acts on a larger domain than $\fR_{T_\Phi}$. This observation yields the following result, which can both be viewed as a generalization of Theorem \ref{T:Shmul'yan}, with $\Phi$ and $F$ in \eqref{IntroRed} constant, and a specification of Theorem \ref{T:Shmul'yan}, with $M=T_\Phi$ and $A=T_F$.

\begin{theorem}\label{T:RedInvar2}
Let $\Phi \in S(\cE'\oplus \cU,\cE\oplus\cY)$ decompose as in \eqref{PhiDecom} and $F,G\in S(\cE,\cE')$ such that $F\precI G$. Assume that the function $\|\Phi_{11}(0)F(0)\|<1$ is boundedly invertible on $\BD$. Then $\fR_\Phi[F]\precI \fR_\Phi[G]$. In particular, if $\Phi_{11}$ is a strict Schur class function, then $\fR_\Phi$ maps $S(\cE,\cE')$ into $S(\cU,\cY)$ and preserves the pre-order $\precI$.
\end{theorem}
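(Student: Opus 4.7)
The strategy is to lift the statement to the level of analytic Toeplitz operators, apply Shmul'yan's Theorem~\ref{T:Shmul'yan} there, and translate the conclusion back via Theorem~\ref{T:SCprecDef}. Since $\Phi \in S(\cE'\oplus\cU, \cE\oplus\cY)$, the block analytic Toeplitz operator $T_\Phi=[T_{\Phi_{ij}}]$ is a contraction from $H^2_{\cE'\oplus\cU}$ to $H^2_{\cE\oplus\cY}$, and by definition $F\precI G$ is $T_F\prec T_G$. By multiplicativity of analytic Toeplitz operators ($T_{AB}=T_A T_B$), the assumption that $(I-\Phi_{11}F)^{-1}$ is bounded analytic on $\BD$ is equivalent to $I-T_{\Phi_{11}}T_F=T_{I-\Phi_{11}F}$ being boundedly invertible on $H^2_\cE$, with inverse $T_{(I-\Phi_{11}F)^{-1}}$; more generally, one has the identification $\fR_{T_\Phi}[T_H]=T_{\fR_\Phi[H]}$ whenever $(I-\Phi_{11}H)^{-1}$ lies in $H^\infty$. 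Granted the analogous bounded invertibility of $I-T_{\Phi_{11}}T_G$, Shmul'yan's theorem applied with $M=T_\Phi$, $A=T_F$, $B=T_G$ immediately yields $\fR_{T_\Phi}[T_F]\prec\fR_{T_\Phi}[T_G]$, which via these identifications reads $T_{\fR_\Phi[F]}\prec T_{\fR_\Phi[G]}$, i.e.\ $\fR_\Phi[F]\precI\fR_\Phi[G]$.

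The hard part is to establish that $I-T_{\Phi_{11}}T_G$ is also boundedly invertible, since the hypothesis names only $F$. Using Theorem~\ref{T:SCprecDef} one writes $F-G=D_{G^*}QD_G$ with $Q$ bounded on $\BD$; the pointwise factorization
\[
I-\Phi_{11}G \;=\; (I-\Phi_{11}F)\bigl[\,I + (I-\Phi_{11}F)^{-1}\Phi_{11}D_{G^*}QD_G\,\bigr]
\]
reduces the problem to invertibility of the bracketed factor on $\cE$ at each $\la\in\BD$. A short defect-space chase handles injectivity of this factor and of its adjoint: a kernel vector $v$ forces $D_G v=0$, whence $(F-G)v=0$ by the Theorem~\ref{T:SCprecDef} factorization, so the identity collapses to $(I-\Phi_{11}F)v=0$, which gives $v=0$ by hypothesis. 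Promoting injectivity plus dense range to genuine bounded invertibility in infinite dimensions is the delicate step; I would attempt it either by extracting a uniform closed-range estimate from the same identity, or by reading the theorem as implicitly requiring $(I-\Phi_{11}G)^{-1}\in H^\infty$ (which must hold for $\fR_\Phi[G]$ to be a Schur class function in any case, hence for the conclusion $\fR_\Phi[F]\precI\fR_\Phi[G]$ to be meaningful).

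The ``in particular'' clause bypasses this obstacle. If $\Phi_{11}$ is a strict Schur class function then $\|T_{\Phi_{11}}\|=\|\Phi_{11}\|_\infty<1$, so $\|T_{\Phi_{11}}T_H\|<1$ for every $H\in S(\cE,\cE')$, and $I-T_{\Phi_{11}}T_H$ is boundedly invertible by Neumann series. Hence $\fR_\Phi$ is defined on all of $S(\cE,\cE')$ and, as noted in the introduction, maps it into $S(\cU,\cY)$; the Toeplitz-level Shmul'yan argument above then applies to every pair $F,G$ with $F\precI G$, giving $\fR_\Phi[F]\precI\fR_\Phi[G]$ without further assumptions.
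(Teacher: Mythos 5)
Your route---lifting to analytic Toeplitz operators and invoking Shmul'yan's Theorem~\ref{T:Shmul'yan} with $M=T_\Phi$, $A=T_F$, $B=T_G$---is genuinely different from the paper's, but it stands or falls on the bounded invertibility of \emph{both} $I-T_{\Phi_{11}}T_F$ and $I-T_{\Phi_{11}}T_G$, and the second of these is exactly the point you leave open. Your ``defect-space chase'' only gives pointwise injectivity of $I-\Phi_{11}(\la)G(\la)$ (which is automatic once $\|\Phi_{11}(0)G(0)\|<1$); what is needed is a bound on the inverses uniform in $\la$, which is what invertibility of $T_{I-\Phi_{11}G}$ amounts to. Neither of your two escapes closes this. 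The ``uniform closed-range estimate'' is not carried out; it can in fact be extracted, e.g.\ by combining $\|(I-\Phi_{11}(\la)G(\la))v\|\geq \|v\|/C-\|Q\|_\infty\|D_{G(\la)}v\|$ (from $I-\Phi_{11}G=(I-\Phi_{11}F)+\Phi_{11}D_{G^*}QD_G$, with $C=\|(I-\Phi_{11}F)^{-1}\|_\infty$) with $\|(I-\Phi_{11}(\la)G(\la))v\|\geq \half\|D_{G(\la)}v\|^2/\|v\|$ and a case distinction on $\|D_{G(\la)}v\|$, plus the analogous adjoint estimate---but that is a genuine argument your proposal does not contain. The second escape rests on a false premise: $\fR_\Phi[G]$ is a Schur class function under the sole condition $\|\Phi_{11}(0)G(0)\|<1$, so $(I-\Phi_{11}G)^{-1}\in H^\infty$ is \emph{not} implicit in the conclusion being meaningful, and reading it into the hypotheses would weaken the theorem. (You also never verify that $\fR_\Phi[G]$ is defined at all, i.e.\ that $\|\Phi_{11}(0)G(0)\|<1$; this does follow from $F\precI G$ via Corollary~\ref{C:directobs2} and Proposition~\ref{P:limbehavior}, but it needs saying.)

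For contrast, the paper never leaves the function level and never needs $(I-\Phi_{11}G)^{-1}$ to be bounded: it writes $\fR_\Phi[F]-\fR_\Phi[G]=\Phi_{21}(I-F\Phi_{11})^{-1}D_{G^*}Q\,L\,D_{\fR_\Phi[G]}$ using the difference formula of Lemma~\ref{L:EssComp} together with the \emph{contraction-valued} function $L$ of Corollary~\ref{C:confuncts} (a pointwise Douglas factorization of the inequalities \eqref{Ineqs}, valid with no uniform control of $(I-\Phi_{11}G)^{-1}$), and then absorbs the remaining left factor into $D_{\fR_\Phi[G]^*}$ times a bounded function using only the hypothesis on $(I-F\Phi_{11})^{-1}$. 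Your treatment of the ``in particular'' clause, where $\|\Phi_{11}\|_\infty<1$ makes both Neumann series converge and the Toeplitz-level argument applies verbatim, is correct; it is the proof of the general statement that is incomplete.
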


This result is proved in Section \ref{S:Redheffer}, where we also consider how the behavior of $\fR_\Phi$ with respect to $\precI$ improves even further when $\Phi$ meets additional constraints. In particular, we prove, in Proposition \ref{P:InjectClass}, that $\fR_\Phi$ induces an injective map on the equivalence classes of $S(\cE,\cE')$ in case $\Phi_{12}$ or $\Phi_{21}$ is invertible on $\BD$ with a bounded analytic inverse.

Besides the current introduction, the paper consists of three sections. In Section \ref{S:BallPreOrder} we discuss various definitions and implications of the pre-order $\prec$ and equivalence relation $\sim$ of \cite{S80} and derive relations between the parameters in the different definitions. The specification of $\prec$ and $\sim$ to contractive Toeplitz and Laurent operators, both analytic and nonanalytic, leading to the definition of the pre-order $\precI$ and equivalence relation $\simI$, is the topic of Section \ref{S:LaurentPreOrder}. The properties of Redheffer maps $\fR_\Phi$ in connection with $\precI$ and $\simI$ are investigated in the final section.

We conclude this introduction with some words on notation and terminology. Throughout capital calligraphic letters denote Hilbert spaces (with the exception of $\cL$). With operator and subspace we mean a bounded linear map and a closed linear manifold. In particular, in this paper all operators are bounded and all subspaces closed. Given Hilbert spaces $\cH_1$ and $\cH_2$ we write $\cL(\cH_1,\cH_2)$ for the space of operators from $\cH_1$ to $\cH_2$. We further write $\cL_1(\cH_1,\cH_2)$ and $\cL_1^\circ(\cH_1,\cH_2)$ for the closed, respectively open, unit balls in $\cL(\cH_1,\cH_2)$, i.e., the sets of contractions, respectively strict contractions. In case $\cH_1=\cH_2$, we simply write $\cL(\cH_1)$, $\cL_1(\cH_1)$ and $\cL_1^\circ(\cH_1)$ instead of $\cL(\cH_1,\cH_1)$, $\cL_1(\cH_1,\cH_1)$ and $\cL_1^\circ(\cH_1,\cH_1)$, respectively.

Given $C\in\cL_1(\cH_1,\cH_2)$ we denote the {\em defect operator} and {\em defect space} associated with $C$ by $D_C$, respectively $\cD_C$, that is $D_C$ is the positive square root of $I_{\cH_1}-C^*C$ and $\cD_C$ is the closure of the range of $D_C$. With some abuse of notation, we will view the defect operator $D_C$ either as an operator on $\cH_1$, on $\cD_C$, from $\cH_1$ into $\cD_C$ or from $\cD_C$ into $\cH_1$, always using the symbol $D_C$. The precise meaning will be clear from the context, or otherwise be made explicit. For an operator $A\in\cL(\cH_1,\cH_2)$ the real and imaginary part of $A$ are the self-adjoint operators defined by $\re(A)=\half (A+A^*)$ and $\im(A)=\frac{1}{2i}(A-A^*)$, respectively. The kernel of $A$ is denoted by $\kr A$ and range of $A$ by $\ran A$; the symbol $\oran A$ stands for the closure of $\ran A$. If $A$ is invertible, then $A^{-*}$ is short for $(A^*)^{-1}$, or $(A^{-1})^*$ which amounts to the same operator. Let $B\in\cL(\cH_1)$ be a self-adjoint operator. Then $B\geq 0$ indicates that $B$ is positive semi-definite, i.e., $\inn{Bh}{h}\geq 0$ for each $h\in\cH_1$. An invertible positive semi-definite operator $B$ is called positive definite, and this is indicated by $B>0$. If $C\in\cL(\cH_1)$ is also self-adjoint, then $B\geq C$ (resp.\ $B>C$) is short for $B-C\geq 0$ (resp.\ $B-C>0$). The orthogonal projection on a subspace $\cM$ of $\cH_1$ is denoted by $P_{\cM}$.

Let $\cU$ and $\cY$ denote separable Hilbert spaces. The Banach space of Lebesgue measurable, essentially bounded $\cL(\cU,\cY)$-valued functions on the unit circle $\BT$, together with the essential supremum norm $\|\ \|_\infty$, will be denoted by $L^\infty(\cU,\cY)$.  We write $H^\infty(\cU,\cY)$ for the Banach space of bounded analytic $\cL(\cU,\cY)$-valued functions on the open unit disc $\BD$, with the supremum norm on $\BD$, also denoted by $\|\ \|_\infty$. Note that $H^\infty(\cU,\cY)$ can be viewed as a sub-Banach space $L^\infty(\cU,\cY)$ by taking nontangential limits to a.e.\ point on $\BT$. The closed unit ball of $H^\infty(\cU,\cY)$ is the Schur class $S(\cU,\cY)$. We write $S_0(\cU,\cY)$ for the open unit ball of $H^\infty(\cU,\cY)$, i.e., the set of strict Schur class functions. With  $L^\infty_1(\cU,\cY)$ and $L^\infty_{<1}(\cU,\cY)$ we indicate the closed, respectively open, unit ball of $L^\infty(\cU,\cY)$.

With $K\in L^\infty(\cU,\cY)$ we associate it's Laurent operator $L_K$ mapping $L^2_\cU$ into $L^2_\cY$ and it's Toeplitz operator $T_K$ from $H^2_\cU$ to $H^2_\cY$, which are the operators defined by
\[
(L_K g)(e^{it})=K(e^{it}) g(e^{it})\quad (a.e.\ t\in [0,2\pi)),\quad
T_K=P_{H^2_\cY} L_K|_{H^2_\cU}.
\]
Note that this definition of $T_K$ is equivalent to the one given earlier in this introduction.
It is well known that $\|K\|_\infty=\|L_K\|=\|T_K\|$. A function $F\in S(\cU,\cY)$ is called inner if the nontangential limits of $F$ on $\BT$ are a.e.\ isometries, $*$-inner if the nontangential limits are a.e.\ co-isometries, and two-sided inner if it is both inner and $*$-inner. Note that $F$ is inner if and only if $T_F$ is an isometries, or equivalently $L_F$ is an isometry.

\section{A pre-order on contractive Hilbert space operators}
\setcounter{equation}{0}\label{S:BallPreOrder}

Throughout this section $\cH_1$ and $\cH_2$ are Hilbert spaces.
In \cite{S80} Yu.L. Shmul'yan considered the pre-order relation
on the set of contractions $\cL_1(\cH_1,\cH_2)$ defined in the
following theorem.

\begin{theorem}\label{T:prec}
Let $A,B\in\cL_1(\cH_1,\cH_2)$.
\begin{itemize}
\item[(i)]
The relation $A  \prec B$ defined by one of the
following four equivalent conditions:
\begin{itemize}
\item[(POi)]  $A-B= D_{B^*}X D_B$  for some $X\in\cL(\cD_{B},\cD_{B^*})$;

\item[(POii)] $I-A^*B= D_{B}Y D_B$ for some $Y\in\cL(\cD_B)$;

\item[(POiii)] there exists an $r>0$ such that
$(1-\vep)B+\vep A\in\cL_1(\cH_1,\cH_2)$
for all $\vep\in\BC$ with $|\vep|\leq r$;

\item[(POiv)] there exists an $r>0$ such that
$(1-\vep)B+\vep A\in\cL_1(\cH_1,\cH_2)$
for all $\vep\in\BC$ with $|\vep|=r$;
\end{itemize}
defines a pre-order relation on $\cL_1(\cH_1,\cH_2)$.

\item[(ii)] Assume $A  \prec B$ and let $Y\in\cL(\cD_B)$ be as in (POii). Then
\[
2\,\re (Y)\geq I \ands \kr (2\re(Y)-I)=\{0\}.
\]

\item[(iii)] Assume $A  \prec B$. Then the operators $X$ and $Y$
and the positive constant $r$ from (POi)--(POiv)
can be chosen in such a way that the following relations hold:
\begin{align}
\label{XvsY}
& \|X\|\leq \|Y\|+\sqrt{2\|\re(Y)\|-1},\quad \|Y\|\leq 1+\|X\|,\\
\label{XnYineq}
&\|X\|\leq \frac{2+2\sqrt{r}+r}{2r},\quad \|Y\|\leq \frac{2+r}{2r},\\
\label{rineq}
&  r
\geq \frac{1}{\|Y\|+\sqrt{\|Y\|^2+2\|\re(Y)\|-1}}.
\end{align}
\end{itemize}
\end{theorem}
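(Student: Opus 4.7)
}

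My plan is to establish the three parts in order, starting with the equivalences in (i), then the pre-order axioms, then (ii) and (iii). The backbone of all three parts is the standard defect intertwining $B D_B = D_{B^*} B$ and the symmetrized identity
\[
D_B(Y+Y^*-I) D_B \;=\; D_A^2 + (A-B)^*(A-B),
\]
which falls out of (POii) by computing $(I-A^*B)+(I-B^*A)$ and simplifying $2I - A^*B - B^*A$ as $D_A^2+D_B^2+(A-B)^*(A-B)$.

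For the equivalences I would proceed cyclically. First, (POi) $\Rightarrow$ (POii) by a one-line substitution: if $A-B=D_{B^*}XD_B$, then $I-A^*B = D_B^2 - D_BX^*D_{B^*}B = D_B(I_{\cD_B}-X^*B|_{\cD_B})D_B$, giving $Y=I_{\cD_B}-X^*B|_{\cD_B}$. Next, (POii) $\Rightarrow$ (POi): the key preliminary is that $A-B$ kills $\ker D_B$ (apply (POii) to $\langle\cdot g,g\rangle$ for $g\in\ker D_B$ and saturate Cauchy--Schwarz to get $Ag=Bg$), combined with the bound $\|(A-B)g\|^2\leq \|2\re Y-I\|\,\|D_Bg\|^2$ read off the boxed identity above; Douglas' lemma produces $X_0\in\cL(\cD_B,\cH_2)$ with $A-B=X_0 D_B$, and a symmetric argument on $(A-B)^*$ then lets one further factor $X_0 = D_{B^*}X$ with $X\in\cL(\cD_B,\cD_{B^*})$. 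The equivalence (POiii) $\Leftrightarrow$ (POiv) is immediate one way and, the other way, follows from the fact that $\varepsilon \mapsto \|((1-\varepsilon)B+\varepsilon A)h\|^2$ equals a harmonic function plus $|\varepsilon|^2\|(A-B)h\|^2$, hence is subharmonic, so the maximum principle transports the contractivity inward. Finally, (POi) $\Rightarrow$ (POiv) follows from direct expansion of $I-C(\varepsilon)^*C(\varepsilon)$ with $C(\varepsilon)=B+\varepsilon(A-B)$ and positivity for small $|\varepsilon|$ using $\|X\|$. The reverse (POiv) $\Rightarrow$ (POi) I would prove by averaging: integrating $\|C(\varepsilon)h\|^2\leq\|h\|^2$ over $|\varepsilon|=r$ kills the cross terms and yields $r^2(A-B)^*(A-B)\leq D_B^2$ (and the dual $r^2(A-B)(A-B)^*\leq D_{B^*}^2$), from which the Douglas factorizations of the previous paragraph now also produce the bound $r\geq(\|Y\|+\sqrt{\|Y\|^2+2\|\re(Y)\|-1})^{-1}$. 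Reflexivity is trivial ($X=0$), and transitivity I would obtain via (POiv) using the two radii from $A\prec B$ and $B\prec C$ together with convexity of the set of contractions.

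For part (ii), the boxed identity immediately gives $2\re Y \geq I$ on $\ran D_B$, hence on $\cD_B$ by density. For the kernel claim, the same identity shows $\ker(D_B(2\re Y-I)D_B)=\ker D_A\cap\ker(A-B)$, and on this set $D_Ag=0$ together with $Ag=Bg$ forces $\|Bg\|=\|g\|$, i.e.\ $D_Bg=0$; thus $\ker(2\re Y-I)\cap\ran D_B=\{0\}$. I expect this step to be the main obstacle: passing from $\ran D_B$ to its closure $\cD_B$ is not automatic for a positive operator, and will require either a compactness or limiting argument using an approximating sequence $D_B g_n\to h\in\ker(2\re Y-I)$, exploiting that both $D_A g_n\to 0$ and $(A-B)g_n\to 0$ in order to extract $\|B g_n\|^2\to \|g_n\|^2$ and hence $D_B g_n\to 0$, contradicting $h\neq 0$.

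For part (iii), the quantitative inequalities come from tracking constants through the above constructions. The inequality $\|Y\|\leq 1+\|X\|$ is visible from $Y=I-X^*B|_{\cD_B}$; the reverse estimate $\|X\|\leq\|Y\|+\sqrt{2\|\re(Y)\|-1}$ combines the Douglas bound $\|X\|^2\leq \|2\re Y-I\|=2\|\re Y\|-1$ (which is valid after minimizing over the admissible $X$'s by subtracting the part of $X$ already accounted for by $I-Y$) with the direct relation between $X$ and $Y$. The bounds on $r$ in \eqref{XnYineq} and \eqref{rineq} come from positivity of $I-C(\varepsilon)^*C(\varepsilon)$ obtained by expanding with $Y$ and optimizing in $|\varepsilon|$; these are essentially scalar polynomial inequalities once the operator-level positivity is reduced via the boxed identity.
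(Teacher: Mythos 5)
The overall architecture of your plan (the identity $2\re(I-A^*B)=D_A^2+D_B^2+(A-B)^*(A-B)$, the substitution $Y=I-X^*B$ for (POi) $\Rightarrow$ (POii), the averaging over $|\vep|=r$, and convexity for transitivity) matches the paper's, but there is one step where your argument would fail as written: the passage from (POii) (or from the averaged inequalities coming out of (POiv)) back to the two-sided factorization (POi). You propose to first obtain $A-B=X_0D_B$ by Douglas' lemma and then ``by a symmetric argument on $(A-B)^*$'' to further factor $X_0=D_{B^*}X$. Two things go wrong. First, (POii) is a statement about $D_B$ only, so it gives no direct control of $(A-B)(A-B)^*$ by $D_{B^*}^2$; the adjoint-side information only becomes available after one knows $A\prec B\Rightarrow A^*\prec B^*$, which in the paper is extracted from (POiii)/(POiv) -- using it inside the proof of (POii) $\Rightarrow$ (POi) would make your cycle circular. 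Second, and more seriously, even granting both one-sided estimates $(A-B)^*(A-B)\leq cD_B^2$ and $(A-B)(A-B)^*\leq cD_{B^*}^2$ (which your averaging over $|\vep|=r$ does deliver from (POiv)), two one-sided Douglas factorizations $A-B=X_0D_B=D_{B^*}\wtilX_0$ do \emph{not} combine into $A-B=D_{B^*}XD_B$: to factor $X_0$ through $D_{B^*}$ you need $X_0X_0^*\leq cD_{B^*}^2$, whereas what you have is only $X_0D_B^2X_0^*\leq cD_{B^*}^2$, which is strictly weaker (take $T=D$ with $D$ positive, injective, non-invertible: then $T=ID=DI$, yet $T=DXD$ forces the unbounded $X=D^{-1}$). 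The paper closes exactly this gap by a non-obvious algebraic device: it writes $A-B=D_{B^*}^2A-B(I-B^*A)$, uses the intertwining $BD_B=D_{B^*}B$ and the already established $I-B^*A=D_BY^*D_B$ to see that \emph{both} summands carry a left factor $D_{B^*}$ and a right factor $D_B$, arriving at the explicit $X=D_{B^*}M+B(I-Y^*)$ with $MD_B=A-B$. Some version of this identity (or an equivalent trick) is indispensable; your proposal is missing it.

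A secondary point: in part (ii) you correctly identify the kernel claim as delicate, but your proposed fix does not quite close it. If $y\in\cD_B\setminus\ran D_B$ and $D_Bu_n\to y$, the sequence $(u_n)$ is in general unbounded, and the estimate you would need, $\|D_Bu_n\|^2\leq\|D_Au_n\|^2+2\|(A-B)u_n\|\,\|u_n\|$, then fails to force $\|D_Bu_n\|\to 0$ because of the factor $\|u_n\|$. The paper argues on $\ran D_B$ directly (showing $D_B(2\re(Y)-I)D_Bx=0$ forces $D_Ax=0$ and $Ax=Bx$, hence $x\in\cD_B\cap\kr D_B=\{0\}$); if you want injectivity of $2\re(Y)-I$ on all of $\cD_B$ rather than on the dense submanifold $D_B(\cD_B)$, an additional argument is needed in either treatment, but your limiting scheme as described does not supply it. The remaining ingredients of your plan -- the subharmonicity argument for (POiv) $\Rightarrow$ (POiii) in place of the paper's midpoint-convexity argument, and the convexity proof of transitivity in place of the paper's composition of the factorizations $D_B=ND_C$, $D_{B^*}=N_*D_{C^*}$ -- are legitimate alternatives, but they do not repair the central gap above.
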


Shmul'yan \cite{S80} initially studied this pre-order in the form of (POi). In a later paper by Khatskevich, Shmul'yan and Shul'man \cite{KSS91} it was shown that the formulation of $\prec$ in (POi) is equivalent to those in (POii) and (POiii), by extending ideas from real convex analysis to the complex numbers. The paper \cite{KSS91} also discusses the relation between the pre-order of Theorem \ref{T:prec}, the Harnack pre-order from \cite{iS72} and a pre-order defined by Ceausescu in \cite{C76}.

Statements (ii) and (iii) of Theorem \ref{T:prec} are additions to the results of \cite{KSS91}. The relations between the constants appearing in (POi)--(POiv) from statement (iii) are necessary for our specification to contractive analytic Laurent operators in Section \ref{S:LaurentPreOrder} in terms of their symbols, and require a refinement of many of the arguments used in \cite{KSS91}.

Before proving Theorem \ref{T:prec} we first derive a few preliminary results. We start by mentioning the following easily verified identity, which holds for any $A,B\in\cL_1(\cH_1,\cH_2)$, independent of the pre-order relation:
\begin{equation}\label{re(I-A^*B)identity}
2\re(I-A^*B)= D_A^2+D_B^2+(A-B)^*(A-B).
\end{equation}

The following lemma provides a useful reformulation of
conditions (POiii) and (POiv).

\begin{lemma}\label{L:geo}
Let $A$ and $B$ be contractions in $\cL_1(\cH_1,\cH_2)$ and $\vep\in\BC$.
Then $(1-\vep)B+\vep A\in\cL_1(\cH_1,\cH_2)$ holds if and only if
\begin{equation}\label{poscon}
\begin{array}{c}
0\leq(1-2\re (\vep)) D_B^2+2\re(\vep)\, \re(I-A^*B)+\\[.2cm]
\mbox{ }\qquad\qquad\qquad+2\im (\vep)\, \im(I-A^*B)-|\vep|^2(A-B)^*(A-B).
\end{array}
\end{equation}
\end{lemma}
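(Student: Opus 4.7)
The plan is to verify the equivalence by direct computation: the condition $(1-\vep)B+\vep A\in\cL_1(\cH_1,\cH_2)$ is simply $\|(1-\vep)B+\vep A\|\le 1$, which is equivalent to the positivity of $I - C^*C$ where $C:=(1-\vep)B+\vep A$. So I would just expand $I-C^*C$ and show it equals the right-hand side of \eqref{poscon}.

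First I would write $C = B + \vep(A-B)$ and expand:
\[
I-C^*C = I - B^*B - \bar\vep(A-B)^*B - \vep B^*(A-B) - |\vep|^2(A-B)^*(A-B).
\]
The first two terms give $D_B^2$, and the last is the $|\vep|^2$-term appearing in \eqref{poscon} (with the correct sign). The middle two terms need to be rearranged into the required real/imaginary combination. Setting $x=\re(\vep)$, $y=\im(\vep)$, I would compute
\[
\bar\vep A^*B+\vep B^*A = x(A^*B+B^*A) + iy(B^*A-A^*B) = 2xI - 2x\,\re(I-A^*B) - 2y\,\im(I-A^*B),
\]
where the second equality uses the identities $A^*B+B^*A = 2I - 2\re(I-A^*B)$ and $B^*A-A^*B = 2i\,\im(I-A^*B)$. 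Then the cross terms $-\bar\vep(A-B)^*B-\vep B^*(A-B)$ equal $-(\bar\vep A^*B+\vep B^*A)+2\re(\vep)B^*B$, and substituting $B^*B = I - D_B^2$ absorbs the stray $2xI$ contributions.

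After collecting, the $D_B^2$ contributions combine to $(1-2\re(\vep))D_B^2$, and what remains is exactly the right-hand side of \eqref{poscon}. Since $(1-\vep)B+\vep A$ is a contraction if and only if $I-C^*C\ge 0$, this identification finishes the proof in both directions simultaneously. There is no real obstacle here — the only thing to watch is the signs when converting $iy(B^*A-A^*B)$ to $-2y\,\im(I-A^*B)$, and keeping track of which $B^*B$ gets replaced by $I-D_B^2$; neither step requires any machinery beyond the definitions.
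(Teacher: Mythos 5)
Your proposal is correct and follows essentially the same route as the paper: both verify that $I-C^*C$ for $C=(1-\vep)B+\vep A$ equals the operator on the right-hand side of \eqref{poscon} by direct expansion, so contractivity is equivalent to positivity of that expression. The only (immaterial) difference is that you write $C=B+\vep(A-B)$ at the outset, which produces the $(A-B)^*(A-B)$ term immediately, whereas the paper expands in terms of $D_A^2$ and $D_B^2$ and invokes the identity \eqref{re(I-A^*B)identity} at the end to reach the same form.
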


\begin{proof}[\bf Proof.]
The result follows since for each $\vep\in\BC$ we have
\begin{align*}
&I-((1-\vep)B+\vep A)^*((1-\vep)B+\vep A)=\\
&\qquad=I-|1-\vep|^2 B^*B-|\vep|^2 A^*A-2\re((1-\vep)\ov{\vep}A^*B)\\
&\qquad=|1-\vep|^2 D_B^2+|\vep|^2 D_A^2+(1-|1-\vep|^2-|\vep|^2)I
-2\re((1-\vep)\ov{\vep}A^*B)\\
&\qquad=|1-\vep|^2 D_B^2+|\vep|^2 D_A^2-2(|\vep|^2-\re(\vep))I
-2\re(\ov{\vep}-|\vep|^2) \re(A^*B)+\\
&\qquad\qquad\qquad\qquad+2\im(\ov{\vep}-|\vep|^2) \im(A^*B)\\
&\qquad=|1-\vep|^2 D_B^2+|\vep|^2 D_A^2-2(|\vep|^2-\re(\vep))I
+2(|\vep|^2-\re(\vep))\re(A^*B)+\\
&\qquad\qquad\qquad\qquad-2\im(\vep)\im(A^*B)\\
&\qquad=|1-\vep|^2 D_B^2+|\vep|^2 D_A^2-2(|\vep|^2-\re(\vep))\re(I-A^*B)+\\
&\qquad\qquad\qquad\qquad+2\im(\vep)\im(I-A^*B)\\
&\qquad=(|1-\vep|^2 -|\vep|^2)D_B^2+2\re(\vep)\re(I-A^*B)
+2\im(\vep)\im(I-A^*B)+\\
&\qquad\qquad\qquad\qquad-|\vep|^2(A-B)^*(A-B)\\
&\qquad=(1-2\re(\vep))D_B^2+2\re(\vep)\re(I-A^*B)
+2\im(\vep)\im(I-A^*B)+\\
&\qquad\qquad\qquad\qquad-|\vep|^2(A-B)^*(A-B),
\end{align*}
using \eqref{re(I-A^*B)identity} in the last but one identity.
\end{proof}

The following lemma will be of use when analyzing
condition (POii).

\begin{lemma}\label{L:ReImIneq}
Let $C\in \cL(\cH_1)$ and $D\in\cL(\cH_1,\cH_2)$. Then there exists an
operator $Z\in\cL(\cH_2)$ with $C=D^* Z D$  if and only if there exist
$\de_1,\de_2\geq0$ such that
\[
-\de_1 D^*D \leq \re(C) \leq \de_1 D^*D\ands
-\de_2 D^*D \leq \im(C) \leq \de_2 D^*D.
\]
Moreover, if $C=D^* Z D$ for $Z\in\cL(\cH_2)$ then one can take $\|\re(Z)\|=\de_1$ and $\|\im(Z)\|=\de_2$.
\end{lemma}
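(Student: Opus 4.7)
The plan is to split $C$ into its real and imaginary parts and reduce the claim to a statement about self-adjoint operators. The easy implication, that $C = D^*ZD$ implies the two-sided inequalities, will be immediate: decomposing $Z = \re(Z) + i\,\im(Z)$ gives $\re(C) = D^* \re(Z) D$ and $\im(C) = D^* \im(Z) D$, and sandwiching the trivial inequalities $-\|\re(Z)\|I_{\cH_2} \leq \re(Z) \leq \|\re(Z)\|I_{\cH_2}$ (and the analogue for $\im(Z)$) between $D^*$ and $D$ yields the stated bounds with $\de_1 := \|\re(Z)\|$ and $\de_2 := \|\im(Z)\|$. This argument simultaneously delivers the ``moreover'' assertion.

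For the converse I would reduce to the following self-adjoint version: if $R \in \cL(\cH_1)$ is self-adjoint with $-\de D^*D \leq R \leq \de D^*D$ for some $\de \geq 0$, then there is a self-adjoint $W \in \cL(\cH_2)$ satisfying $\|W\| \leq \de$ and $R = D^*WD$. Applying this to $R = \re(C)$ with constant $\de_1$ and to $R = \im(C)$ with constant $\de_2$ produces self-adjoint $W_1, W_2 \in \cL(\cH_2)$, after which $Z := W_1 + iW_2$ satisfies $C = D^*ZD$ with $\re(Z) = W_1$ and $\im(Z) = W_2$. To build $W$ from $R$, I would define the sesquilinear form $\phi(Dh, Dk) := \inn{Rh}{k}$ on $\ran D \times \ran D$, extend it by continuity to $\oran D$, and invoke the Riesz representation theorem to obtain a self-adjoint $W_0 \in \cL(\oran D)$ with $\inn{W_0 u}{v} = \phi(u,v)$ and $\|W_0\|$ equal to the form norm; extending $W_0$ by zero on $(\oran D)^\perp$ yields the desired $W \in \cL(\cH_2)$.

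The main obstacle is the pointwise estimate
\[
|\inn{Rh}{k}| \leq \de \|Dh\| \|Dk\| \quad (h, k \in \cH_1),
\]
which simultaneously secures well-definedness of $\phi$ (since $Dh = 0$ then forces $\inn{Rh}{k} = 0$ for every $k$) and the bound $\|W_0\| \leq \de$. To establish it I would start from the decomposition $R = \tfrac{1}{2}[(\de D^*D + R) - (\de D^*D - R)]$, apply the Cauchy--Schwarz inequality to each of the positive semi-definite sesquilinear forms $\inn{(\de D^*D \pm R)\,\cdot\,}{\cdot}$, and collapse the resulting sum of square roots using the elementary inequality $\sqrt{ab} + \sqrt{cd} \leq \sqrt{(a+c)(b+d)}$ valid for $a,b,c,d \geq 0$. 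The $\pm R$ terms then cancel inside the square root, leaving $\tfrac{1}{2}\sqrt{2\de\|Dh\|^2 \cdot 2\de\|Dk\|^2} = \de\|Dh\|\|Dk\|$, as required.
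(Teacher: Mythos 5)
Your proof is correct, and the forward implication together with the ``moreover'' clause is handled exactly as the paper does it (decompose $Z$ into real and imaginary parts and sandwich $\pm\|\re(Z)\|I$ and $\pm\|\im(Z)\|I$ between $D^*$ and $D$). For the converse you follow the paper in first reducing to a self-adjoint factorization statement --- the paper isolates this as Lemma \ref{L:Basic} --- but you prove that statement by a genuinely different route. The paper splits the self-adjoint operator into positive and negative parts $U=U_+-U_-$ via the spectral theorem, squeezes $-\de D^*D\le -U_-\le U\le U_+\le \de D^*D$, and applies Douglas' lemma to factor $U_\pm^{1/2}$ through $D$, assembling $W=\de(V_+^*V_+-V_-^*V_-)$. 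You instead work with the sesquilinear form $\phi(Dh,Dk)=\inn{Rh}{k}$, establish the key estimate $|\inn{Rh}{k}|\le\de\|Dh\|\,\|Dk\|$ by Cauchy--Schwarz applied to the two positive forms $\inn{(\de D^*D\pm R)\,\cdot}{\cdot}$ combined with $\sqrt{ab}+\sqrt{cd}\le\sqrt{(a+c)(b+d)}$, and then represent the extended form by a self-adjoint operator on $\oran D$. Your route avoids both the spectral theorem and Douglas' lemma and is more self-contained; the paper's route is shorter in context because Douglas' lemma is already a standing tool there. Both arguments produce the same norm bound $\|W\|\le\de$. One cosmetic elision: you only note that $Dh=0$ forces $\inn{Rh}{k}=0$, which secures well-definedness of $\phi$ in its first slot; well-definedness in the second slot follows from the same estimate via the Hermitian symmetry $\phi(u,v)=\overline{\phi(v,u)}$ inherited from the self-adjointness of $R$, and is worth stating explicitly.
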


\begin{proof}[\bf Proof.]
The result follows by observing that $C=D^* Z D$ holds if and only if $\re (C)=D^* \re(Z)D$ and $\im (C)=D^* \im(Z)D$, and then applying the next lemma with $U=\re(C)$, respectively $U=\im(C)$.
\end{proof}

\begin{lemma}\label{L:Basic}
Let $U\in \cL(\cH_1)$ be selfadjoint and $D\in\cL(\cH_1,\cH_2)$. Then there exists an operator $W\in\cL(\cH_2)$ with $U=D^*WD$  if and only if there exists a $\de\geq0$ such that $-\de D^*D \leq U \leq \de D^*D$. Moreover, if $W\in\cL(\cH_2)$ with $U=D^*WD$, then one can take $\de=\|W\|$.
\end{lemma}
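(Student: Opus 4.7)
My plan is to handle the two directions of the biconditional separately, with the forward (``only if'') implication essentially immediate and the reverse carrying the real content. For ``only if'' together with the ``moreover'' clause, suppose $U = D^*WD$. Since $U$ is self-adjoint, taking adjoints gives $U = D^* W^* D$ as well, hence $U = D^*\re(W)D$. Because $\re(W)$ is self-adjoint, one has the scalar-type bound $-\|\re(W)\|\, I_{\cH_2} \le \re(W) \le \|\re(W)\|\,I_{\cH_2}$, and sandwiching on the left by $D^*$ and on the right by $D$ yields $-\|\re(W)\|\,D^*D \le U \le \|\re(W)\|\,D^*D$; since $\|\re(W)\|\le\|W\|$, the choice $\de = \|W\|$ works.

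For the ``if'' direction, assume $-\de D^*D \le U \le \de D^*D$. The key observation is that this pair of inequalities is equivalent to positive semi-definiteness of the $2\ts 2$ block operator
\[
T := \mat{cc}{\de D^*D & U \\ U & \de D^*D} \in \cL(\cH_1\oplus\cH_1).
\]
Positivity of $T$ follows from the explicit decomposition
\[
T = \half\mat{c}{I\\I}(\de D^*D + U)\mat{cc}{I & I} + \half\mat{c}{I\\-I}(\de D^*D - U)\mat{cc}{I & -I},
\]
in which both summands are manifestly positive since $\pm U \le \de D^*D$. Given $T\ge 0$, the standard Shmul'yan-type factorization lemma for positive $2\ts 2$ block operators with diagonals of the form $A^*A$ produces a contraction $K$ satisfying $U = (\sqrt\de D)^* K (\sqrt\de D) = \de D^* K D$. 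Setting $W := \de K$ gives $U = D^*WD$ as required, and as a bonus one obtains $\|W\|\le\de$.

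The main technical subtlety I anticipate is bookkeeping: the factorization lemma naturally produces $K$ as a contraction between closures of ranges of $(\de D^*D)^{1/2}$, so to land cleanly in $\cL(\cH_2)$ one has to pass through the polar decomposition $D = V|D|$ and extend by zero on $\kr D^*$. If that route feels heavy, an equivalent hands-on construction proceeds via the sesquilinear form $q(Dh,Dk) := \inn{Uk}{h}$ on $\ran D\ts\ran D$, where well-definedness reduces to showing $\kr D\subseteq\kr U$ (by Cauchy--Schwarz applied to the positive form $\de D^*D - U$), boundedness gives $|q(\cdot,\cdot)|\le 2\de\|\cdot\|\|\cdot\|$, and Riesz representation on $\oran D$ followed by extension by zero delivers the desired $W\in\cL(\cH_2)$.
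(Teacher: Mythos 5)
Your proof is correct, and the substantive direction takes a genuinely different route from the paper's. For the easy direction the paper splits $W$ (assumed selfadjoint w.l.o.g.) as $W_+-W_-$ and takes $\de=\max\{\|W_+\|,\|W_-\|\}=\|W\|$; your replacement of $W$ by $\re(W)$ and the sandwich $-\|\re(W)\|I\leq \re(W)\leq\|\re(W)\|I$ reaches the same conclusion a bit more directly. For the main implication the paper instead writes $U=U_+-U_-$ via the spectral theorem, asserts $-\de D^*D\leq -U_-\leq U\leq U_+\leq \de D^*D$, and applies Douglas' lemma to factor $U_\pm^{1/2}=\de^{1/2}V_\pm D$ with $V_\pm$ contractions, ending with $W=\de(V_+^*V_+-V_-^*V_-)$. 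You replace this by positivity of $T=\sbm{\de D^*D & U\\ U & \de D^*D}$ (your decomposition of $T$ into the two manifestly positive summands is correct), the standard contractive factorization $U=(\de D^*D)^{1/2}K(\de D^*D)^{1/2}$ of the off-diagonal entry, and the polar-decomposition bookkeeping to move $K$ from $\oran (D^*D)^{1/2}$ into $\cL(\cH_2)$ -- which is exactly the right fix. What your route buys is that you never need to dominate the positive and negative parts of $U$ \emph{separately} by $\de D^*D$ (the delicate point in the paper's chain, since $-C\leq U\leq C$ does not in general give control of $U_\pm$ by $C$); the price is invoking the block-factorization lemma, itself a consequence of Douglas' lemma. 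Your fallback via the form $q(Dh,Dk)=\inn{Uk}{h}$ is also sound and fully elementary: well-definedness does reduce to $\kr D\subset\kr U$, which follows by Cauchy--Schwarz for the positive forms $\de D^*D\mp U$, the bound $2\de$ on the form is what one gets, and since the ``moreover'' clause only concerns the other direction, the weaker norm bound on the resulting $W$ costs nothing.
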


\begin{proof}[\bf Proof.]
First assume $-\de D^*D \leq U \leq \de D^*D$ for some $\de\geq0$. By the spectral theorem for selfadjoint operators, see e.g.\ Chapter V in \cite{GGK90}, we can write $U=U_+-U_-$ with $U_+\geq 0$ and $U_-\geq 0$. Then
\[
-\de D^*D\leq -U_- \leq U \leq U_+\leq \de D^*D.
\]
By Douglas' lemma \cite{D66} we find that there exist contractions $V_+,V_-\in\cL(\cH_2,\cH_1)$ with $U_+^\half=\de^\half V_+D$ and $U_-^\half=\de^\half V_-D$. This yields $U_+=U_+^{*\half}U_+^\half=\de D^* V_+^*V_+ D$ and, similarly, $U_-=\de D^*V_-^*V_-D$. Thus the identity $U=D^* W D$ is satisfied with $W=\de(V_+^*V_+-V_-^*V_-)$.

Conversely, assume $U=D^* W D$ for some $W\in\cL(\cH_2)$. Since $U$ is selfadjoint, we may without loss of generality assume $W$ is selfadjoint. Now write $W=W_+-W_-$ with $W_+\geq 0$ and $W_-\geq 0$. Define $U_+=D^*W_+ D$ and $U_-=D^* W_- D$. Then $0\leq U_+=D^*W_+ D\leq\de_+ D^*D$, with $\de_+=\|W_+\|$, and, similarly, $0\leq U_-\leq\de_- D^*D$, with $\de_-=\|W_-\|$. Set $\de=\max\{\de_+,\de_-\}=\|W\|$. Then
\[
-\de D^*D\leq -\de_- D^*D\leq -U_-\leq U\leq U_+\leq \de_+ D^*D\leq \de D^*D.
\]
Hence our claim follows.
\end{proof}

\begin{proof}[\bf Proof of Theorem \ref{T:prec}.]
The proof is split into four parts.

\noindent {\bf Part I.} In the first part we prove statement (ii). Assume $A\prec B$, in the sense of (POii), i.e., $I-A^*B= D_B YD_B$ for some $Y\in\cL(\cD_B)$. Then $\re (I-A^*B)=D_B \re(Y) D_B$. Using \eqref{re(I-A^*B)identity}, we see that
\[
2D_B\re(Y)D_B=D_A^2+D_B^2+(A-B)^*(A-B).
\]
Hence
\begin{equation}\label{ReYpos}
D_B(2\re(Y)-I_{\cD_{B}})D_B=D_A^2+(A-B)^*(A-B)\geq 0.
\end{equation}
Since $\oran D_B=\cD_B$, it follows that $2\re(Y)-I_{\cD_{B}}\geq0$. In order to show $\kr(2\re(Y)-I_{\cD_{B}})=\{0\}$, it suffices to prove for any $x\in\cD_B$ that
\[
0=D_B(2\re(Y)-I_{\cD_{B}})D_Bx=\mat{cc}{D_A& (A-B)^*}\mat{c}{D_A\\ (A-B)}x
\]
implies $x=0$. So assume this identity is satisfied for some $x\in\cD_B$. Then $D_Ax=0$ and $Ax=Bx$. Since $D_Ax=0$, we have $\|Ax\|=\|x\|$. Thus $\|Bx\|=\|x\|$. However, $x\in \cD_B$ implies $\|Bx\|<\|x\|$, unless $x=0$. Thus necessarily $x=0$.\medskip

\noindent {\bf Part II.} Next we show the equivalence  of conditions (POi)--(POiv).

{\bf (POi) $\Rightarrow$ (POii):}
Take $Y=I-X^*B$, with $X$ as in (POi). Then
\begin{align*}
I-A^*B
&=D_B^2-(A^*-B^*)B=D_B^2-D_BX^*D_{B^*}B\\
&=D_B^2-D_BX^*BD_{B}=D_BYD_B.
\end{align*}

{\bf (POii) $\Rightarrow$ (POiii):}
Let $Y\in\cL(\cD_B)$ be as in (POii). Then
\[
\re (I-A^*B)=D_B \re(Y) D_B \ands \im (I-A^*B)= D_B\im(Y) D_B.
\]
Using $(A-B)^*(A-B)\leq 2\re(I-A^*B)-D_B^2$, by \eqref{re(I-A^*B)identity}, and writing $\vep=s e^{i\theta}$ we see that the right hand side in \eqref{poscon} dominates
\begin{align*}
&(1-2\re(\vep)+|\vep|^2)D_B^2+2(\re(\vep)-|\vep|^2)\re(I-A^*B)+\\
&\quad+2\im(\vep)\im(I-A^*B)=\\
&\quad=
D_B((1-2s\cos\theta+s^2)I+(2s\cos\theta-s^2)\re (Y)+2s\sin\theta \im(Y))D_B\\
&\quad=D_B(I+(2(\re(Y)-I)\cos\theta+2\im(Y)\sin\theta)s-(2\re(Y)-I)s^2)D_B.
\end{align*}
By Lemma \ref{L:geo}, (POiii) holds if there exists an $r>0$ such that for all $0\leq s\leq r$ and all $\theta\in[-\pi,\pi]$ we have
\begin{equation}\label{interform}
I+(2(\re(Y)-I)\cos \theta+2\im(Y)\sin\theta)s-(2\re(Y)-I)s^2\geq 0.
\end{equation}

For $\theta\in[-\pi,\pi]$, let $\theta_+\in[-\half\pi,\half\pi]$ be such that $\sin\theta_+=\sin\theta$ and $\cos\theta_+=|\cos \theta|$. Define $\vep_+=s(\sin\theta_++i\cos\theta_+)$. By statement (ii) we have $4\re(Y)-2 I\geq 0$. Thus $2(\re(Y)- I)\geq -2\re(Y)$. Therefore
\[
2(\re(Y)-I)\cos \theta\geq - 2\re(Y)|\cos \theta| =- 2\re(Y) \cos\theta_+.
\]
This yields
\begin{align*}
(2(\re(Y)-I)\cos \theta+2\im(Y)\sin\theta)s
&\geq -2(\re(Y) s\cos\theta_+ -\im(Y)s\sin\theta_+)\\
&=-2\re(\ov{\vep}_+Y)\geq -2s\|Y\|I.
\end{align*}
Next observe that, since $\re(Y)\leq \|\re(Y)\|I$, we have
\[
2\re(Y)-I\leq (2\|\re(Y)\|-1)I.
\]
Hence
\begin{align*}
&I+(2(\re(Y)-I)\cos \theta+2\im(Y)\sin\theta)s-(2\re(Y)-I)s^2\\
&\qquad\qquad \geq (1-2\|Y\|s-(2\|\re(Y)\|-1)s^2)I.
\end{align*}
Thus it suffices to show that the polynomial
\[
p(s)=1-2\|Y\|s-(2\|\re(Y)\|-1)s^2
\]
is positive on an open interval containing $0$. Since $p(0)=1>0$ it is clear that such an open neighborhood exists.

{\bf (POiii) $\Leftrightarrow$ (POiv):} The implication (POiii) $\Rightarrow$ (POiv)
is obvious. Conversely, assuming (POiv) holds, let $T_i=(1-\vep_i)B+\vep_i A=B+\vep_i(A-B)$ for
$|\vep_i|=r$, $i=1,2$. If $T_1$ and $T_2$ are contractions, than so is
$\half (T_1+T_2)=B-\half(\vep_1+\vep_2)(A-B)$. Now (POiii) follows because
\[
\left\{\frac{\vep_1+\vep_2}{2}\colon |\vep_1|=|\vep_2|=r\right\}
=\frac{r}{2}(\BT+\BT)=r\ov{\BD}=\{z\colon |z|\leq r\}.
\]
In particular, for $r$ in (POiii) we can take the same $r$ as in (POiv).

{\bf (POiii) $\Rightarrow$ (POii):}
By Lemma \ref{L:geo} the inequality \eqref{poscon} holds for each $\vep\in\BC$
with $|\vep|\leq r$.
It suffices to consider \eqref{poscon} for $\vep=-r,ir,-ir$. Taking $\vep=-r$ gives
\begin{align*}
0
&\leq (1+2r) D_B^2-2r\, \re(I-A^*B)-r^2(A-B)^*(A-B)\\
&\leq(1+r)^2 D_B^2-2r(1+r)\, \re(I-A^*B).
\end{align*}
The last inequality uses $(A-B)^*(A-B)\geq 2\re (I-A^*B)-D_B^2$, which
results from \eqref{re(I-A^*B)identity}.
Again by \eqref{re(I-A^*B)identity}, we see that $\re(I-A^*B)\geq0$.
Hence
\begin{equation}\label{RealBounds}
0\leq \re(I-A^*B)\leq \frac{(1+r)}{2 r} D_B^2.
\end{equation}
A similar argument, now with $\vep=ir$ and $\vep=-ir$, gives the
inequalities
\begin{equation}\label{ImagineBounds}
-\frac{1}{2r}D_B^2\leq\im(I-A^*B)\leq \frac{1}{2r}D_B^2.
\end{equation}
Hence by Lemma \ref{L:ReImIneq} we see that (POii) holds.

{\bf (POii) $\Rightarrow$ (POi):}
Applying Douglas' Lemma \cite{D66} to the identity in \eqref{ReYpos},
and using part (ii), we obtain that there exists an isometry
$\sbm{\wtilN\\\wtilM}$ mapping $\cD_B$ into $\sbm{\cD_A\\\cH_2}$
given by
\[
\mat{c}{\wtilN\\\wtilM}(2\re(Y)-I)^{\half}D_B=\mat{c}{D_A\\A-B}.
\]
Note that \eqref{ReYpos} implies $\cD_A\subset\cD_B$.
Set
\[
N=\wtilN (2\re(Y)-I)^{\half}\ands M=\wtilM (2\re(Y)-I)^{\half},
\]
so that
\begin{equation}\label{NMprops}
N D_B= D_A,\quad M D_B= A-B \ands N^*N+M^*M= 2\re (Y)-I.
\end{equation}
Now define $X=D_{B^*}M+B(I-Y^*)\in\cL(\cD_B,\cD_{B^*})$. Then
\begin{align*}
A-B
&=D_{B^*}^2 A-B(I- B^*A)
=D_{B^*}^2(A-B)+ D_{B^*}^2 B-B(I-A^*B)^*\\
&=D_{B^*}^2M D_B+D_{B^*}B D_B-BD_{B}Y^* D_B
=D_{B^*}^2(M+B(I-Y^*))D_B\\
&=D_{B^*}X D_B.
\end{align*}
Hence (POi) holds with this choice of $X$.

We have now proved the implications
\[
\mbox{(POi) } \Rightarrow \mbox{ (POii) } \Rightarrow \mbox{ (POiii) } \Rightarrow \mbox{ (POiv) } \Rightarrow \mbox{(POiii) } \Rightarrow \mbox{ (POii) } \Rightarrow \mbox{ (POi)},
\]
and hence the equivalence of (POi)--(POiv). The inefficiency in proving this equivalence is required for the computation of the inequalities of statement (iii).\medskip

\noindent {\bf Part III.}
Now we show that the equivalent definitions (POi)--(POiv) of $A\prec B$ define a pre-order, completing the proof of statement (ii). Clearly, $A\prec A$ for any contraction $A\in\cL_1(\cH_1,\cH_2)$. Assume $A\prec B$ and $B \prec C$. Let $X,\wtilX\in\cL(\cH_1,\cH_2)$ such that
\[
A-B=D_{B^*}X D_B\ands B-C=D_{C^*}\wtilX D_C.
\]
{}From (POiii) it follows that also $B^* \prec C^*$.
Following the argumentation in the proof of
(POii) $\Rightarrow$ (POi), we see that
there exist operators $N\in \cL(\cH_1)$
and $N_*\in\cL(\cH_2)$ such that
$D_B=N D_C$ and $D_{B^*}=N_* D_{C^*}$.
Hence $D_{B^*}=D_{B^*}^*=D_{C^*}^*N_*^*
=D_{C^*}N_*^*$. This implies
\[
A-C=A-B+B-C=D_{C^*}N_*^* X N D_C+D_{C^*}\wtilX D_C.
\]
Hence $A\prec C$. Thus $\prec$ is indeed a pre-order.\medskip

\noindent {\bf Part IV.} In the final part we prove statement (iii). The bounds for $\|Y\|$ in \eqref{XvsY} and \eqref{XnYineq} follow directly from the proofs of (POi) $\Rightarrow$ (POii) and (POiii) $\Rightarrow$ (POii). Indeed, applying Lemma \ref{L:ReImIneq} to \eqref{RealBounds} and \eqref{ImagineBounds}, yields $\|\re(Y)\|\leq  \frac{1+r}{2r}$ and $\|\im(Y)\|\leq\frac{1}{2r}$. Hence
\[
\|Y\|\leq \|\re(Y)\|+\|\im(Y)\|\leq  \frac{1+r}{2r}+\frac{1}{2r}
=\frac{2+r}{2r}.
\]
The identity $Y=I-X^*B$ in the proof of (POi) $\Rightarrow$ (POii) leads to the inequality for $\|Y\|$ in \eqref{XvsY}, via
\[
\|Y\|=\|I-X^*B\|\leq \|I\|+\|X^*\|\, \|B\|\leq 1+\|X\|.
\]
{}From the proof of (POii) $\Rightarrow$ (POi) we see that, given
$Y$ as in (POii), we may choose $X=D_B^* M+B(I-Y^*)$, with $M$
satisfying \eqref{NMprops}. Rewrite $X$ as
\begin{align*}
X&=D_B^* M+\half B(I-2\re(Y^*))+\half B(I-2i\im(Y^*))\\
&=D_B^* M+\half B(I-2\re(Y))+\half B(I+2i\im(Y)).
\end{align*}
Note that
\begin{align*}
\|Mx\|^2
&=\inn{M^*M x}{x}\leq \inn{(2\re(Y)-I)x}{x}=2\inn{\re(Y)x}{x}-\|x\|^2\\
&=2\|\re(Y)^\half x\|^2-\|x\|^2.
\end{align*}
Taking supremum over all $x\in\cD_B$ with $\|x\|=1$ we find
\begin{equation}\label{Mbound}
\|M\|^2\leq 2\|\re(Y)^\half\|^2-1=2\|\re(Y)\|-1.
\end{equation}
Note that
\[
\|I-2\re(Y)\|=2\|\re(Y)\|-1\ands
\|I-2i\im(Y^*)\|\leq 1+2\|\im (Y)\|.
\]
Hence
\begin{align*}
\|X\|
&\leq \|D_B^* M\|+\half \|B(I-2\re(Y))\|+\half \|B(I+2i\im(Y))\|\\
&\leq \|M\|+\half \|I-2\re(Y)\|+\half \|I+2i\im(Y)\|\\
&\leq \sqrt{2\|\re(Y)\|-1}+\|\re(Y)\|-\half+ \half+\|\im (Y)\|\\
&\leq\sqrt{2\|\re(Y)\|-1}+\|Y\|.
\end{align*}
Thus the bound on $\|X\|$ in \eqref{XvsY} applies. Combining this bound with the bound on $\|Y\|$ in \eqref{XnYineq} and the bound on $\|\re(Y)\|$ derived above, yields
\begin{align*}
\|X\|&\leq \sqrt{2\|\re(Y)\|-1}+\|Y\|\\
&\leq \sqrt{\frac{1+r}{r}-1}+\frac{2+r}{2r}=\frac{2+r}{2r}+r^{-1/2}
=\frac{2+2\sqrt{r}+r}{2r}.
\end{align*}

Finally, we derive the lower bound for $r$ in \eqref{rineq}.
For this purpose, recall from the proof of (POii) $\Rightarrow$ (POiii)
that \eqref{poscon} holds for all $\vep$ with $s=|\vep|$ such that
$p(s)=1-2\|Y\|s-(2\|\re(Y)\|-1)s^2\geq 0$. Hence the smallest (and
only) positive root of $p$ can serve as a lower bound for $r$. Thus assume
$s>0$, $p(s)=0$. If $\|\re(Y)\|=\half$, then $p(s)=0$ gives $s=\frac{1}{2\|Y\|}$,
in line with \eqref{rineq}. Assume $\|\re(Y)\|\not=\half$, i.e., $2\|\re(Y)\|-1>0$.
Then the smallest positive root of $p$ is then given by
\begin{align*}
&\frac{2\|Y\|-\sqrt{4\|Y\|^2+4(2\|\re(Y)\|-1)}}{-2(2\|\re(Y)\|-1)}
=\frac{\sqrt{\|Y\|^2+(2\|\re(Y)\|-1)} -\|Y\|}{2\|\re(Y)\|-1}\\
&\qquad\qquad=\frac{\|Y\|^2+(2\|\re(Y)\|-1) -\|Y\|^2}
{(2\|\re(Y)\|-1)(\sqrt{\|Y\|^2+(2\|\re(Y)\|-1)} +\|Y\|)}\\
&\qquad\qquad=\frac{1}
{\|Y\|+\sqrt{\|Y\|^2+2\|\re(Y)\|-1}}.
\end{align*}
Hence \eqref{rineq} holds also in this case.
\end{proof}

A few observations can be made directly from the definitions (POi)--(POiv)
of the pre-order relation and the preceding proof.

\begin{corollary}\label{C:directobs}
Assume $A\prec B$. Then
\begin{itemize}
\item[(i)] $A^*\prec B^*$;

\item[(ii)] $DAC \prec DBC$ for all $C\in\cL_1(\cH_0,\cH_1)$,
$D\in\cL_1(\cH_2,\cH_3)$;

\item[(iii)] $D_A=N D_B$ for some operator $N\in\cL(\cD_B,\cD_A)$
with $N^*N\leq 2\re(Y)-I$ where $Y$ is as in (POii);

\item[(iv)] $\cD_A\subset\cD_B$ and $A|_{\cD_B^\perp}=B|_{\cD_B^\perp}$.
\end{itemize}
\end{corollary}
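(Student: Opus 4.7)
The first two items should fall out directly from the flexibility granted by the equivalent formulations of $\prec$. For (i), I would take $X$ from (POi), so that $A-B=D_{B^*}XD_B$; taking adjoints yields $A^*-B^*=D_BX^*D_{B^*}$, which is exactly (POi) for the pair $(A^*,B^*)$ with multiplier $X^*$. For (ii), the cleanest route is via (POiv): if $A\prec B$ with radius $r$, then for every $|\vep|=r$ the operator $D((1-\vep)B+\vep A)C=(1-\vep)DBC+\vep DAC$ is a product of three contractions, hence again a contraction, so (POiv) is verified for $DAC\prec DBC$ with the same $r$.

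For (iii), I would reuse the construction already carried out inside the proof of (POii) $\Rightarrow$ (POi). From \eqref{ReYpos} one reads off
\[
D_B(2\re(Y)-I_{\cD_B})D_B=D_A^2+(A-B)^*(A-B)\geq D_A^2,
\]
and since $2\re(Y)-I\geq 0$ by part (ii), the square root $(2\re(Y)-I)^{\half}$ is a well-defined positive operator on $\cD_B$. Applying Douglas' lemma to the inequality
\[
D_A^2\leq \big((2\re(Y)-I)^{\half}D_B\big)^*\big((2\re(Y)-I)^{\half}D_B\big)
\]
supplies a contraction $\wtilN:\cD_B\to\cD_A$ with $D_A=\wtilN(2\re(Y)-I)^{\half}D_B$. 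Putting $N=\wtilN(2\re(Y)-I)^{\half}$ then gives the factorization $D_A=ND_B$ together with $N^*N\leq 2\re(Y)-I$, as required.

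For (iv), the containment $\cD_A\subset\cD_B$ is an immediate consequence of (iii): from $D_A=ND_B$ one has $\kr D_B\subset\kr D_A$, and taking orthogonal complements gives $\cD_A=(\kr D_A)^\perp\subset (\kr D_B)^\perp=\cD_B$. For the equality $A|_{\cD_B^\perp}=B|_{\cD_B^\perp}$, I would take $x\in\cD_B^\perp=\kr D_B$ and pair \eqref{ReYpos} with $x$, obtaining
\[
0=\inn{D_B(2\re(Y)-I)D_Bx}{x}=\|D_Ax\|^2+\|(A-B)x\|^2,
\]
which forces $(A-B)x=0$. The only real bookkeeping issue is in (iii), where one must check that $\wtilN$ can indeed be viewed as an element of $\cL(\cD_B,\cD_A)$ rather than of a larger space; everything else is essentially a rearrangement of identities already appearing in the proof of Theorem \ref{T:prec}.
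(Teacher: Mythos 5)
Your proof is correct and follows essentially the same route as the paper: the paper derives (i) and (ii) from (POiv) (your adjoint argument for (i) via (POi) is a trivially equivalent one-liner), obtains (iii) from the Douglas-lemma factorization already carried out in the proof of (POii) $\Rightarrow$ (POi), and reads off (iv) from (POi)/\eqref{ReYpos} exactly as you do. The bookkeeping point you flag in (iii) is handled in the paper the same way, by choosing $\wtilN$ to vanish on the orthogonal complement of the relevant range.
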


\begin{proof}[\bf Proof.]
Observations (i) and (ii) can be derived immediately from (POiv) in Theorem \ref{T:prec}; (iii) was proved in the proof of the implication (POii) $\Rightarrow$ (POi). Part (iv) can be read off from (POi).
\end{proof}

Denote the equivalence relation defined by $\prec$ by $\sim$. The characterizations of $\sim$ given in the next result again go back to  \cite{KSS91}, the inequalities for $\|\wtilX\|$ and $\|\wtilY\|$ are new.

%

\begin{theorem}\label{T:eqrel}
Let $A,B\in\cL_1(\cH_1,\cH_2)$. Then $A\sim B$ if and only if one of the following equivalent statements holds:
\begin{itemize}
\item[(ERi)]
$ A-B=D_{A^*}\wtilX D_B \mbox{ for some } \wtilX\in\cL(\cD_B,\cD_{A^*})$;

\item[(ERii)]
$I-A^*B=D_A \wtilY D_B \mbox{ for some }\wtilY\in\cL(\cD_B)=\cL(\cD_B,\cD_A)$.

\end{itemize}
Moreover, if $A\sim B$, and $A\prec B$ holds with $X$ as in (POi) and $Y$ as in (POii) and $B \prec A$ holds as in (POi) with $X$ replaced by $X'$ and as in (POii) with $Y$ replaced by $Y'$, then $\wtilX$ and $\wtilY$ can be chosen in such a way that
\[
\|\wtilX\|\leq \|X\|\sqrt{2\|X'\|+1}\ands
\|\wtilY\|\leq \|Y\|\sqrt{2\|\re(Y')\|-1}.
\]
Additional bounds on $\|\wtilX\|$ and $\|\wtilY\|$ are obtained by replacing the roles of $X$ and $X'$, respectively $Y$ and $Y'$. Conversely, if $\wtilX$ and $\wtilY$ are as in (ERi) and (ERii) and $\wtilX'\in\cL(\cD_A,\cD_{B^*})$ satisfies $B-A=D_{B^*}\wtilX D_A$, then the operators $X$, $X'$, $Y$ and $Y'$ satisfy
\begin{align*}
\|X\|\leq \|\wtilX'\|^2+\|\wtilX'\|\sqrt{1+\|\wtilX'\|^2},\quad \|Y\|\leq 2\|\wtilY\|^2,\\
\|X'\|\leq \|\wtilX\|^2+\|\wtilX\|\sqrt{1+\|\wtilX\|^2},\quad \|Y'\|\leq 2\|\wtilY\|^2.
\end{align*}

\end{theorem}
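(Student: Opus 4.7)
The plan is to establish $A\sim B \Llra$ (ERi) $\Llra$ (ERii) together with both sets of bounds by combining Douglas' factorization lemma with the defect factorization from Corollary \ref{C:directobs}(iii) and with identity \eqref{re(I-A^*B)identity}.

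For the forward direction, assume $A\sim B$, so both $A\prec B$ and (via Corollary \ref{C:directobs}(i)) $B^*\prec A^*$ hold. Corollary \ref{C:directobs}(iii) then supplies operators $N\in\cL(\cD_A,\cD_B)$ and $N_*\in\cL(\cD_{A^*},\cD_{B^*})$ with $D_B = N D_A$, $D_{B^*}=N_*D_{A^*}$, $N^*N\leq 2\re(Y')-I$, and $N_*^*N_*\leq 2\re(\hat Y)-I$, where $\hat Y$ denotes the (POii) parameter for $B^*\prec A^*$. Substituting the adjoint form $D_B = D_A N^*$ into the (POii) identity $I-A^*B = D_B Y D_B$ gives $I-A^*B = D_A(N^*Y)D_B$, so $\wtilY = N^*Y$ works for (ERii), with $\|N\|\leq\sqrt{2\|\re(Y')\|-1}$ by Theorem \ref{T:prec}(ii). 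For (ERi), the analogous substitution $D_{B^*}=D_{A^*}N_*^*$ into $A-B=D_{B^*}XD_B$ produces $\wtilX=N_*^*X$. The bound $\|N_*\|\leq\sqrt{1+2\|X'\|}$ follows by running the (POi)$\Rightarrow$(POii) construction of Theorem \ref{T:prec} on $B^*\prec A^*$ with $\hat X=(X')^*$: the resulting $\hat Y = I-X'A^*$ satisfies $\|2\re(\hat Y)-I\| = \|I-2\re(X'A^*)\|\leq 1+2\|X'\|$.

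For the converse direction, the core tool is the inequality $CD + DC^*\leq \varepsilon CC^* + \varepsilon^{-1}D^2$ (valid for self-adjoint $D\geq 0$, by expanding $(\sqrt\varepsilon C - \varepsilon^{-1/2} D)(\sqrt\varepsilon C^* - \varepsilon^{-1/2}D)\geq 0$), applied in tandem with \eqref{re(I-A^*B)identity}. From (ERii), the lower bound $2\re(I-A^*B)\geq D_A^2+D_B^2$ and the upper bound $2\re(D_A\wtilY D_B)\leq \varepsilon\|\wtilY\|^2 D_A^2+\varepsilon^{-1}D_B^2$ yield, at $\varepsilon=1/(2\|\wtilY\|^2)$, the domination $D_A^2\leq 4\|\wtilY\|^2D_B^2$; Douglas furnishes $N$ with $\|N\|\leq 2\|\wtilY\|$ and $D_A = D_B N^*$, the substitution produces $Y = N^*\wtilY$ satisfying (POii) for $A\prec B$ and $\|Y\|\leq 2\|\wtilY\|^2$, and the symmetric argument on the adjoint (ERii) delivers $B\prec A$ and $\|Y'\|\leq 2\|\wtilY\|^2$. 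From (ERi), paired with the auxiliary $\wtilX'$ satisfying $B-A = D_{B^*}\wtilX'D_A$, the algebraic identity $I-A^*B = D_B^2 + D_A(\wtilX')^*BD_B$—obtained by substituting $A^* = B^* - D_A(\wtilX')^*D_{B^*}$ into $A^*B$—combines with \eqref{re(I-A^*B)identity} to give $D_A^2\leq D_B^2 + 2\re(D_A(\wtilX')^*BD_B)$, and Young's inequality followed by a quadratic optimization over $\varepsilon$ produces the sharp bound $D_A^2\leq (\sqrt{\|\wtilX'\|^2+1}+\|\wtilX'\|)^2 D_B^2$. Douglas then gives $D_A = ND_B$ with $\|N\|$ matching this constant, and $X = -\wtilX'N$ delivers the claimed bound on $\|X\|$; the bound on $\|X'\|$ follows by the symmetric argument using $\wtilX$ in place of $\wtilX'$. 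The bare implication (ERi)$\Rightarrow A\sim B$, without $\wtilX'$ in hand, follows by applying the same machinery to $\wtilX$ alone, yielding $D_B^2\leq c^2 D_A^2$ and $D_{A^*}^2\leq c^2 D_{B^*}^2$ with $c = \sqrt{\|\wtilX\|^2+1}+\|\wtilX\|$.

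The main obstacle I expect is the quadratic optimization yielding the constant $\sqrt{\|\wtilX'\|^2+1}+\|\wtilX'\|$: after the substitution $u = \varepsilon\|\wtilX'\|^2$, the problem reduces to minimizing $(u+\|\wtilX'\|^2)/(u(1-u))$ on $(0,1)$, with critical point $u^* = \|\wtilX'\|(\sqrt{\|\wtilX'\|^2+1}-\|\wtilX'\|)$, and matching the resulting minimum value to the stated constant requires a short but delicate algebraic simplification (the key identity being $1 - u^* = \sqrt{\|\wtilX'\|^2+1}(\sqrt{\|\wtilX'\|^2+1}-\|\wtilX'\|)$). A secondary bookkeeping concern is keeping straight the two realizations of each defect operator, since factorizations such as $D_B = N D_A$ (as $\cH_1\to\cD_B$) and their adjoints $D_B = D_A N^*$ (as $\cD_B\to\cH_1$) must each be inserted on the correct side of the identities being manipulated.
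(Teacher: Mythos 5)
Your proposal is correct and follows essentially the same route as the paper's proof: the forward direction via Corollary \ref{C:directobs}(iii) together with the choice $Y'_*=I-X'A^*$ for the $(B^*\prec A^*)$ parameter, and the converse via identity \eqref{re(I-A^*B)identity} plus Douglas' lemma, with every constant matching the statement. The only cosmetic difference is that you extract the dominations such as $D_A^2\leq c^2D_B^2$ by an operator Young inequality optimized over $\varepsilon$, where the paper runs a pointwise scalar quadratic in $\lambda=\|D_Bx\|/\|D_Ax\|$; both yield the same constant $c=\|\wtilX'\|+\sqrt{1+\|\wtilX'\|^2}$.
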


\begin{proof}[\bf Proof.]
The proof is split into three parts.

\noindent{\bf Part I.} In the first part we show that $A\sim B$ implies (ERi) and (ERii) and we derive the bounds on $\|\wtilX\|$ and $\|\wtilY\|$. Hence, assume $A\sim B$, that is $A \prec B$ and $B\prec A$. Then also $B^*\prec A^*$,
by Corollary \ref{C:directobs}. Again by Corollary \ref{C:directobs}, we obtain
that there exist operators $N'\in\cL(\cD_A)$ and $N'_*\in\cL(\cD_{A^*})$ such that
$D_B=N' D_A$ and $D_{B^*}=N'_* D_{A^*}$. Then, with $X$ and $Y$ such that
$A-B=D_{B^*}XD_B$ and $I-A^*B=D_B YD_B$, we have
\begin{align*}
A-B=D_{B^*}X D_B=D_{A^*}N'^*_* X D_A,\\
I-A^*B=D_BYD_B=D_A N'^* Y D_B.
\end{align*}
Hence (ERi) and (ERii) hold with $\wtilX= N'^*_* X$, respectively $\wtilY=N'^* Y$.

In order to derive the bounds on $\|\wtilX\|$ and $\|\wtilY\|$, recall that by part (iii) in Corollary \ref{C:directobs} we can choose $N'$ and $N'_*$ above in
such a way that
\[
N'^*N'\leq 2\re(Y')-I\ands N'^*_*N'_*\leq 2\re(Y'_*)-I,
\]
with $Y'_*\in\cL(\cD_{A^*})$ such that $I-AB^*=D_{A^*}Y_*' D_{A^*}$.
In the same way as for the bound on $\|M\|$ in \eqref{Mbound} one derives
$\|N'\|^2\leq 2\|\re(Y')\|-1$. Hence
$\|\wtilY\|\leq \|N'\|\|Y\|\leq\|Y\|\sqrt{2\|\re(Y')\|-1}$.

Since $B-A=D_{A^*}X'D_A$, the relation $B^*\prec A^*$ is established
through $B^*-A^*=D_{A} X'^* D_{A^*}$. Following the proof of the implication
(POi) $\Rightarrow$ (POii), we see that we may take $Y'_*=I-X' A^*$. With this choice
for $Y_*'$ we get
\[
2\re(Y_*')-I=2I-X'A^*-AX'^*-I=I-2\re(X'A^*).
\]
Hence for each $x\in \cD_{A^*}$ with $\|x\|=1$
\begin{align*}
\|N_*' x\|^2&\leq\inn{(2\re(Y'_*)-I)x}{x}=\inn{(I-2\re(X'A^*))x}{x}\\
&=\|x\|^2-2\re\inn{A^*x}{X'^*x}\leq 1+2\|A^*x\|\|X'^*x\|\leq 1+2\|X'\|.
\end{align*}
Thus we obtain that $\|\wtilX\|\leq \|N_*'^*\|\|X\|\leq \|X\|\sqrt{2\|X'\|+1}$.\medskip

\noindent{\bf Part II.} Next we prove that (ERii) implies $A\sim B$ and compute the upper bounds on $\|Y\|$ and $\|Y'\|$. Assume $I-A^*B=D_A \wtilY D_B$ for some $\wtilY\in\cL(\cD_A)$. Then by \eqref{re(I-A^*B)identity} we have $D_A^2\leq 2\re (D_A \wtilY D_B)$. So for each $x\in \cH_1$
\[
\|D_A x\|^2=\inn{D_A^2 x}{x}\leq 2\re\inn{\wtilY D_Bx}{D_A x}\leq 2\|\wtilY\|\|D_B x\|
\| D_Ax\|.
\]
Hence $\|D_A x\|\leq 2\|\wtilY\|\|D_B x\|$ for each $x\in \cH_1$. This implies $D_A=M D_B$ for some $M\in\cL(\cH_1)$ with $\|M\|\leq 2\|\wtilY\|$. Thus $I-A^*B=D_B M^* \wtilY D_B$, and we conclude $A\prec B$ via (POii) with $Y=M^* \wtilY$. With $Y$ constructed in this way we have $\|Y\|\leq\|M^*\|\, \|\wtilY\|\leq 2\|\wtilY\|^2$, as claimed. Interchanging the roles of $A$ and $B$, and noting that $I-B^*A=D_B \wtilY^* D_A$, we find $B\prec A$ holds via $I-A^*B=D_B Y' D_B$ with $\|Y'\|\leq 2\|\wtilY^*\|^2=2\|\wtilY\|^2$. Hence $A\sim B$ holds along with the bounds on $\|Y\|$ and $\|Y'\|$.\medskip

\noindent{\bf Part III.} Finally we prove that $A\sim B$ follows from (ERi) and compute the upper bounds on $\|X\|$ and $\|X'\|$. Thus assume $A-B=D_{A^*}\wtilX D_B$ for some $\wtilX\in\cL(\cD_B,\cD_{A^*})$. Then
\[
I-A^*B-D_A^2=A^*A-A^*B=A^*D_{A^*}\wtilX D_B=D_{A}A^*\wtilX D_B.
\]
Using \eqref{re(I-A^*B)identity} we see that $D_B^2\leq 2\re(I-A^*B) -D_A^2$. Hence
\[
D_B^2\leq 2\re (D_{A}A^*\wtilX D_B + D_A^2)- D_A^2=D_A^2+2\re (D_{A}A^*\wtilX D_B).
\]
For any $x\in\cH_1$ we then find that
\[
\|D_Bx\|^2\leq\|D_Ax\|^2+2\re\inn{A^*\wtilX D_Bx}{ D_A x}
\leq \|D_Ax\|^2+2\|A^*\wtilX\|\|D_Bx\|\|D_Ax\|.
\]
Assume $D_Ax\not=0$. Set $\la=\|D_Bx\|/\|D_Ax\|$. Then the above inequality
can be expressed in terms of $\la$ as $\la^2\leq 1+2\|A^*\wtilX\|\la$. This
inequality is only satisfied on a bounded closed interval of 0, in fact, one easily computes that $\la^2\leq 1+2\|A^*\wtilX\|\la$ is equivalent to
\[
\|A^*\wtilX\|-\sqrt{1+\|A^*\wtilX\|^2}\leq\la\leq \|A^*\wtilX\|+\sqrt{1+\|A^*\wtilX\|^2}.
\]
In particular,
\[
\la\leq \|A^*\wtilX\|+\sqrt{1+\|A^*\wtilX\|^2}\leq \al \quad\mbox{with}\quad \al=\|\wtilX\|+\sqrt{1+\|\wtilX\|^2}.
\]
Observe that $\la\leq \al$ implies $\|D_Bx\|\leq\al \|D_Ax\|$ for $D_Ax\not=0$. Moreover, $D_Ax=0$ implies $D_Bx=0$, by the above bound on $\|D_Bx\|$, hence $\|D_Bx\|\leq\al \|D_Ax\|$ holds for any $x\in\cH_1$. This implies $D_B=MD_A$ for some $M\in\cL(\cD_A,\cD_B)$ with $\|M\|\leq \al$. Setting $X'=-\wtilX M$, we find that
\begin{align*}
B-A&=-D_{A^*}\wtilX D_B=-D_{A^*}\wtilX M D_A=D_{A^*}X' D_A,\\
\|X'\|&\leq \|\wtilX\|\,\|M\|=\|\wtilX\|^2+\|\wtilX\|\sqrt{1+\|\wtilX\|^2},
\end{align*}
and we conclude that $B\prec A$ and the claimed bound on $\|X'\|$ holds. Reversing the roles of $A$ and $B$, with $\wtilX'\in\cL(\cD_A,\cD_{B^*})$ such that $B-A=D_{B^*}\wtilX' D_A$, we find $A\prec B$ as in (POi) with $\|X\|\leq \|\wtilX'\|^2+\|\wtilX'\|\sqrt{1+\|\wtilX'\|^2}$.
\end{proof}

%
%
%
%

Now, for any $A\in\cL_1(\cH_1,\cH_2)$ we define the equivalence class
and cones
\begin{align*}
&\qquad\qquad\qquad[A]=\{B\in\cL_1(\cH_1,\cH_2)\colon A\sim B\},\\
&\vee(A)
=\{B\in\cL_1(\cH_1,\cH_2)\colon A\prec B\},\quad
\wedge(A)
=\{B\in\cL_1(\cH_1,\cH_2)\colon B\prec A\}.
\end{align*}

The following results are collected from \cite{S78}; a proof is added for completeness.

\begin{lemma}\label{L:ConesClasses}
Let $A\in\cL_1(\cH_1,\cH_2)$ be an isometry or a co-isometry and
let $B\in\cL_1^\circ (\cH_1,\cH_2)$. Then
\begin{align*}
&\qquad\qquad\qquad[A]=\{A\},\quad
\wedge(A)=\{A\},\\
&[B]=\vee(B)=\cL_1^\circ (\cH_1,\cH_2),\quad
\wedge(B)=\cL_1(\cH_1,\cH_2).
\end{align*}
\end{lemma}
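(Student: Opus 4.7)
My plan is to treat the two cases separately and to verify each of the four set identities using the most convenient of the equivalent characterizations from Theorem \ref{T:prec}.

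\textbf{The isometry/co-isometry case.} If $A$ is an isometry then $D_A=0$, and if $A$ is a co-isometry then $D_{A^*}=0$; in either situation $\cD_A=\{0\}$ or $\cD_{A^*}=\{0\}$, so $\cL(\cD_A,\cD_{A^*})=\{0\}$. Consequently, for any $C\in\cL_1(\cH_1,\cH_2)$ with $C\prec A$, characterization (POi) (with the roles $A\leftrightarrow C$, $B\leftrightarrow A$) forces $C-A=D_{A^*}XD_A=0$, i.e.\ $C=A$. Hence $\wedge(A)=\{A\}$. Since $[A]\subset\wedge(A)$ always, this also gives $[A]=\{A\}$.

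\textbf{The strict contraction case.} If $\|B\|<1$, then $D_B^2\geq(1-\|B\|^2)I>0$ and similarly $D_{B^*}^2>0$, so both $D_B$ and $D_{B^*}$ are boundedly invertible on $\cH_1$ and $\cH_2$ respectively. For any $C\in\cL_1(\cH_1,\cH_2)$, set
\[
X:=D_{B^*}^{-1}(C-B)D_B^{-1}\in\cL(\cH_1,\cH_2)=\cL(\cD_B,\cD_{B^*}).
\]
Then $C-B=D_{B^*}XD_B$, so (POi) yields $C\prec B$. This proves $\wedge(B)=\cL_1(\cH_1,\cH_2)$.

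\textbf{The identity $\vee(B)=\cL_1^\circ$.} The inclusion $\cL_1^\circ\subset\vee(B)$ is by the same construction with the roles reversed: for any $C\in\cL_1^\circ$ the defect operators $D_C,D_{C^*}$ are invertible, so $B-C=D_{C^*}[D_{C^*}^{-1}(B-C)D_C^{-1}]D_C$, giving $B\prec C$. For the reverse inclusion $\vee(B)\subset\cL_1^\circ$ I will use (POiv): if $B\prec C$ then there exists $r>0$ such that $(1-\vep)C+\vep B\in\cL_1(\cH_1,\cH_2)$ for all $|\vep|=r$. Taking $\vep=-r$ gives $\|(1+r)C-rB\|\leq 1$, whence
\[
\|C\|\leq \frac{1+r\|B\|}{1+r}<1
\]
because $\|B\|<1$. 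Therefore $C\in\cL_1^\circ$. The identity $[B]=\cL_1^\circ$ then follows: $[B]\subset\vee(B)=\cL_1^\circ$ is automatic, and conversely any $C\in\cL_1^\circ$ satisfies both $B\prec C$ and $C\prec B$ by the two explicit constructions above, so $C\sim B$.

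The only step that is not a direct rewriting of a defining condition is the inclusion $\vee(B)\subset\cL_1^\circ$; the small trick is to extract the norm bound on $C$ from the convex combination characterization (POiv) rather than from (POi) or (POii), since the latter two give information about $C-B$ but not immediately about $\|C\|$. Everything else is a one-line application of the invertibility of $D_B$ and $D_{B^*}$.
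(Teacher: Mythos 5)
Your proof is correct. Three of the four identities are established exactly as in the paper: the isometry/co-isometry case via the vanishing of $D_A$ or $D_{A^*}$ in (POi), and $\wedge(B)=\cL_1(\cH_1,\cH_2)$ together with $\cL_1^\circ(\cH_1,\cH_2)\subset\vee(B)$ via the explicit factorization $X=D_{B^*}^{-1}(C-B)D_B^{-1}$, which is available because the defect operators of a strict contraction are boundedly invertible.

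The one place where you genuinely diverge is the inclusion $\vee(B)\subset\cL_1^\circ(\cH_1,\cH_2)$. The paper argues by contradiction from (POi): assuming $\|C\|=1$, it picks unit vectors $v_n$ with $\|Cv_n\|\to 1$, notes $D_Cv_n\to 0$, and deduces from $B=C+D_{C^*}XD_C$ that $\|B\|\geq 1$. You instead read the bound directly off (POiv): the single choice $\vep=-r$ gives $\|(1+r)C-rB\|\leq 1$ and hence $\|C\|\leq\frac{1+r\|B\|}{1+r}<1$. Your route is shorter and quantitative --- it produces an explicit upper bound on $\|C\|$ in terms of $r$ and $\|B\|$ rather than only the qualitative conclusion $\|C\|<1$ --- at the cost of invoking the equivalence (POi) $\Leftrightarrow$ (POiv) from Theorem \ref{T:prec}, whereas the paper's argument runs entirely inside the factorization picture (POi). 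Both are sound; since the equivalence of (POi)--(POiv) is already established before the lemma, there is no circularity in your appeal to (POiv).
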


\begin{proof}[\bf Proof.]
Assume $C$ is a contraction such that $A\sim C$. In
particular, $C\prec A$. Then $C-A=D_{A^*}X D_A$ for some operator $X$.
Since $A$ is an isometry or a co-isometry, either $D_A=0$ or $D_{A^*}=0$.
Hence $C=A$. This shows $\wedge(A)=\{A\}$. Since $A\in [A]\subset\wedge(A)=\{A\}$,
also $[A]=\{A\}$.

Since $\|B\|<1$, the defect operators $D_B$ and $D_{B^*}$ are invertible on $\cH_1$, respectively $\cH_2$. Thus for any $C\in\cL_1(\cH_1,\cH_2)$ we have $C-B=D_{B^*}XD_B$ with $X= D_{B^*}^{-1}(C-B)D_{B}^{-1}$, and thus $C\prec B$. This proves $\wedge(B)=\cL_1(\cH_1,\cH_2)$.

Next assume $B\prec C$, say $B-C=D_{C^*}XD_C$ with $X\in\cL(\cD_C,\cD_{C^*})$.
Assume $\|C\|=1$. Then there exists a sequence $v_1,v_2,\ldots\in\cH_1$
with $\|v_n\|=1$ for all $n$ and $\|C v_n\|\to 1$. This implies $D_C v_n\to 0$. Thus we have
\begin{align*}
\|B\|&\geq\|B v_n\|=\|Cv_n+D_{C^*}XD_C v_n\|\\
&\geq |\|Cv_n\|-\|D_{C^*}XD_C v_n\||\to|1-0|=1,
\end{align*}
in contradiction with $\|B\|<1$. Thus $\|C\|<1$, that is $C\in\cL_1^\circ (\cH_1,\cH_2)$.
Since $B\prec C$ holds for any $C\in\cL_1^\circ (\cH_1,\cH_2)$, we see that
$\vee(B)=\cL_1^\circ (\cH_1,\cH_2)$. Finally, note that
$[B]=\vee(B) \cap \wedge(B)=\cL_1^\circ (\cH_1,\cH_2)$.
\end{proof}


\section{Specification to analytic Laurent operators}
\setcounter{equation}{0}\label{S:LaurentPreOrder}

Throughout the remainder of this paper $\cU$ and $\cY$ denote separable Hilbert spaces. Since for any $K\in L^\infty(\cU,\cY)$ we have $\|K\|_\infty=\|L_K\|=\|T_K\|$, the pre-order and equivalence relations from Section \ref{S:BallPreOrder} induce pre-order and equivalence relations on the closed unit balls of the Banach spaces $L^\infty(\cU,\cY)$ and $H^\infty(\cU,\cY)$, denoted for both Banach spaces by $\precI$, respectively $\simI$, by restricting the pre-order structure and equivalence relation on $\cL_1(L^2_\cU, L^2_\cY)$ (or $\cL_1(H^2_\cU, H^2_\cY)$). Note that, in view of (POiii) and the fact that $\|L_K\|=\|T_K\|$ for each $K\in L^\infty(\cU,\cY)$, it follows that $L_{K_1}\prec L_{K_2}$ holds if and only if $T_{K_1}\prec T_{K_2}$. In this section we describe $\precI$ and $\simI$ in terms of the elements of $L^\infty_1(\cU,\cY)$ and $S(\cU,\cY)$.

We start with the Banach space $L^\infty(\cU,\cY)$. In order to characterize $\precI$, we require some more notation. For $K\in L^\infty(\cU,\cY)$ we define $K^*\in L^\infty(\cY,\cU)$ to be the function given by $K^*(e^{it})=K(e^{it})^*$, e.a\ on $\BT$. In case $K\in L^\infty_1(\cU,\cY)$ we define $D_K\in L^\infty(\cU,\cU)$ by $D_K(e^{it})=(I-K^*(e^{it})K(e^{it}))^{\half}$, a.e.\ on $\BT$. Note that $L_K^*=L_{K^*}$ and $D_{L_K}=L_{D_K}$.

\begin{theorem}\label{T:precLinfty}
Let $F,G\in L^\infty_1(\cU,\cY)$. Then $F\precI G$ if and only if one of the following four equivalent statements holds:
\begin{itemize}
\item[(LIPOi)]
$F-G=D_{G^*}Q D_G$, a.e.\ on $\BT$, for some $Q\in L^\infty(\cU,\cY)$;

\item[(LIPOii)]
$I-F^*G=D_G R D_G$, a.e.\ on $\BT$, for some $R\in L^\infty(\cU,\cU)$;

\item[(LIPOiii)]
there exist $r>0$ such that $(1-\vep)G+\vep F\in L^\infty_1(\cU,\cY)$ for all $\vep\in\BC,\,|\vep|\leq r$;

\item[(LIPOiv)]
there exist $r>0$ such that $(1-\vep)G+\vep F\in L^\infty_1(\cU,\cY)$ for all $\vep\in\BC,\,|\vep|=r$.
\end{itemize}
Moreover, the functions $Q$ and $R$ and constant $r>0$ can be chosen in such a way that
\begin{align*}
&\|Q\|_\infty\leq \|R\|_\infty+\sqrt{2\|\re(R)\|_\infty-1},\quad \|R\|_\infty\leq 1+\|Q\|_\infty,\\
&\|Q\|_\infty\leq \frac{2+2\sqrt{r}+1}{2r}, \quad \|R\|_\infty\leq \frac{2+r}{2r},\\
&r\geq \left(\|R\|_\infty+\sqrt{\|R\|_\infty^2+ 2\|\re(R)\|_\infty-1}\right)^{-1}.
\end{align*}
\end{theorem}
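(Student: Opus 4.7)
The plan is to transport Theorem~\ref{T:prec} from the operator level to the symbol level via the correspondence $K \mapsto L_K$. Since this map is linear and isometric from $L^\infty_1(\cU,\cY)$ into $\cL_1(L^2_\cU,L^2_\cY)$, and respects products, adjoints, and defect operators (that is, $L_{KK'}=L_KL_{K'}$, $L_K^*=L_{K^*}$, and $L_{D_K}=D_{L_K}$), the two conditions (LIPOiii) and (LIPOiv) are \emph{literally} conditions (POiii) and (POiv) applied to $L_F,L_G$. By Theorem~\ref{T:prec} this already establishes (LIPOiii) $\Leftrightarrow$ (LIPOiv) $\Leftrightarrow$ $F \precI G$ with the same constant $r$, so the inequality \eqref{rineq} transfers directly to give the claimed lower bound on $r$, once (LIPOii) has been produced.

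The easy implications between the symbol conditions are handled by mimicking the corresponding proofs of Theorem~\ref{T:prec} pointwise. For (LIPOi) $\Rightarrow$ (LIPOii), setting $R := I - Q^* G$ a.e.\ and using $G D_G = D_{G^*} G$ yields $I - F^*G = D_G R D_G$ a.e., together with $\|R\|_\infty \leq 1 + \|Q\|_\infty$, exactly as in (POi) $\Rightarrow$ (POii). For (LIPOii) $\Rightarrow$ (LIPOiii), applying Lemma~\ref{L:geo} at a.e.\ $e^{it}$ and running the polynomial computation from the proof of (POii) $\Rightarrow$ (POiii) uniformly in $e^{it}$ produces the required $r > 0$ and the bound from \eqref{rineq}.

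The main obstacle is the implication (LIPOiii) $\Rightarrow$ (LIPOii), where one must build a \emph{measurable} essentially bounded symbol $R$. The strategy is fiberwise: Lemma~\ref{L:geo} applied at a.e.\ $e^{it}$ with $\vep=-r,ir,-ir$ yields the pointwise inequalities \eqref{RealBounds} and \eqref{ImagineBounds} for $F(e^{it}),G(e^{it})$, and Lemmas~\ref{L:ReImIneq}--\ref{L:Basic} produce $R(e^{it})$ with $\|R(e^{it})\|\leq \tfrac{2+r}{2r}$. The delicate point is measurability: the appeal to Douglas' lemma in the proof of Lemma~\ref{L:Basic} is not a priori canonical. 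I would resolve this by making the factorization in Lemma~\ref{L:Basic} explicit, obtaining the positive and negative parts $U_\pm$ of the selfadjoint symbol $U$ via the Borel functional calculus (which is measurable in $e^{it}$) and replacing Douglas' lemma by the specific pseudo-inverse factorization $V_\pm = U_\pm^{1/2}(\delta^{1/2} D)^\dagger$ extended by zero on the kernel complement. This yields $R \in L^\infty(\cU,\cU)$ with the claimed norm bound in terms of $r$.

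Once $R$ is in hand, (LIPOii) $\Rightarrow$ (LIPOi) is obtained by mimicking (POii) $\Rightarrow$ (POi) fiberwise: from the identity $D_G(2\re R-I)D_G = D_F^2 + (F-G)^*(F-G)$ a.e.\ on $\BT$, extract $N(e^{it}),M(e^{it})$ as in \eqref{NMprops} via the same measurable version of Douglas' lemma, and define $Q(e^{it}) := D_{G^*}(e^{it})M(e^{it}) + G(e^{it})(I - R(e^{it})^*)$. The norm estimates in the ``Moreover'' clause, relating $\|Q\|_\infty$, $\|R\|_\infty$, $\|\re(R)\|_\infty$, and $r$, then transfer verbatim from part (iii) of Theorem~\ref{T:prec} applied at each $e^{it}\in\BT$.
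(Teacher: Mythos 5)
Your proof is essentially correct, but it takes a genuinely different route from the paper on the one nontrivial point, namely producing the symbols $Q$ and $R$ from $F\precI G$. The paper never works fiberwise: it observes that (LIPOi)/(LIPOii) trivially imply (POi)/(POii) for $A=L_F$, $B=L_G$ with $X=L_Q$, $Y=L_R$, and for the converse it invokes Proposition \ref{P:CLTapp} (a commutant-lifting argument: any bounded $X$ with $L_P=L_MXL_N$ can be replaced by a Laurent operator $L_Q$ with $\|Q\|_\infty=\|X\|$) applied to $P=F-G$, $M=D_{G^*}$, $N=D_G$, resp.\ $P=I-F^*G$, $M=N=D_G$. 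All norm bounds then transfer verbatim from Theorem \ref{T:prec}(iii) since $\|Q\|_\infty=\|L_Q\|$. You instead rebuild $R(e^{it})$ and $Q(e^{it})$ pointwise a.e.\ on $\BT$ and confront the measurability of the Douglas factors head-on; this avoids dilation theory entirely, at the price of a measurable-selection argument that you only sketch. That sketch is completable: the canonical Douglas factor (vanishing on the complement of the closed range) is an SOT-limit of the uniformly bounded measurable approximants $U_\pm^{1/2}(\de^{1/2}D+\vep I)^{-1}$, and pointwise SOT-limits preserve measurability for separable spaces; likewise the positive/negative parts via $U_\pm=\half(|U|\pm U)$ are measurable. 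Two small points you should make explicit if you write this up: (a) in passing from (LIPOiii) to the pointwise condition (POiii) at a.e.\ $e^{it}$ you must interchange the quantifiers ``for all $|\vep|\le r$'' and ``a.e.\ $t$'', which requires restricting to a countable dense set of $\vep$ and using continuity in $\vep$ to get a single exceptional null set; (b) the passage from the pointwise bounds to the stated $\|\cdot\|_\infty$ bounds uses that the right-hand sides are monotone in $\|R(e^{it})\|$ and $\|\re R(e^{it})\|$ before taking essential suprema. With those in place your argument is a valid, more self-contained alternative; the paper's route is shorter because Proposition \ref{P:CLTapp} does the ``upgrade to a symbol'' in one stroke and is reused elsewhere in the paper anyway.
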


Before proving Theorem \ref{T:precLinfty} we derive a general result which will also be of use in Section \ref{S:Redheffer}.

\begin{proposition}\label{P:CLTapp}
Let $M\in L^\infty(\cW,\cX)$, $N\in L^\infty(\cU,\cV)$ and $P\in L^\infty(\cU,\cX)$. Assume there exists an operator $X\in \cL(L^2(\cV),L^2(\cW))$ such that $L_P=L_M X L_N$. Then without loss of generality $X=L_Q$ for some $Q\in L^\infty(\cV,\cW)$ with $\|Q\|_{\infty}=\|X\|$ and $P=MQN$.
\end{proposition}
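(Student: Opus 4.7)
The plan is to ``Laurent-ify'' $X$ by averaging it against the bilateral shifts, exploiting that Laurent operators are precisely the intertwiners of the shift. Let $S_\cV$ denote multiplication by $e^{it}$ on $L^2(\cV)$, and similarly define $S_\cU$, $S_\cW$, $S_\cX$. A standard fact is that $T \in \cL(L^2(\cV), L^2(\cW))$ is a Laurent operator (i.e., $T = L_Q$ for some $Q \in L^\infty(\cV,\cW)$) if and only if $T S_\cV = S_\cW T$, and then $\|T\| = \|Q\|_\infty$. Moreover, since $L_M$, $L_N$, $L_P$ are themselves Laurent, $L_M S_\cW = S_\cX L_M$, $L_N S_\cU = S_\cV L_N$, and $L_P S_\cU = S_\cX L_P$.

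First I would form the Ces\`aro averages
\begin{equation*}
X_n = \frac{1}{n}\sum_{k=0}^{n-1} S_\cW^{-k}\, X\, S_\cV^{k} \qquad (n \geq 1).
\end{equation*}
Each summand is a conjugation of $X$ by unitaries, so $\|X_n\| \leq \|X\|$. Using the intertwining relations for $L_M$ and $L_N$ one has $L_M\, S_\cW^{-k} X S_\cV^k\, L_N = S_\cX^{-k}\, (L_M X L_N)\, S_\cU^k = S_\cX^{-k} L_P S_\cU^k = L_P$, so averaging gives $L_M X_n L_N = L_P$ for every $n$.

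Next, since $\cV$ and $\cW$ are separable, the closed ball of radius $\|X\|$ in $\cL(L^2(\cV), L^2(\cW))$ is weak-$*$ compact and metrizable, so I extract a weak-$*$ convergent subsequence $X_{n_j} \to \hat X$ with $\|\hat X\| \leq \|X\|$. A telescoping computation yields
\begin{equation*}
X_n S_\cV - S_\cW X_n = \tfrac{1}{n}\bigl(S_\cW^{-(n-1)} X S_\cV^{n} - S_\cW X\bigr),
\end{equation*}
whose operator norm is bounded by $2\|X\|/n \to 0$. Passing to the weak-$*$ limit gives $\hat X S_\cV = S_\cW \hat X$, so $\hat X = L_Q$ for some $Q \in L^\infty(\cV,\cW)$ with $\|Q\|_\infty = \|\hat X\| \leq \|X\|$. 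Finally, weak-$*$ continuity of the map $Y \mapsto L_M Y L_N$ (clear from its adjoint action on the trace-class predual) lets us pass to the limit in $L_M X_{n_j} L_N = L_P$ to obtain $L_M L_Q L_N = L_P$, i.e., $L_{MQN} = L_P$, whence $MQN = P$ a.e.\ on $\BT$. Replacing the original $X$ by $L_Q$ settles the ``without loss of generality'' claim: after the replacement, $X = L_Q$ and the equality $\|Q\|_\infty = \|X\|$ is automatic.

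The main technical step is the telescoping identity together with weak-$*$ continuity of left and right multiplication by a fixed bounded operator; both are routine but must be set up carefully, in particular to ensure that the weak operator limit (which agrees with the weak-$*$ limit on bounded sets here) is preserved through $L_M(\,\cdot\,)L_N$.
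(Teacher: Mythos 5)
Your proof is correct, but it takes a genuinely different route from the paper. The paper restricts $X$ to an operator from $\oran L_N$ into $\oran L_{M^*}$, checks that it intertwines the compressions of the bilateral shifts to these subspaces, and invokes the Sz.-Nagy--Foias commutant lifting theorem to produce a norm-preserving shift-intertwining extension, which is then a Laurent operator. You instead exploit the fact that all the shifts in play are \emph{bilateral}, hence unitary, and run the classical averaging argument: the Ces\`aro means $X_n=\frac{1}{n}\sum_{k=0}^{n-1}S_\cW^{-k}XS_\cV^k$ stay in the ball of radius $\|X\|$, still satisfy $L_MX_nL_N=L_P$ by the intertwining of $L_M$ and $L_N$ with the shifts, and asymptotically commute with the shift by your telescoping identity, so any weak-$*$ cluster point is a Laurent operator $L_Q$ with $\|Q\|_\infty\le\|X\|$; weak-$*$ continuity of $Y\mapsto L_MYL_N$ (adjoint of a trace-class map on the predual) then gives $L_{MQN}=L_P$. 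Your reading of the norm equality is also the right one: one only ever gets $\|Q\|_\infty\le\|X\|$ relative to the original $X$, and the stated equality is what holds after the replacement $X:=L_Q$ -- which is all that is used later in the paper, since the point is that the Laurent witness is no larger. What each approach buys: the paper's argument is a one-line application of machinery it already cites and carries over verbatim to settings where genuine lifting is needed (e.g.\ Toeplitz rather than Laurent operators); yours is more elementary and self-contained, avoiding commutant lifting entirely at the cost of the weak-$*$ compactness bookkeeping (separability of $\cV$ and $\cW$, which holds in the paper's context, gives you metrizability and hence subsequences; otherwise one would pass to subnets, which changes nothing).
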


\begin{proof}[\bf Proof.]
Set $\cH=\oran L_N$ and $\cH'=\oran L_M^*=\oran L_{M^*}$. Without loss of generality we may view $X$ as an operator from $\cH$ into $\cH'$. For any Hilbert space $\cK$ we write $V_\cK$ for the forward shift on $L^2(\cK)$. Then define $T=P_{\cH}V_\cV|_{\cH}$ and $T'=P_{\cH'}V_\cW|_{\cH'}$. Since $V_\cV L_N=L_N V_\cV$ and $V_\cW L_{M^*}=L_{M^*} V_\cW$ we have $V_\cV \cH\subset\cH$ and $V_\cW \cH'\subset\cH'$. This shows that $V_\cV$ and $V_\cW$ are unitary dilations of $T$ and $T'$, respectively, in the sense of \cite{NF70}. Moreover, since
\begin{align*}
L_M V_\cW X L_N=V_{\cX}L_M X L_N=V_{\cX} L_P=L_P V_{\cU}=L_M X L_N V_\cU
=L_M X V_\cV L_N,
\end{align*}
we obtain from the definitions of $\cH$ and $\cH'$, the fact that $V_\cV$ and $V_\cW$ are unitary dilations of $T$ and $T'$, respectively, and that $X$ maps $\cH$ into $\cH'$ that $T'X=XT$. Then by the Sz.-Nagy-Foias commutant lifting theorem \cite{NF70} there exists an operator $\wtilX\in \cL(H^2(\cV), H^2(\cW))$ with $\|\wtilX\|=\|X\|$, $P_{\cH'}\wtilX|_\cH=X$ and $V_\cW \wtilX=\wtilX V_\cV$. The latter says that $\wtilX=L_Q$ for some $Q\in L^\infty(\cV,\cW)$, with $\|Q\|_\infty=\|L_Q\|=\|\wtilX\|=\|X\|$. Since $P_{\cH'}L_Q|_\cH=X$, we have $L_P=L_M L_Q L_N=L_{MQN}$, that is $P=MQN$.
\end{proof}

\begin{proof}[\bf Proof of Theorem \ref{T:precLinfty}.]
We show that (LIPOi)--(LIPOiv) are equivalent to the corresponding statements (POi)--(POiv) for $L_F\prec L_G$. For (LIPOiii) and (PIPOiv) it suffices to observe that for any $\vep \in\BC$ we have
\[
L_{(1-\vep)G+\vep F}=(1-\vep)L_G+\vep L_F.
\]
Hence $(1-\vep)L_G+\vep L_F\in\cL_1(L^2_\cU, L^2_\cY)$ holds if and only if $(1-\vep)G+\vep F\in L^\infty_1(\cU,\cY)$. Moreover, (LIPOi), respectively (LIPOii), imply that (POi), respectively (POii), hold with $A=L_F$, $B=L_G$ and $X=L_Q$, respectively $Y=L_R$. It remains to show that if $L_F\prec L_G$, then one can achieve this via (POi) and (POii) with $X$ and $Y$ Laurent operators. However, this is a direct consequence of Proposition \ref{P:CLTapp} with $P=F-G$, $M=D_{G^*}$ and $N=D_G$ in case of (POi), and $P=I-F^*G$ and $M=N=D_G$ in case of (POii).

The bounds on $\|Q\|_\infty$, $\|R\|_\infty$ and $r$ follow directly from the bounds on $X$, $Y$ and $r$ in \eqref{XvsY}--\eqref{rineq} and the fact that $\|Q\|_\infty=\|L_Q\|$ and $\|R\|_\infty=\|L_R\|$.
\end{proof}

We proceed with the Banach space $H^\infty(\cU,\cY)$. Since we can view $H^\infty(\cU,\cY)$ as a sub-Banach space of $L^\infty(\cU,\cY)$, the pre-order $\precI$ on $H^\infty(\cU,\cY)$ is also characterized by (LIPOi)--(LIPOiv). However, for the purpose of the present paper we also need a characterization in terms of the values on $\BD$. Note that (LIPOiii) and (LIPOiv) simply extend to $\BD$, since $F,G\in H^\infty(\cU,\cY)$ implies $(1-\vep)G+\vep F \in H^\infty(\cU,\cY)$, and hence, if $F\precI G$ and $r$ is as in (LIPOiii) and (LIPOiv), then $(1-\vep)G+\vep F\in\ S(\cU,\cY)$, that is, $(1-\vep)G(\la)+\vep F(\la)\in\cL_1(\cU,\cY)$ for each  $\la\in\BD$ and $|\vep|\leq r$. This shows in particular that $F(\la)\prec G(\la)$ for each $\la\in\BD$. However, that $F\prec G$ holds pointwise on $\BD$ is in general not strong enough to conclude $F\precI G$.

\begin{corollary}\label{C:imps}
Let $F,G\in S(\cU,\cY)$. Then
\[
F\precI G\quad \Longrightarrow\quad F(\la) \prec G(\la)\ (\la\in\BD)\quad
\Longleftrightarrow \quad F(0) \prec G(0).
\]
Moreover, the implication $F(\la)\prec G(\la)$, $\la\in\BD$ $\Rightarrow$ $F\precI G$ in general does not hold.
\end{corollary}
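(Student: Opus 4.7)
The first implication of the corollary is immediate: if $F\precI G$, then by (LIPOiii) of Theorem \ref{T:precLinfty} there is $r>0$ with $(1-\vep)G+\vep F\in S(\cU,\cY)$ for $|\vep|\leq r$; evaluating at any $\la\in\BD$ and applying (POiii) of Theorem \ref{T:prec} gives $F(\la)\prec G(\la)$. Likewise, the implication $F(\la)\prec G(\la)$ for all $\la\in\BD$ $\Rightarrow F(0)\prec G(0)$ is just the specialization $\la=0$.

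The substantive content is the converse $F(0)\prec G(0)\Rightarrow F(\la)\prec G(\la)$ for every $\la\in\BD$. My plan begins with the following Schwarz-type rigidity lemma for Schur functions: \emph{if $h\in S(\cU,\cY)$ and $u\in\cU$ satisfies $\|h(0)u\|=\|u\|$, then $h(\la)u=h(0)u$ for every $\la\in\BD$.} Indeed, viewing $u$ as a constant element of $H^2_\cU$, contractivity of $T_h$ gives $\sum_n\|h_nu\|^2=\|T_hu\|_{H^2_\cY}^2\leq\|u\|_{H^2_\cU}^2=\|u\|^2=\|h_0u\|^2$, forcing $h_nu=0$ for $n\geq 1$.

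Now I unpack $F(0)\prec G(0)$ via (POi) as two conditions: (a) $F(0)u=G(0)u$ for $u\in\cK:=\kr D_{G(0)}$, and (b) $F(0)^*v=G(0)^*v$ for $v\in\cK^*:=\kr D_{G(0)^*}$. From (a) and the rigidity lemma applied to both $F$ and $G$, one gets $F(\la)u=G(\la)u$ for $u\in\cK$ and every $\la\in\BD$. For (b), set $w=F(0)^*v=G(0)^*v$: then $\|w\|=\|v\|$ and $F(0)w=G(0)w=v$, so $w\in\cK$; the lemma then yields $F(\la)w=v=G(\la)w$ for all $\la$, and since $w\in\kr D_{F(\la)}\cap\kr D_{G(\la)}$, applying $F(\la)^*$ and $G(\la)^*$ to these identities gives $F(\la)^*v=w=G(\la)^*v$. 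A brief M\"obius-transform variant of the same rigidity (evaluating at $\la_0\in\BD$ is evaluating at $0$ for the Schur function $G\circ\phi_{\la_0}$ where $\phi_{\la_0}(\mu)=(\mu+\la_0)/(1+\ov{\la_0}\mu)$) then shows $\kr D_{G(\la)}=\cK$ and $\kr D_{G(\la)^*}=\cK^*$ for every $\la\in\BD$, so the two pointwise conclusions are precisely Douglas' range/kernel conditions that yield the factorization $F(\la)-G(\la)=D_{G(\la)^*}X(\la)D_{G(\la)}$, i.e.\ $F(\la)\prec G(\la)$ via (POi).

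For the final non-implication I would use the scalar example $\cU=\cY=\BC$, $G(\la)=\la$ and $F(\la)=0$. Pointwise $G(\la)$ is a strict contraction, so Lemma \ref{L:ConesClasses} gives $F(\la)\prec G(\la)$ for every $\la\in\BD$. However $T_G$ is the unilateral shift on $H^2$, an isometry, so the same lemma gives $\wedge(T_G)=\{T_G\}$; since $T_F=0\neq T_G$, this forces $F\not\precI G$. The main obstacle in the whole argument is the reverse direction of the equivalence: a naive (POiii)-based approach fails, because $\|(1-\vep)G(0)+\vep F(0)\|\leq 1$ provides no control of $(1-\vep)G(\la)+\vep F(\la)$ at other $\la$ (maximum modulus points the wrong way), so one is forced to work with the (POi) factorization and exploit Schur-type rigidity to propagate both the pointwise identities and the equality of the defect kernels from $\la=0$ to all of $\BD$.
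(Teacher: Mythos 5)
Your handling of the first implication and of the final non-implication is fine: the counterexample $F=\un{0}$, $G(\la)=\la$, with $T_G$ the unilateral shift and Lemma \ref{L:ConesClasses}, is exactly the paper's example. The genuine gap is in the central step $F(0)\prec G(0)\Rightarrow F(\la)\prec G(\la)$. You ``unpack'' (POi) into the two kernel conditions (a) $(F(0)-G(0))|_{\kr D_{G(0)}}=0$ and (b) $(F(0)-G(0))^*|_{\kr D_{G(0)^*}}=0$, and at the end you assert that these conditions at $\la$ ``are precisely Douglas' range/kernel conditions that yield the factorization'' $F(\la)-G(\la)=D_{G(\la)^*}X(\la)D_{G(\la)}$. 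That equivalence is false: (a) and (b) only give the inclusions $\oran(F(\la)-G(\la))\subset\oran D_{G(\la)^*}$ and $\oran(F(\la)-G(\la))^*\subset\oran D_{G(\la)}$ of \emph{closures} of ranges, whereas Douglas' lemma needs an inclusion of actual ranges (equivalently a quadratic domination) to produce a bounded middle factor. Concretely, take $\cU=\cY=\ell^2$ and the constant Schur functions $G\equiv B:=\diag(1-\tfrac1n)_{n\geq1}$ and $F\equiv-B$. Then $D_{G(\la)}=D_{G(\la)^*}=D_B$ is injective, so (a) and (b) hold vacuously at every $\la$, yet $(1-\vep)G(\la)+\vep F(\la)=(1-2\vep)B$ has norm $|1-2\vep|>1$ for $\vep=-r$, so $F(\la)\not\prec G(\la)$ by (POiv). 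Hence the two pointwise conclusions you propagate across $\BD$ with your (correct) Schwarz-type rigidity lemma are strictly weaker than $F(\la)\prec G(\la)$, and the argument does not close.

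The paper proves this direction by an entirely different mechanism: Theorem 2 of \cite{S76} gives, for any $K\in S(\cU,\cY)$ and any $\la,\zeta\in\BD$, an operator $T_{\la,\zeta}$ with $K(\la)=K(\zeta)+D_{K(\zeta)^*}T_{\la,\zeta}D_{K(\zeta)}$, so that all values of a Schur class function on $\BD$ lie in a single $\sim$-equivalence class; then $F(0)\prec G(0)$ yields $F(\la)\sim F(0)\prec G(0)\sim G(\la)$ by transitivity of the pre-order. To make your approach work you would need such a quantitative Schwarz--Pick-type factorization relating $K(\la)$ to $K(0)$ through the defect operators of $K(0)$, not merely the rigidity of $K$ on the kernels of those defect operators.
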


\begin{proof}[\bf Proof.]
We already observed that the first implication holds. The second implication is a consequence of Theorem 2 in \cite{S76}. Indeed, applying Theorem 2 in \cite{S76} to a function $K\in S(\cU,\cY)$ one finds that for any $\la,\zeta\in\BD$ there exists an operator $T_{\la,\zeta}$ such that $K(\la)=K(\zeta)+D_{K(\zeta)^*}T_{\la,\zeta} D_{K(\zeta)}$, simply take $\la_1=\la_2=\la_3=\zeta$ in \cite[Theorem 2]{S76}. In other words, $K(\la)\prec K(\zeta)$ for any $\la,\zeta\in\BD$. Hence $K(\la)\sim K(\zeta)$ for any $\la,\zeta\in\BD$. Therefore, it suffices to check that $F(0) \prec G(0)$ in order to conclude that $F(\la) \prec G(\la)$ holds for all $\la\in\BD$.

Now take $\cU=\cY=\BC$, $F(\la)=0$ and $G(\la)=\la$, $\la\in\BD$. Then $F(0)=0=G(0)$. Hence $F(0)\sim G(0)$, and thus $F(\la)\sim G(\la)$ for each $\la\in\BD$. However, $L_F$ is a strict contraction and $L_G$ unitary. Hence, by Lemma \ref{L:ConesClasses} we have $L_F\not\sim L_G$, to be precise $L_G\prec L_F$ and $L_F\not\prec L_G$.
\end{proof}

Next we give another example that disproves the implication $F(\la)\prec G(\la)$, $\la\in\BD$ $\Rightarrow$ $F\precI G$.

\begin{example}\label{E:SCex1}
For $\om\in\BD$ and $\al\in[0,2\pi)$, let $\vph_{\om,\al}$ be the automorphism of the open unit disc given by $\vph_{\om,\al}(\la)=e^{-\al}\frac{\om-\la}{1-\ov{\om}\la}$, $\la\in\BD$. Then for each $\la\in\BD$, $|\vph_{\om,\al}(\la)|<1$. Hence, by Lemma \ref{L:ConesClasses}, for all $\om_1,\om_2\in\BD$ and $\al_1,\al_2\in[0,2\pi)$ we have $\vph_{\om_1,\al_1}(\la)\sim \vph_{\om_2,\al_2}(\la)$ for each $\la\in\BD$. Note that $\vph_{\om_1,\al_1}$ and $\vph_{\om_2,\al_2}$ are rational inner functions. Thus $L_{\vph_{\om_1,\al_1}}$ and $L_{\vph_{\om_2,\al_2}}$ are unitary. Then, by Lemma \ref{L:ConesClasses}, it follows that $L_{\vph_{\om_1,\al_1}}\prec L_{\vph_{\om_2,\al_2}}$, or equivalently $\vph_{\om_1,\al_1}\precI \vph_{\om_2,\al_2}$, holds if and only if $\vph_{\om_1,\al_1}=\vph_{\om_2,\al_2}$, i.e., $\om_1=\om_2$ and $\al_1=\al_2$.
\end{example}

In order to reverse the first implication of Corollary \ref{C:imps}, one needs to add a uniformity condition. Indeed, note that $(1-\vep)G+\vep F\in S(\cU,\cY)$, $|\vep|\leq r$, does not only imply $F(\la)\prec G(\la)$ as in (POiv) for any $\la\in\BD$, but also that we can achieve this with the same $r$ for each $\la$. In fact, one easily sees that this is equivalent to $F\precI G$. In Theorem \ref{T:precSC} below we will derive similar variations on (POi)--(POiii) that are equivalent to $F\precI G$. First we extend the functions $K^*$, $D_K$ and $D_{K*}$ to $\BD$ by setting $K^*(\la)=K(\la)^*$, $D_K(\la)=(I-K^*(\la)K(\la))^\half$ and $D_{K^*}(\la)=(I-K(\la)K^*(\la))^\half$ for each $\la\in\BD$.

\begin{lemma}\label{L:eqincl}
Let $K\in S(\cU,\cY)$. Then for all $\la,\zeta\in\BD$ and a.e.\ $e^{it}\in\BT$
\begin{align*}
&\ran D_K(e^{it})\subset \ran D_K(\la)=\ran D_K(\zeta),\\
&\ran D_{K^*}(e^{it})\subset \ran D_{K^*}(\la)=\ran D_{K^*}(\zeta).
\end{align*}
\end{lemma}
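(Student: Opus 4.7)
The plan is to first show that the defect space $\cD_{K(\la)}$ is independent of $\la \in \BD$, and then control the boundary behavior via a Fatou-type limit applied to the common kernel $\ker D_K(\la)$. I read $\ran D_K(\la) = \ran D_K(\zeta)$ as equality of the closed ranges, i.e., of the defect spaces, in line with the framework of Section \ref{S:BallPreOrder}.

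\textbf{Interior.} For $\la,\zeta \in \BD$ I would invoke Theorem 2 of \cite{S76} exactly as in the proof of Corollary \ref{C:imps}: taking the three free disc parameters there equal to $\zeta$ produces an operator $T_{\la,\zeta}$ with $K(\la)-K(\zeta)=D_{K(\zeta)^*}T_{\la,\zeta}D_{K(\zeta)}$, so $K(\la)\prec K(\zeta)$ via (POi). Swapping $\la$ and $\zeta$ gives the reverse, hence $K(\la)\sim K(\zeta)$, and two applications of Corollary \ref{C:directobs}(iv) produce $\cD_{K(\la)} = \cD_{K(\zeta)}$. Denote this common subspace by $\cD$ and its orthogonal complement by $\cN := \ker D_K(\la) = \{u\in\cU \colon \|K(\la)u\| = \|u\|\}$, which is now independent of $\la\in\BD$. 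The starred case follows identically after first invoking Corollary \ref{C:directobs}(i) to obtain $K(\la)^*\sim K(\zeta)^*$.

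\textbf{Boundary.} The desired inclusion $\ran D_K(e^{it})\subset \cD$ a.e.\ is equivalent to $\cN \subset \ker D_K(e^{it})$ a.e. For any $u\in\cN$ the function $\la\mapsto K(\la)u$ lies in $H^2_\cY$, so by the vector-valued Fatou theorem $K(re^{it})u \to K(e^{it})u$ in $\cY$ outside a null set $E_u\subset\BT$. Combined with $\|K(\la)u\|=\|u\|$ for every $\la\in\BD$, this forces $\|K(e^{it})u\|=\|u\|$, i.e., $u\in\ker D_K(e^{it})$, off $E_u$. Fixing a countable dense subset $\{u_j\}\subset\cN$ (possible since $\cU$, and hence $\cN$, is separable) and setting $E=\bigcup_j E_{u_j}$, which is still null, one has $u_j\in\ker D_K(e^{it})$ for every $j$ and every $e^{it}\notin E$; closedness of this kernel then yields $\cN\subset\ker D_K(e^{it})$ by density, so $\ran D_K(e^{it})\subset \oran D_K(e^{it})\subset\cD$ off $E$. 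The $K^*$-chain follows by repeating the argument with $K^*$ in place of $K$.

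The only step requiring a bit of care is the passage from the $u$-dependent exceptional sets $E_u$ to a single common null set $E$; this is the usual separability argument together with closedness of $\ker D_K(e^{it})$, so I do not expect any serious obstacle.
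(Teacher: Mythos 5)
Your argument is correct, but it is considerably more explicit than, and partly different from, the paper's own proof, which consists of two sentences: the interior equality is quoted from Theorem 1 of \cite{S76}, and the boundary inclusion is attributed to the maximum principle. For the interior part you instead route through Theorem 2 of \cite{S76} and the pre-order machinery ($K(\la)\sim K(\zeta)$ plus Corollary \ref{C:directobs}); this is sound, and it can even be upgraded from equality of the closed ranges to equality of the actual ranges: $K(\la)\prec K(\zeta)$ gives $D_{K(\la)}=ND_{K(\zeta)}$, hence $D_{K(\la)}^2\leq\|N\|^2D_{K(\zeta)}^2$, hence $\ran D_{K(\la)}\subset\ran D_{K(\zeta)}$ by Douglas' lemma --- although, as you observe, the closed-range reading is all the paper ever uses, since $\cD_K$ is defined as $\oran D_K(\la)$ immediately after the lemma. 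For the boundary part, your Fatou/radial-limit argument on the common kernel $\cN$ is a legitimate substitute for the maximum principle (which would instead show that $K(\cdot)u$ is constant for $u\in\cN$), and the separability step is handled correctly.

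The one place where the write-up is too quick is the final sentence: ``repeating the argument with $K^*$ in place of $K$'' does not literally work, because $\la\mapsto K(\la)^*$ is co-analytic, so $K(\cdot)^*y$ is not an $H^2_\cU$-function and Fatou's theorem does not apply to it directly. You need either to run your argument on the analytic reflection $\la\mapsto K(\ov{\la})^*\in S(\cY,\cU)$ and then change variables $e^{it}\mapsto e^{-it}$ on $\BT$, or to replace Fatou by the a.e.\ nontangential strong convergence of $K(re^{it})^*$ to $K^*(e^{it})$, a standard fact for Schur class functions which the paper itself quotes from \cite[Section V.2]{NF70} just before Proposition \ref{P:QRbehavior}. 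Either repair is routine, so this is a presentational rather than a mathematical gap.
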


\begin{proof}[\bf Proof.]
The equality of $\ran D_K(\la)$ and $\ran D_K(\zeta)$ follows directly from Theorem 1 of \cite{S76}. Then inclusion $\ran D_{K^*}(e^{it})\subset \ran D_{K^*}(\la)$ is a direct consequence of the maximum principle.
\end{proof}

In view of Lemma \ref{L:eqincl} we set $\cD_K:=\oran D_K(\la)$ and $\cD_K^*:=\oran D_{K^*}(\la)$, where $\la\in\BD$ is chosen arbitrarily. We then have $\oran D_K(e^{it})\subset \cD_K$ and $\oran D_{K^*}(e^{it})\subset \cD_K^*$ for a.e.\ $e^{it}\in\BT$.

\begin{theorem}\label{T:precSC}
Let $F,G\in S(\cU,\cY)$. Then $F\precI G$ if and only if one of the following four equivalent conditions holds:
\begin{itemize}
\item[(SCPOi)]
$F-G=D_{G^*}Q D_{G}$ for some $\cL(\cD_G,\cD_G^*)$-valued bounded function $Q$ on $\BD$;

\item[(SCPOii)]
$I-F^*G=D_{G}R D_{G}$ for some $\cL(\cD_G)$-valued bounded function $R$ on $\BD$;

\item[(SCPOiii)]
there exist $r>0$ such that $(1-\vep)G+\vep F\in\bS(\cU,\cY)$ for all $\vep\in\BC,\,|\vep|\leq r$;

\item[(SCPOiv)]
there exist $r>0$ such that $(1-\vep)G+\vep F\in\bS(\cU,\cY)$ for all $\vep\in\BC,\,|\vep|= r$.
\end{itemize}
Moreover, if $F\precI G$, then the functions $Q$ and $R$ and constant $r>0$ can be chosen in such a way that
\begin{equation}\label{SCbounds}
\begin{aligned}
&\|Q\|_\infty\leq \|R\|_\infty+\sqrt{2\|\re(R)\|_\infty-1},\quad \|R\|_\infty\leq 1+\|Q\|_\infty,\\
&\|Q\|_\infty\leq \frac{2+2\sqrt{r}+1}{2r}, \quad \|R\|_\infty\leq \frac{2+r}{2r},\\
&r\geq \left(\|R\|_\infty+\sqrt{\|R\|_\infty^2+ 2\|\re(R)\|_\infty-1}\right)^{-1}.
\end{aligned}
\end{equation}
\end{theorem}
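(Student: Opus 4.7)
The plan is to reduce Theorem~\ref{T:precSC} to Theorem~\ref{T:prec} applied pointwise on $\BD$, using the maximum principle to connect the analytic-function statement with the Laurent/Toeplitz definition of $\precI$ via Theorem~\ref{T:precLinfty}.

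First, since $F,G\in S(\cU,\cY)$ are analytic, so is $(1-\vep)G+\vep F$ for any $\vep\in\BC$. The maximum principle yields
\[
\|(1-\vep)G+\vep F\|_\infty=\sup_{\la\in\BD}\|(1-\vep)G(\la)+\vep F(\la)\|,
\]
so $(1-\vep)G+\vep F\in L^\infty_1(\cU,\cY)$ if and only if $(1-\vep)G(\la)+\vep F(\la)\in\cL_1(\cU,\cY)$ for every $\la\in\BD$. This shows (SCPOiii)$\Leftrightarrow$(LIPOiii) and (SCPOiv)$\Leftrightarrow$(LIPOiv), and by Theorem~\ref{T:precLinfty} each is equivalent to $F\precI G$; the lower bound on $r$ in \eqref{SCbounds} transfers from Theorem~\ref{T:precLinfty}.

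Second, I would establish the equivalence of (SCPOi), (SCPOii), and (SCPOiii) by applying Theorem~\ref{T:prec} pointwise in $\la\in\BD$. Lemma~\ref{L:eqincl} guarantees that $\oran D_G(\la)=\cD_G$ and $\oran D_{G^*}(\la)=\cD_G^*$ are independent of $\la$, so it is meaningful to ask for a single pair of target spaces $\cL(\cD_G,\cD_G^*)$ and $\cL(\cD_G)$ for $Q(\la)$ and $R(\la)$ uniformly in $\la$. The circular chain is then:
(SCPOi)$\Rightarrow$(SCPOii) via $R(\la)=I-Q(\la)^*G(\la)$, as in the proof of (POi)$\Rightarrow$(POii);
(SCPOii)$\Rightarrow$(SCPOiii) via the polynomial $p(s)=1-2\|R(\la)\|s-(2\|\re R(\la)\|-1)s^2$ from the proof of (POii)$\Rightarrow$(POiii), whose smallest positive root is uniformly bounded below in $\la$ by $(\|R\|_\infty+\sqrt{\|R\|_\infty^2+2\|\re R\|_\infty-1})^{-1}$ provided $\|R\|_\infty,\|\re R\|_\infty<\infty$;
(SCPOiii)$\Rightarrow$(SCPOii)$\Rightarrow$(SCPOi) via the constructive proofs of those implications in Theorem~\ref{T:prec}, which at each $\la$ select $R(\la)$ and $Q(\la)$ satisfying the bounds \eqref{XvsY}--\eqref{rineq} depending only on the uniform $r$ from (SCPOiii). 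The estimates in \eqref{SCbounds} are then obtained by taking suprema over $\la$ of the pointwise bounds in Theorem~\ref{T:prec}(iii).

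The main subtlety is bookkeeping rather than genuine obstruction: one has to verify that the target spaces of $Q$ and $R$ are truly independent of $\la$ (handled by Lemma~\ref{L:eqincl}) and that the bounds from Theorem~\ref{T:prec}(iii) depend only on the uniform quantities $\|Q\|_\infty,\|R\|_\infty,\|\re R\|_\infty,r$, not on extra pointwise data. Because the statements (SCPOi) and (SCPOii) ask only for \emph{bounded} functions $Q,R$ with no measurability or analyticity condition, the pointwise construction from Theorem~\ref{T:prec} suffices and no measurable-selection argument is required; this is what makes the reduction go through cleanly.
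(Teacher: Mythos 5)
Your proposal is correct and follows essentially the same route as the paper: the equivalence of (SCPOiii)/(SCPOiv) with $F\precI G$ is read off from Theorem~\ref{T:precLinfty} (via the identification of the sup norm on $\BD$ with the $L^\infty$ norm on $\BT$), and the remaining equivalences and the bounds \eqref{SCbounds} are obtained by applying Theorem~\ref{T:prec} pointwise in $\la$ and observing that the estimates of part (iii) of that theorem depend only on the uniform quantities $\|Q\|_\infty$, $\|R\|_\infty$, $\|\re(R)\|_\infty$ and $r$, so that suprema over $\la$ can be taken. Your explicit remarks on the $\la$-independence of the defect spaces (Lemma~\ref{L:eqincl}) and on the absence of any measurability requirement match the implicit bookkeeping in the paper's argument.
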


\begin{proof}[\bf Proof.]
That statements (SCPOiii) and (SCPOiv) are mutually equivalent, and equivalent to $F\precI G$ is obvious from Theorem \ref{T:precLinfty}. Hence, to complete the proof, it suffices to prove that (SCPOi), (SCPOii) and (SCPOiii) are equivalent, along with the bounds on $\|Q\|_\infty$, $\|R\|_\infty$ and $r$.

Assume (SCPOiii) holds. Then $F(\la)\prec G(\la)$ holds for each $\la\in\BD$. Therefore, by Theorem \ref{T:prec}, that there exist functions $Q$ and $R$ on $\BD$, with values in $\cL(\cD_G,\cD_{G}^*)$, respectively $\cL(\cD_G)$, such that $F-G= D_{G^*}Q D_{G}$ and $I-F^*G=D_{G}R D_{G}$, simply let the value of $Q$ and $R$ at $\la\in\BD$ be the operators $X$ and $Y$ obtained from (POi) and (POii), respectively, that establish $F(\la)\prec G(\la)$. It remains to show that $Q$ and $R$ are bounded. However, since for each $\la\in\BD$ in (POiii) we can take the same value for $r$, it follows that $\|Q(\la)\|\leq\frac{2+2\sqrt{r}+1}{2r}$ and $\|R(\la)\|\leq \frac{2+r}{2r}$ for each $\la\in\BD$. Hence $Q$ and $R$ are bounded, and the bounds on $\|Q\|_\infty$ and $\|R\|_\infty$ apply.

Assume (SCPOi) holds. Again via the observation that $F(\la)\prec Q(\la)$, $\la\in\BD$, one defines a $\cL(\cD_G)$-valued function $R$ on $\BD$ such that $I-F^*G=D_{G}R D_{G}$, and it remains to show that $R$ is bounded on $\BD$. This, however, follows directly from the second bound of part (iii) of Theorem \ref{T:prec}: For each $\la\in\BD$
\[
\|R(\la)\|\leq 1+\|Q(\la)\|\leq 1+\|Q\|_\infty,\quad \mbox{hence}\quad \|R\|_\infty\leq 1+\|Q\|_\infty.
\]
In a similar fashion one proves that (SCPOii) implies (SCPOi), by deriving the bound on $\|Q\|_\infty$ in terms of $\|R\|_\infty$.

Finally, assume (SCPOii) holds. Then, $F(\la)\prec G(\la)$, for each $\la\in\BD$, with $Y=R(\la)$. Hence, by part (ii) of Theorem \ref{T:prec}, $2\re( R(\la))\geq I$, which shows that $2\|\re(R)\|_\infty-1 \geq 2\|\re{R(\la)}\|-1\geq0$ and $\|R\|_\infty\geq \|\re(R)\|_\infty\geq \half$. Hence there exists an $r_\la>0$ such that $\|(1-\vep)G(\la)+\vep F(\la)\|\leq 0$ for each $|\vep|\leq r_\la$ and
\begin{align*}
r_\la
&\geq \frac{1}{\|R(\la)\|+\sqrt{\|R(\la)\|^2+2\|\re(R(\la))\|-1}}\\
&\geq  \frac{1}{\|R\|_\infty+\sqrt{\|R\|_\infty^2+2\|\re(R)\|_\infty-1}}.
\end{align*}
Since the right hand side in the last inequality is positive and independent of $\la$, we can take this as the value for $r$ in (SCPOiii) and (SCPOiv) and it is immediate that  the lower bound on $r$ in \eqref{SCbounds} holds.
\end{proof}

Since $F\precI G$ is equivalent to $L_F\prec L_G$, the next results follow immediately from Corollary \ref{C:directobs} and Lemma \ref{L:ConesClasses}.

\begin{corollary}\label{C:directobs2}
\begin{itemize}
\item[(i)]
If $F,G\in L^\infty_1(\cU,\cY)$, $F\precI G$ and $H\in L^\infty_1(\cY,\cZ)$, $K\in L^\infty_1(\cX,\cU)$, then $HFK\precI HGK$;

\item[(ii)]
the sets $L^\infty_{<1}(\cU,\cY)$ and $S_0(\cU,\cY)$ form equivalence classes with respect to the pre-order $\precI$; if $F\in L^\infty_1(\cU,\cY)$, resp.\ $F\in S(\cU,\cY)$, and $G\in  L^\infty_{<1}(\cU,\cY)$, resp.\ $G\in  S_0(\cU,\cY)$, then $F\precI G$.

\item[(iii)]
If $G\in S(\cU,\cY)$ is inner, then the set $\{G\}$ forms an equivalence class
and $F\precI G$ implies $F=G$ for any $F\in S(\cU,\cY)$.

\end{itemize}
\end{corollary}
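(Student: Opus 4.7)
The plan is to transfer each claim through the identities $F \precI G \Leftrightarrow L_F \prec L_G$ and (by the remark at the start of Section \ref{S:LaurentPreOrder}) $L_K \prec L_J \Leftrightarrow T_K \prec T_J$, so that Corollary \ref{C:directobs} and Lemma \ref{L:ConesClasses} apply directly. The norm identities $\|K\|_\infty = \|L_K\| = \|T_K\|$ do the rest of the work: contractivity, strict contractivity, and isometric/coisometric behavior of symbols translate into the corresponding properties of Laurent and Toeplitz operators.

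For part (i), I would use that multiplication of $L^\infty$ symbols corresponds to composition of Laurent operators, i.e., $L_{HFK} = L_H L_F L_K$ and $L_{HGK} = L_H L_G L_K$. Since $H, K$ are contractive, $L_H$ and $L_K$ are contractions. Applying Corollary \ref{C:directobs}(ii) to $L_F \prec L_G$ with left multiplier $L_H$ and right multiplier $L_K$ yields $L_{HFK} \prec L_{HGK}$, which reads as $HFK \precI HGK$.

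For part (ii), I would observe that $F \in L^\infty_{<1}(\cU,\cY)$ iff $L_F$ is a strict contraction, and $F \in S_0(\cU,\cY)$ iff $T_F$ is a strict contraction. Lemma \ref{L:ConesClasses} then gives that the strict contractions in $\cL_1(L^2_\cU,L^2_\cY)$ (resp.\ $\cL_1(H^2_\cU,H^2_\cY)$) form a single $\sim$-equivalence class $\cL_1^\circ$ whose $\wedge$-cone is all of $\cL_1$. Pulling this back to symbols proves that $L^\infty_{<1}(\cU,\cY)$ and $S_0(\cU,\cY)$ are single $\simI$-equivalence classes and that every element of $L^\infty_1$ (resp.\ $S$) is $\precI$-dominated by every element of $L^\infty_{<1}$ (resp.\ $S_0$). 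Maximality of these classes, i.e., that nothing outside $L^\infty_{<1}$ (resp.\ $S_0$) is $\simI$-equivalent to something inside, follows from the same Lemma, since $[L_G]=\cL_1^\circ$ forces $L_F$ to be a strict contraction whenever $L_F \sim L_G$ with $\|G\|_\infty<1$.

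For part (iii), if $G \in S(\cU,\cY)$ is inner then $T_G$ is an isometry by the characterization recalled at the end of the Introduction. Lemma \ref{L:ConesClasses} then yields $\wedge(T_G) = \{T_G\}$, so any $F \in S(\cU,\cY)$ with $F \precI G$ must satisfy $T_F = T_G$ and hence $F = G$. Since $[T_G] \subset \wedge(T_G) = \{T_G\}$, the set $\{G\}$ is a full $\simI$-equivalence class. No genuine obstacle arises in any of the three parts; the only point requiring attention is to select the right operator-valued avatar ($L$ or $T$) for each statement so that the cited results apply verbatim.
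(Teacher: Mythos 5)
Your proposal is correct and follows essentially the same route as the paper, which simply notes that since $F\precI G$ is by definition equivalent to $L_F\prec L_G$ (equivalently $T_F\prec T_G$), all three parts follow immediately from Corollary \ref{C:directobs} and Lemma \ref{L:ConesClasses}. Your write-up merely spells out the transfer between symbols and Laurent/Toeplitz operators in more detail, which is fine.
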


Part (iii) of Corollary \ref{C:directobs}, in the context of the pre-order $\precI$ on $S(\cU,\cY)$ can be rephrased in terms of the associated de Branges-Rovnyak spaces. Recall that for a given Schur class function $K\in S(\cU,\cY)$, the associated de Branges-Rovnyak space, denoted by $\cH(K)$, is equal to $\ran D_{T_K^*}\subset H^2_\cU$ with the lifted norm $\|D_{T_K^*} h\|_{\cH(K)}=\| h\|_{H^2_\cU}$, $h\in\cD_{T_K^*}$.

\begin{corollary}\label{C:dBRconnect}
Let $F,G\in S(\cU,\cY)$ such that $F\precI G$. Then $\cH(F)\subset \cH(G)$, as subsets of $H^2_\cU$, and there exists an isometry mapping $\cH(F)$ into $\cH(G)$.
\end{corollary}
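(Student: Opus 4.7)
The plan is to derive both conclusions by passing to the adjoints and applying the structural results on the pre-order $\prec$ from Section \ref{S:BallPreOrder}. By Corollary \ref{C:directobs}(i), $F\precI G$ (that is, $T_F\prec T_G$) gives $T_F^*\prec T_G^*$. This is the right starting point because the defect operators $D_{T_F^*}$ and $D_{T_G^*}$ used to build $\cH(F)$ and $\cH(G)$ are exactly the ones governed by the pre-order on the adjoints.

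For the set inclusion, I would apply (POii) of Theorem \ref{T:prec} to $T_F^*\prec T_G^*$ to obtain $Y_*\in\cL(\cD_{T_G^*})$ with $I-T_FT_G^*=D_{T_G^*}Y_*D_{T_G^*}$. Substituting into identity \eqref{ReYpos} (with $A=T_F^*$, $B=T_G^*$) yields
\[
D_{T_G^*}\bigl(2\re(Y_*)-I\bigr)D_{T_G^*}=D_{T_F^*}^2+(T_F-T_G)(T_F-T_G)^*,
\]
and, using Theorem \ref{T:prec}(ii) for positivity of the parenthesized factor, this gives $D_{T_F^*}^2\le c^2\, D_{T_G^*}^2$ with $c^2=\|2\re(Y_*)-I\|$. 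Douglas' lemma then produces $\ran D_{T_F^*}\subset \ran D_{T_G^*}$, i.e.\ $\cH(F)\subset \cH(G)$ as subsets of $H^2_\cY$.

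To construct the isometry, I would next invoke Corollary \ref{C:directobs}(iv) for $T_F^*\prec T_G^*$, giving $\cD_{T_F^*}\subset \cD_{T_G^*}$. Each $f\in\cH(F)$ has a unique representation $f=D_{T_F^*}h$ with $h\in\cD_{T_F^*}$, and by definition $\|f\|_{\cH(F)}=\|h\|_{H^2_\cY}$. Define $V\colon \cH(F)\to \cH(G)$ by $V(f)=D_{T_G^*}h$. Because $h$ already lies in $\cD_{T_G^*}$, it is the minimum-norm preimage under $D_{T_G^*}$, so $V(f)\in \cH(G)$ with lifted norm $\|V(f)\|_{\cH(G)}=\|h\|_{H^2_\cY}=\|f\|_{\cH(F)}$. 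Hence $V$ is the desired isometry.

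The one conceptual point worth flagging is that $V$ is \emph{not} the inclusion map: the inclusion $\cH(F)\hookrightarrow \cH(G)$ is merely continuous, with norm bounded by the constant $c$ produced above, and it will typically fail to be isometric (for instance, take $F=\un{0}$ and $G$ any nonzero element of $S_0(\cU,\cY)$). The map $V$ instead transfers the parameter vector $h$ between the two defect-operator parametrizations, and the inclusion $\cD_{T_F^*}\subset \cD_{T_G^*}$ is exactly what forces $V(f)\in\cH(G)$ with the same norm.
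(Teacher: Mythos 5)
Your proof is correct and takes essentially the same approach as the paper: the inclusion comes from the factorization $D_{T_F^*}=XD_{T_G^*}$ supplied by Corollary \ref{C:directobs} applied to $T_F^*\prec T_G^*$ (your detour through \eqref{ReYpos} and Douglas' lemma just re-derives that factorization), and your isometry $D_{T_F^*}h\mapsto D_{T_G^*}h$, $h\in\cD_{T_F^*}$, is precisely the map the paper obtains as $P_{\wtil{\cH}}X^*$, only written down directly via the defect-space parametrization instead of as a Hilbert-space adjoint. Your closing remark that this isometry is not the inclusion map is apt, and your placement of $\cH(K)$ inside $H^2_\cY$ (rather than $H^2_\cU$) is the consistent reading of the paper's definition.
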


\begin{proof}[\bf Proof.]
Since $F\precI G$ we have $T_F\prec T_G$ in $\cL_1(H^2_\cU,H^2_\cY)$. By part (i) of Corollary \ref{C:directobs} we have $T_F^*\prec T_G^*$, and thus, by part (iii) of the same corollary, there exists a bounded operator $X$ such that $X D_{T_G^*}=D_{T_F^*}$, i.e., $D_{T_G^*}X^*=D_{T_F^*}$. The latter identity shows $\ran D_{T_F^*}\subset \ran D_{T_G^*}$, i.e., $\cH(F)\subset \cH(G)$, as subsets of $H^2_\cU$. Moreover, set $\wtil{\cH}_0=D_{T_G^*}(\cD_{T_F^*})$. Then $\wtil{\cH}_0$ is a (not necessarily closed) linear manifold in $\cH(G)$. Write $\wtil{\cH}$ for the closure of $\wtil{\cH}_0$ in $\cH(G)$. For any $k=D_{T_G^*}h\in\wtil{\cH}_0$, $h\in\cD_{T_F^*}$, we have
\[
\|X k\|_{\cH(G)}=\|XD_{T_G^*}h\|_{\cH(F)}=\|D_{T_F^*}h\|_{\cH(F)}=\|h\|_{H^2_\cU} =\|D_{T_G^*}h\|_{\cH(G)}=\|k\|_{\cH(G)}.
\]
It follows that $X$ maps $\wtil{\cH}$ isometrically onto $\cH(F)$, and hence $P_{\wtil{\cH}}X^*$ maps $\cH(F)$ isometrically into $\cH(G)$, where adjoint is taken viewing $X$ as a map from $\cH(G)$ to $\cH(F)$.
\end{proof}

In case $G\in S(\cU,\cY)$ is not inner but does attain norm one at some point of $\ov{\BD}$, then $F\precI G$ still implies $F$ shows similar behavior at this point in the direction where $G$ attains norm one. This is a consequence of Proposition \ref{P:limbehavior} below. We first prove the following lemma.

\begin{lemma}\label{L:limbehavior}
Let $F,G\in S(\cU,\cY)$ such that $F\precI G$ and let $Q$ be as in (SCPOi). Then for any $u\in\cU$ and $\la\in\BD$ we have
\[
\|G(\la)u-F(\la)u\| \leq\|Q\|_\infty \sqrt{2\|u\|(\|u\|-\|G(\la)u\|)}.
\]
\end{lemma}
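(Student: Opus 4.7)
The plan is to evaluate the identity from (SCPOi) pointwise at $\la\in\BD$ and then use two elementary norm bounds. Fix $\la\in\BD$ and $u\in\cU$, and write $A=F(\la)$, $B=G(\la)$, $X=Q(\la)$. By (SCPOi) we have $A-B=D_{B^*}X D_B$, and of course $\|X\|\leq\|Q\|_\infty$.

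First I would apply $A-B$ to $u$ and take norms, using that $D_{B^*}$ is a contraction (as $B\in\cL_1(\cU,\cY)$) to obtain
\[
\|F(\la)u-G(\la)u\|=\|D_{B^*}X D_B u\|\leq \|D_{B^*}\|\,\|X\|\,\|D_B u\|\leq \|Q\|_\infty\,\|D_B u\|.
\]
Next I would evaluate $\|D_B u\|^2$ via the definition $D_B^2=I-B^*B$, which gives
\[
\|D_B u\|^2=\inn{(I-B^*B)u}{u}=\|u\|^2-\|Bu\|^2=(\|u\|-\|Bu\|)(\|u\|+\|Bu\|).
\]
Finally, since $B$ is a contraction we have $\|Bu\|\leq \|u\|$, so $\|u\|+\|Bu\|\leq 2\|u\|$. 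Combining the two displays yields
\[
\|F(\la)u-G(\la)u\|\leq \|Q\|_\infty\sqrt{2\|u\|(\|u\|-\|G(\la)u\|)},
\]
which is the asserted inequality.

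There is no real obstacle here: the lemma is essentially a pointwise Cauchy--Schwarz-type estimate made possible by the factorization in (SCPOi). The only mild point to keep track of is the convention, recorded in the notation section, that $D_{G^*}(\la)$ may be regarded as an operator on $\cY$ (so that the contractivity bound $\|D_{G(\la)^*}\|\leq 1$ is legitimate), and that $Q(\la)$ is well defined as an operator from $\cD_{G}$ to $\cD_{G}^*$ with operator norm bounded by $\|Q\|_\infty$ uniformly in $\la$.
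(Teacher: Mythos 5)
Your proposal is correct and follows essentially the same route as the paper's proof: factor $F(\la)-G(\la)$ via (SCPOi), use contractivity of $D_{G(\la)^*}$ and the uniform bound $\|Q(\la)\|\leq\|Q\|_\infty$, and estimate $\|D_{G(\la)}u\|^2=(\|u\|-\|G(\la)u\|)(\|u\|+\|G(\la)u\|)\leq 2\|u\|(\|u\|-\|G(\la)u\|)$. There is nothing to add.
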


\begin{proof}[\bf Proof.]
First note that
\begin{align*}
\|D_{G}(\la)u\|^2
&=\|u\|^2-\|G(\la)u\|^2=(\|u\|-\|G(\la)u\|)(\|u\|+\|G(\la)u\|)\\
&\leq 2\|u\|(\|u\|-\|G(\la)u\|).
\end{align*}
Hence
\begin{align*}
\|G(\la)u-F(\la)u\|
&=\|D_{G^*}(\la)Q(\la)D_{G}(\la)u\|\leq  \|D_{G^*}(\la)Q(\la)\|\, \|D_{G}(\la)u\|\\
&\leq \|Q(\la)\|\,\sqrt{\|D_{G}(\la)u\|^2}
\leq\|Q\|_\infty \sqrt{2\|u\|(\|u\|-\|G(\la)u\|)},
\end{align*}
and our claim follows.
\end{proof}

\begin{proposition}\label{P:limbehavior}
Let $F,G\in S(\cU,\cY)$ such that $F\precI G$ and let $u\in\cU$ and $y\in\cY$ with $\|u\|=\|y\|$. Let $t\mapsto \la_t$, $t\in(0,1]$ be a continuous curve in $\ov{\BD}$ with $\la_t\in\BD$ whenever $t\in(0,1)$. Then the following statements holds:
\begin{itemize}
\item[(i)]
If $\lim_{t\uparrow 1} G(\la_t)u=y$, then $\lim_{t\uparrow 1} F(\la_t)u=y$.

\item[(ii)]
If $\lim_{t\uparrow 1}\|G(\la_t)u\|=\|u\|$, then $\lim_{t\uparrow 1}\|F(\la_t)u\|=\|u\|$.

\item[(iii)]
If $\lim_{t\uparrow 1}\|G(\la_t)\|=1$, then $\lim_{t\uparrow 1}\|F(\la_t)\|=1$.
\end{itemize}
In particular, if $\beta\in\BT$ and $\lim_{\la\to\beta}G(\la)x=y$ nontangentially (respectively unrestrictedly), then $\lim_{\la\to\beta}F(\la)x=y$ nontangentially (respectively unrestrictedly).
\end{proposition}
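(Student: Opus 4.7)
The plan is to deduce all four assertions from the pointwise bound established in Lemma~\ref{L:limbehavior}. Since $F\precI G$, Theorem~\ref{T:precSC} produces a bounded $\cL(\cD_G,\cD_G^*)$-valued function $Q$ on $\BD$ satisfying $F-G=D_{G^*}Q D_G$, and we set $C:=\|Q\|_\infty$. The lemma then supplies, for every $u\in\cU$ and $\la\in\BD$,
\[
\|G(\la)u-F(\la)u\|\leq C\sqrt{2\|u\|\bigl(\|u\|-\|G(\la)u\|\bigr)},
\]
so that each part reduces to driving $\|u\|-\|G(\la_t)u\|$ to zero by exploiting the respective hypothesis.

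For (i), the assumption $G(\la_t)u\to y$ together with $\|y\|=\|u\|$ yields $\|G(\la_t)u\|\to\|u\|$, whence the estimate forces $F(\la_t)u-G(\la_t)u\to 0$; combined with $G(\la_t)u\to y$ this gives $F(\la_t)u\to y$. For (ii), the hypothesis is precisely that the right-hand side of the estimate vanishes, so $\|F(\la_t)u-G(\la_t)u\|\to 0$, and the reverse triangle inequality produces $\|F(\la_t)u\|\to\|u\|$.

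Part (iii) is the main obstacle: the operator norm $\|G(\la_t)\|$ need not be attained by a single vector, so one has to carefully select approximate maximizers. The strategy is to pick, for each $t<1$, a unit vector $u_t\in\cU$ with $\|G(\la_t)u_t\|\geq\|G(\la_t)\|-(1-t)$, which is always possible by definition of the operator norm; combined with $\|G(\la_t)\|\to 1$ this yields $\|G(\la_t)u_t\|\to 1$. Inserting this $u_t$ into the estimate and applying the reverse triangle inequality gives
\[
\|F(\la_t)\|\geq\|F(\la_t)u_t\|\geq \|G(\la_t)u_t\|-C\sqrt{2\bigl(1-\|G(\la_t)u_t\|\bigr)}\longrightarrow 1,
\]
and the trivial $\|F(\la_t)\|\leq 1$ pins the limit to $1$.

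The final "in particular" clause is a direct consequence of (i). A nontangential (respectively unrestricted) limit $\lim_{\la\to\beta}G(\la)x=y$ amounts, by definition, to the convergence $G(\la_t)x\to y$ along every continuous curve $t\mapsto\la_t\in\BD$ approaching $\beta$ within a fixed Stolz angle (respectively arbitrarily within $\BD$). Because the standing hypothesis $\|x\|=\|y\|$ remains in force, (i) applied to each such parametrization transfers the limit from $G$ to $F$, yielding the corresponding nontangential (respectively unrestricted) limit for $F$ at $\beta$.
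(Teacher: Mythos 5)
Your proof is correct and follows essentially the same route as the paper: all four assertions are deduced from the pointwise estimate of Lemma~\ref{L:limbehavior}, with (iii) handled by choosing approximate maximizers $u_t$ (your tolerance $1-t$ replaces the paper's explicit $\vep$--$\de$ choice, but the argument is the same). No gaps.
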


\begin{proof}[\bf Proof.]
Claim (i) follows directly from Lemma \ref{L:limbehavior}. In order to see that (ii) holds, note that for each $\la\in\BD$
\begin{align*}
\|u\|-\|F(\la)u\|
&=\|u\|-\|G(\la)u\|+\|G(\la)u\|-\|F(\la)u\|\\
&\leq \|u\|-\|G(\la)u\|+|\,\|G(\la)u\|-\|F(\la)u\|\,|\\
&\leq\|u\|-\|G(\la)u\|+||G(\la)u-F(\la)u\|\\
&\leq \|u\|-\|G(\la)u\|+ \|Q\|_\infty \sqrt{2\|u\|(\|u\|-\|G(\la)u\|)}.
\end{align*}
Hence, replacing $\la$ with $\la_t$, we see that $\lim_{t\uparrow 1}\|G(\la_t)u\|=\|u\|$ implies that $\lim_{t\uparrow 1}\|F(\la_t)u\|=\|u\|$.

Finally, assume $\lim_{t\uparrow 1}\|G(\la_t)\|=1$. Let $\vep>0$. Define $\de=\|Q\|_\infty+\vep-\|Q\|_\infty\sqrt{\|Q\|_\infty+2\vep}$, and note that $\de=\half(\|Q\|_\infty-\sqrt{\|Q\|_\infty^2+2\vep})^2$ is the positive solution of $\de+\|Q\|_\infty\sqrt{2\de}=\vep$. Since $\lim_{t\uparrow 1}\|G(\la_t)\|=1$ and $\de>0$, there exists a $t_\de\in(0,1)$ such that $1-\|G(\la_t)\|=|1-\|G(\la_t)\|\,|<\de$ for all $t\in(t_\de,1)$. Hence for each $t\in(t_\de,1)$ there exists a $u_t\in\cU$, $\|u_t\|=1$ such that $\|u_t\|-\|G(\la_t)u_t\|<\de$, and thus
\begin{align*}
|1-\|F(\la_t)u_t\|\,|
&=\|u_t\|-\|F(\la_t)u_t\|\\
&\leq \|u_t\|-\|G(\la_t)u_t\|+ \|Q\|_\infty \sqrt{2\|u_t\|(\|u\|-\|G(\la_t)u_t\|)}\\
&<\de+\|Q\|_\infty\sqrt{2\de}=\vep.
\end{align*}
Therefore $1\geq \|F(\la_t)\|>1-\vep$ for each $t\in(t_\de,1)$. Hence $\lim_{t\uparrow 1}\|F(\la_t)\|=1$.
\end{proof}

Applying Theorem \ref{T:eqrel} along with arguments similar to those in the proofs of Theorems \ref{T:precLinfty} and \ref{T:precSC} one obtains the following characterizations of $\simI$. Details are omitted.

\begin{theorem}\label{T:eqrelSC}
Let $F,G\in L^\infty_1(\cU,\cY)$. Then $F\simI G$ if and only if one of the following equivalent statement holds:
\begin{itemize}
\item[(LIERi)] $ F-G=D_{F^*}\wtilQ D_G
\mbox{ a.e.\ on $\BT$ for some } \wtilQ\in L^\infty(\cU,\cY)$;

\item[(LIERii)] $I-F^*G=D_F \wtilR D_G
\mbox{ a.e.\ on $\BT$ for some }\wtilR\in L^\infty(\cU,\cU)$.

\end{itemize}
Moreover, if $F\simI G$ holds with $Q$ and $R$ the $L^\infty$-functions from (LIPOi), respectively (LIPOii), relating to $F\precI G$ and $Q'$ and $R'$ the $L^\infty$-functions from (LIPOi), respectively (LIPOii), relating to $G\precI F$, then $\wtilQ$ and $\wtilR$ can be chosen in such a way that
\begin{equation}\label{tilQtilRbounds}
\|\wtilQ\|_\infty\leq \|Q\|_\infty\sqrt{2\|Q'\|_\infty+1}\ \ \mbox{and}\ \
\|\wtilR\|_\infty\leq \|R\|_\infty\sqrt{2\|\re(R')\|_\infty-1}.
\end{equation}
Additional bounds on $\|\wtilQ\|$ and $\|\wtilR\|$ are obtained by replacing the roles of $Q$ and $Q'$, and $R$ and $R'$. Furthermore, in case $F,G\in S(\cU,\cY)$, then $F\simI G$ is equivalent to (LIERi) and (LIERii) with $\wtilQ$, respectively $\wtilR$, bounded functions on $\BD$ and a.e.\ on $\BT$  and the bounds on $\|\wtilQ\|_\infty$ and $\|\wtilR\|_\infty$ in \eqref{tilQtilRbounds} remain to hold.
\end{theorem}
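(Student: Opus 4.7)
The proof follows the pattern of Theorems \ref{T:precLinfty} and \ref{T:precSC}: Theorem \ref{T:eqrel} provides the operator-theoretic content while Proposition \ref{P:CLTapp} transfers factorizations between Laurent operators and their $L^\infty$ symbols. First I would note that $F\simI G$ is, by definition, $L_F \sim L_G$ in $\cL_1(L^2_\cU, L^2_\cY)$. Invoking (ERi) and (ERii) of Theorem \ref{T:eqrel} together with the identities $L_K^* = L_{K^*}$ and $D_{L_K} = L_{D_K}$ produces bounded operators $\wtilX, \wtilY$ satisfying $L_{F-G} = L_{D_{F^*}} \wtilX L_{D_G}$ and $L_{I-F^*G} = L_{D_F} \wtilY L_{D_G}$. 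Applying Proposition \ref{P:CLTapp} to each identity (with $M = D_{F^*}$, $N = D_G$, $P = F-G$ in the first, and $M = D_F$, $N = D_G$, $P = I-F^*G$ in the second) replaces $\wtilX, \wtilY$ by Laurent operators $L_{\wtilQ}, L_{\wtilR}$ of the same operator norm, and comparing symbols yields (LIERi) and (LIERii) a.e.\ on $\BT$. The converse directions are immediate, since applying $K \mapsto L_K$ to either (LIERi) or (LIERii) recovers (ERi) or (ERii), and hence $L_F \sim L_G$.

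For the norm bounds in \eqref{tilQtilRbounds}, let $Q, Q', R, R'$ be the $L^\infty$-symbols supplied by Theorem \ref{T:precLinfty} for $F\precI G$ and $G\precI F$. The corresponding Laurent operators $X = L_Q$, $X' = L_{Q'}$, $Y = L_R$, $Y' = L_{R'}$ realize (POi) and (POii) for both $L_F \prec L_G$ and $L_G \prec L_F$, and Theorem \ref{T:eqrel}(iii) gives $\|\wtilX\| \leq \|X\|\sqrt{2\|X'\|+1}$ and $\|\wtilY\| \leq \|Y\|\sqrt{2\|\re(Y')\|-1}$; these transfer to \eqref{tilQtilRbounds} via $\|L_K\| = \|K\|_\infty$, and swapping the roles of $X, X'$ (resp.\ $Y, Y'$) yields the additional bounds. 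For the Schur class refinement I would argue pointwise, as in the proof of Theorem \ref{T:precSC}: given $F, G \in S(\cU,\cY)$ with $F\simI G$, let $Q, R$ and $Q', R'$ now be the bounded operator-valued functions on $\BD$ from Theorem \ref{T:precSC} for the two pre-order directions. Applying Part I of the proof of Theorem \ref{T:eqrel} pointwise to $F(\la), G(\la)$ yields operator-valued functions $\wtilQ, \wtilR$ on $\BD$ satisfying the pointwise versions of (LIERi) and (LIERii), with norms at $\la$ bounded by $\|Q(\la)\|\sqrt{2\|Q'(\la)\|+1}$ and $\|R(\la)\|\sqrt{2\|\re(R'(\la))\|-1}$; the uniform $L^\infty$-bounds on $Q, Q', R, R'$ yield bounded $\wtilQ, \wtilR$ on $\BD$ meeting \eqref{tilQtilRbounds}, and their boundary behavior is governed by the $L^\infty$ case already handled.

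The main technical obstacle will be the bookkeeping in the Schur-class step: the intertwiners supplied by Corollary \ref{C:directobs}(iii) in the construction of $\wtilQ(\la)$ and $\wtilR(\la)$ depend on $\la$, and one must verify that they can be chosen so that the resulting functions take values in the uniform ambient defect spaces $\cD_G$, $\cD_G^*$, $\cD_F$ identified via Lemma \ref{L:eqincl}, rather than in a $\la$-dependent closure. Keeping the range inclusions coherent while simultaneously maintaining the uniform norm control is where the careful pointwise tracking is required, and it is precisely the finiteness of $\|Q\|_\infty$, $\|Q'\|_\infty$, $\|R\|_\infty$ and $\|\re(R')\|_\infty$ that makes this feasible.
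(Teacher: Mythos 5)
Your proposal is correct and follows exactly the route the paper intends: the paper omits the proof, stating only that it follows by ``applying Theorem \ref{T:eqrel} along with arguments similar to those in the proofs of Theorems \ref{T:precLinfty} and \ref{T:precSC}'', which is precisely your combination of Theorem \ref{T:eqrel}, Proposition \ref{P:CLTapp} for the $L^\infty$ statements, and the pointwise-on-$\BD$ argument with uniform norm control for the Schur class refinement. The technical point you flag about $\la$-dependent intertwiners taking values in the uniform defect spaces is handled the same way in the paper's proof of Proposition \ref{P:QRbehavior}, via Lemma \ref{L:eqincl}.
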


In the remainder of this section we focus on properties of the functions $Q$ and $R$ and $\wtilQ$ and $\wtilR$ in Theorems \ref{T:precSC} and \ref{T:eqrelSC}, respectively. That these functions can be chosen to be $L^\infty$-functions on $\BT$ was already observed in Theorems \ref{T:precLinfty} and \ref{T:precSC}. We now consider their behavior on $\BD$, just for the case that $\cU$ and $\cY$ are finite dimensional.

\begin{proposition}\label{P:QRbehavior}
Let $F,G\in S(\cU,\cY)$ with $\cU$ and $\cY$ finite dimensional. Assume $F\precI G$ (respectively $F\simI G$). Then the functions $Q$ and $R$ in (SCPOi) and (SCPOii) (resp.\ $\wtilQ$ and $\wtilR$ in (LIERi) and (LIERii)) can be chosen to be continuous on $\BD$. Moreover, assume the nontangential limits of both $F$ and $G$ exist in $e^{it}\in\BT$. Then $R(e^{it})$ can be defined in line with (LIPOii) and such that $\lim_{z\to e^{it}}P_{\cD_{G(e^{it})}}R(z)P_{\cD_{G(e^{it})}}=R(e^{it})$ as $z\in\BD$ converges to $e^{it}$ nontangentially.
\end{proposition}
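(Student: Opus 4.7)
The plan rests on two observations. First, because $\cU$ and $\cY$ are finite dimensional, Lemma~\ref{L:eqincl} forces $\cD_G=\ran D_G(\la)$ and $\cD_G^*=\ran D_{G^*}(\la)$ to be fixed, closed subspaces independent of $\la\in\BD$. Self-adjointness of $D_G(\la)$ on $\cU$ gives $\ker D_G(\la)=\cD_G^\perp$, so the restriction $D_G(\la)|_{\cD_G}\colon\cD_G\to\cD_G$ is a bijection for every $\la\in\BD$ that depends continuously on $\la$; pointwise continuity of its inverse follows. The same holds for $D_{G^*}(\la)|_{\cD_G^*}$, and, in the equivalence case, for $D_F(\la)|_{\cD_F}$ and $D_{F^*}(\la)|_{\cD_F^*}$.

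Second, (SCPOi) and (SCPOii) force $F(\la)-G(\la)$ to map $\cU$ into $\cD_G^*$ and to vanish on $\cD_G^\perp$, and $I-F(\la)^*G(\la)$ to map $\cU$ into $\cD_G$ and to vanish on $\cD_G^\perp$. I would therefore write down the explicit formulas
\begin{align*}
Q(\la)&=\bigl(D_{G^*}(\la)|_{\cD_G^*}\bigr)^{-1}\,(F(\la)-G(\la))|_{\cD_G}\,\bigl(D_G(\la)|_{\cD_G}\bigr)^{-1},\\
R(\la)&=\bigl(D_G(\la)|_{\cD_G}\bigr)^{-1}\,(I-F(\la)^*G(\la))|_{\cD_G}\,\bigl(D_G(\la)|_{\cD_G}\bigr)^{-1},
\end{align*}
verify directly that they recover $F-G=D_{G^*}QD_G$ and $I-F^*G=D_GRD_G$, and conclude continuity on $\BD$ from the first paragraph. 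For the equivalence case, combining $F\precI G$ with $G\precI F$ and using $(I-F^*G)^*=I-G^*F$ shows $\ran(F-G)\subset\cD_F^*$ and $\ran(I-F^*G)\subset\cD_F$, so replacing the outer left defect operators by $(D_{F^*}(\la)|_{\cD_F^*})^{-1}$ and $(D_F(\la)|_{\cD_F})^{-1}$ in the above formulas yields continuous $\wtilQ$ and $\wtilR$ meeting (LIERi) and (LIERii).

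For the boundary statement, I would first deduce $F(e^{it})\prec G(e^{it})$: by (SCPOiii) there is a uniform $r>0$ such that $(1-\vep)G(\la)+\vep F(\la)\in\cL_1(\cU,\cY)$ for all $\la\in\BD$ and $|\vep|\leq r$; taking the nontangential limit $\la\to e^{it}$ (which, in finite dimensions, is a norm limit) preserves this inequality, giving $F(e^{it})\prec G(e^{it})$ via (POiii). By Theorem~\ref{T:prec} I can then pick the unique $R(e^{it})\in\cL(\cU)$ supported on $\cD_{G(e^{it})}$ with $I-F(e^{it})^*G(e^{it})=D_{G(e^{it})}R(e^{it})D_{G(e^{it})}$. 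To obtain $P_{\cD_{G(e^{it})}}R(z)P_{\cD_{G(e^{it})}}\to R(e^{it})$ as $z\to e^{it}$ nontangentially, I would test the identity $I-F(z)^*G(z)=D_{G(z)}R(z)D_{G(z)}$ against vectors $u,v\in\cD_{G(e^{it})}$, obtaining
\[
\inn{R(z)D_{G(z)}u}{D_{G(z)}v}=\inn{(I-F(z)^*G(z))u}{v}.
\]
As $z\to e^{it}$ nontangentially, the right-hand side converges to $\inn{R(e^{it})D_{G(e^{it})}u}{D_{G(e^{it})}v}$; on the left, the uniform bound $\|R(z)\|\leq\|R\|_\infty$ combined with the norm convergence $D_{G(z)}\to D_{G(e^{it})}$ permits replacing $D_{G(z)}$ by $D_{G(e^{it})}$ asymptotically. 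Since $D_{G(e^{it})}|_{\cD_{G(e^{it})}}$ is a bijection (by the argument of the first paragraph), the vectors $D_{G(e^{it})}u$ and $D_{G(e^{it})}v$ exhaust $\cD_{G(e^{it})}$, yielding $\inn{R(z)u'}{v'}\to\inn{R(e^{it})u'}{v'}$ for every $u',v'\in\cD_{G(e^{it})}$, which in finite dimensions is norm convergence of the compressions.

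The main obstacle I anticipate is this final step. It has two delicate pieces: upgrading the uniform $r$ of (SCPOiii), originally stated on $\BD$, to the limit point $e^{it}$ so that $F(e^{it})\prec G(e^{it})$ holds independently of any almost-everywhere statement on $\BT$; and transferring the ``distorted'' convergence involving $D_{G(z)}$ on both sides to convergence of $R(z)$ sandwiched between $P_{\cD_{G(e^{it})}}$. The latter is delicate because $\cD_{G(e^{it})}$ is in general strictly smaller than $\cD_G$, so one has no a priori control of $R(z)$ on $\cD_G\ominus\cD_{G(e^{it})}$ and must rely on the uniform norm bound on $R$.
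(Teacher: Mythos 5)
Your proposal is correct, but it takes a genuinely different route from the paper. The paper never inverts the defect operators: it splits $I-F^*G$ into $\re(I-F^*G)$ and the positive and negative parts of $\im(I-F^*G)$, factors the three square roots through $D_G$ by means of a continuity-preserving refinement of Douglas' lemma (Proposition \ref{P:ExtendedDouglas} and Corollary \ref{C:QRbehavior}), assembles $R=L_1^*L_1+i(L_2^*L_2-L_3^*L_3)$, and then builds $Q$, $\wtilQ$ and $\wtilR$ by re-running the factorization scheme from the proofs of Theorems \ref{T:prec} and \ref{T:eqrel}; the boundary statement is obtained by pushing the factors $L_i$ nontangentially to $\BT$ through the same corollary. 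You instead exploit finite dimensionality from the outset: since $\ran D_G(\la)$ is automatically closed and, by Lemma \ref{L:eqincl}, independent of $\la$, the restrictions $D_G(\la)|_{\cD_G}$ and $D_{G^*}(\la)|_{\cD_G^*}$ are continuously invertible on $\BD$, which gives closed-form expressions for $Q$ and $R$ (and, after noting $\ran(F-G)\subset\cD_F^*$ and $\ran(I-F^*G)\subset\cD_F$ from $G\precI F$, for $\wtilQ$ and $\wtilR$). This is shorter, and it has the bonus that the $\cL(\cD_G,\cD_G^*)$- and $\cL(\cD_G)$-valued solutions are \emph{unique}, so every admissible choice is automatically continuous; the paper's route is heavier, but its machinery is dimension-free and reusable, and it propagates the explicit norm bounds of Section \ref{S:LaurentPreOrder}. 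Your boundary argument --- a weak-convergence sandwich using the uniform bound $\|R(z)\|\leq\|R\|_\infty$, the norm convergence $D_{G(z)}\to D_{G(e^{it})}$, and the fact that $D_{G(e^{it})}$ maps $\cD_{G(e^{it})}$ onto itself --- is sound and arguably cleaner than tracking the Douglas factors to the boundary; in finite dimensions weak convergence of the compressions upgrades to norm convergence, as you say.

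One point you should make explicit: (SCPOi) and (SCPOii) require $Q$ and $R$ to be \emph{bounded} on $\BD$, and your formulas do not show this directly, since $(D_G(\la)|_{\cD_G})^{-1}$ may blow up as $\la$ approaches $\partial\BD$. The fix is the uniqueness you already have: Theorem \ref{T:precSC} supplies some bounded $\cL(\cD_G,\cD_G^*)$-valued $Q$ and $\cL(\cD_G)$-valued $R$, and these must coincide with your explicit formulas, so your continuous choices are bounded. This is not cosmetic --- the finiteness of $\|R\|_\infty$ is exactly what your boundary sandwich consumes.
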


In order to prove Proposition \ref{P:QRbehavior} we first prove two preparational results, which can be viewed as extensions of Douglas' lemma. We do not require finite dimensionality at this stage.

\begin{proposition}\label{P:ExtendedDouglas}
Let $X_t\in\cL(\cH,\cH_1)$, $Y_t\in\cL(\cH,\cH_2)$, $t\in(0,1]$. Assume
\begin{itemize}
\item[(i)] $X_t^*X_t\geq Y_t^*Y_t$ for all $t\in(0,1]$;

\item[(ii)] $\lim_{t\uparrow 1} X_t=X_1$ and $\lim_{t\uparrow 1}Y_t=Y_1$, both in the strong operator topology.
\end{itemize}
Then there exist contractions $Z_t\in\cL_1(\cH_1,\cH_2)$ such that $Y_t=Z_tX_t$ for each $t\in(0,1]$ and $\lim_{t\uparrow 1} Z_t w= Z_1 w$ for any $w\in\oran X_1$.
\end{proposition}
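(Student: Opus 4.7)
The plan is to produce the contractions $Z_t$ pointwise in $t$ via Douglas' lemma, and then exploit uniform boundedness together with the strong convergence hypotheses to push convergence from the algebraic range of $X_1$ to its closure.

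First, for each fixed $t\in(0,1]$, the hypothesis $X_t^*X_t\geq Y_t^*Y_t$ gives $\|Y_th\|\leq \|X_th\|$ for every $h\in\cH$. Hence the assignment $X_th\mapsto Y_th$ extends to a well-defined contraction from $\oran X_t$ to $\cH_2$, which I extend to $\cH_1$ by letting it act as $0$ on $(\oran X_t)^{\perp}$. Call this contraction $Z_t$. By construction $Y_t=Z_tX_t$ and $\|Z_t\|\leq 1$. (This is just the standard factorization form of Douglas' lemma \cite{D66} applied uniformly.)

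Next, I would establish convergence on the algebraic range $\ran X_1$. Given $w=X_1h$ for some $h\in\cH$, decompose
\[
Z_tw=Z_tX_th+Z_t(X_1-X_t)h=Y_th+Z_t(X_1-X_t)h.
\]
The first term tends to $Y_1h=Z_1X_1h=Z_1w$ by the strong convergence $Y_t\to Y_1$, and the second term is bounded in norm by $\|(X_1-X_t)h\|$ (since $\|Z_t\|\leq 1$), which tends to $0$ by the strong convergence $X_t\to X_1$. Therefore $Z_tw\to Z_1w$ for every $w\in\ran X_1$.

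Finally, I would extend the convergence to all of $\oran X_1$ by a standard $\vep/3$ argument that relies on the uniform bound $\sup_{t\in(0,1]}\|Z_t\|\leq 1$: given $w\in\oran X_1$ and $\vep>0$, approximate $w$ within $\vep/3$ by some $X_1h\in\ran X_1$, then
\[
\|Z_tw-Z_1w\|\leq \|Z_t(w-X_1h)\|+\|Z_tX_1h-Z_1X_1h\|+\|Z_1(X_1h-w)\|,
\]
where the first and third terms are each at most $\vep/3$, and the middle term is less than $\vep/3$ for all $t$ sufficiently close to $1$ by the previous paragraph. I do not see a real obstacle here; the only mild subtlety is keeping track of the fact that the convergence claim is only asserted on $\oran X_1$ (not on all of $\cH_1$), which is exactly what the uniform-boundedness plus density argument delivers.
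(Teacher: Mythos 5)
Your proof is correct and follows essentially the same route as the paper: pointwise application of Douglas' lemma to obtain uniformly contractive $Z_t$, the identity $Z_tX_1h=Y_th+Z_t(X_1-X_t)h$ to get convergence on $\ran X_1$, and a density argument using $\sup_t\|Z_t\|\leq 1$ to pass to $\oran X_1$. No gaps.
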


\begin{proof}[\bf Proof.]
By Douglas' lemma \cite{D66}, condition (i) implies that, for each $t\in(0,1]$, there exists a unique contraction $Z_t\in\cL_1(\cH_1,\cH_2)$ with $Y_t=Z_tX_t$ and $\kr Z_t=\kr X_t^*$.

Next we prove that $\lim_{t\uparrow 1} Z_t=Z_1$ in the strong operator topology on $\oran X_1$. Let $w\in\oran X_1$, $\|w\|=1$. Fix $\vep>0$. Determine a $v\in \cH_1$ such that $\|w-X_1v\|< \vep/4$. Now take $t_0\in(0,1)$ such that
\begin{equation}\label{XYSOTcon}
\|Y_1v-Y_tv\|<\vep/4,\quad \|X_1v-X_tv\|<\vep/4\quad (t\in(t_0,1]).
\end{equation}
Note that \eqref{XYSOTcon} implies that for $t\in(t_0,1]$
\begin{align*}
\|(Z_1-Z_t)X_1v\|
&=\|Z_1X_1v-Z_tX_tv+Z_tX_tv-Z_tX_1v\|\\
&\leq\|Z_1X_1v-Z_tX_tv\|+\|Z_tX_tv-Z_tX_1v\|\\
&\leq\|Y_1v-Y_tv\|+\|X_tv-X_1v\|<\vep/2.
\end{align*}
Therefore, for all $t\in(t_0,1]$ we have
\begin{align*}
\|Z_1w-Z_tw\|
&=\|Z_1w-Z_1X_1v+Z_1X_1v-Z_tX_1v+Z_tX_1v-Z_tw\|\\
&\leq \|Z_1(w-X_1v)\|+\|(Z_1X_1-Z_tX_1)v\|+\|Z_t(X_1v-w)\|\\
&\leq 2\|w-X_1v\|+\|(Z_1X_1-Z_tX_1)v\|<\vep.
\end{align*}
We conclude that $Z_tw\to Z_1w$ for each $w\in \oran X_1$.
\end{proof}

\begin{corollary}\label{C:QRbehavior}
Let $U$ and $V$ be $\cL(\cH,\cH_1)$- and $\cL(\cH,\cH_2)$-valued functions on $\BD$, continuous in the strong operator topology, such that $U(\la)^*U(\la)\leq V(\la)^*V(\la)$ for each $\la\in\BD$. Then there exists an $\cL(\cH_1,\cH_2)$-valued function $W$ on $\BD$ such that $\|W\|_\infty\leq 1$ and $U(\la)=W(\la)V(\la)$, $\la\in\BD$, and for each $\zeta\in\BD$ and $x\in\oran V(\zeta)$ the function $\la\mapsto W(\la)x$ is continuous at $\zeta$. Furthermore, in case the nontangential limits of $U$ and $V$ exist in $e^{it}\in\BT$, then $W$ can be defined in $e^{it}$, maintaining the identity $U=WV$ and $\|W\|_{\infty}\leq 1$ and such that $\lim_{\la\to e^{it}}W(\la)x=W(e^{it})x$ nontangentially for each $x\in\oran V(e^{it})$.
\end{corollary}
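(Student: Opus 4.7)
The plan is to combine a pointwise application of Douglas' lemma with Proposition \ref{P:ExtendedDouglas} for the continuity statements. First, for each $\la\in\BD$ the hypothesis $U(\la)^*U(\la)\le V(\la)^*V(\la)$ together with Douglas' lemma produces a unique contraction $W(\la)$ satisfying $U(\la)=W(\la)V(\la)$ and $\kr W(\la)=\kr V(\la)^*$. Defining $W$ in this way on $\BD$ immediately yields the identity $U=WV$ and $\|W\|_\infty\le 1$.

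Next I would verify the continuity at an arbitrary point $\zeta\in\BD$. Given a sequence $(\mu_n)\subset\BD$ with $\mu_n\to\zeta$, introduce a reparametrization $t\mapsto\la_t$, $t\in(0,1]$, with $\la_{t_n}=\mu_n$ for some $t_n\uparrow 1$ and $\la_1=\zeta$; the SOT continuity of $U$ and $V$ then gives $X_t:=V(\la_t)\to V(\zeta)$ and $Y_t:=U(\la_t)\to U(\zeta)$ strongly as $t\uparrow 1$. Hypothesis (i) of Proposition \ref{P:ExtendedDouglas} holds by assumption, so the proposition delivers Douglas contractions $Z_t$ with $Y_t=Z_tX_t$ and $Z_tw\to Z_1w$ for every $w\in\oran V(\zeta)$. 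Since each $Z_t$ (including $Z_1$) is the unique Douglas contraction with kernel equal to $\kr X_t^*=\kr V(\la_t)^*$, it must coincide with $W(\la_t)$, respectively $W(\zeta)$, giving $W(\mu_n)w\to W(\zeta)w$ for every $w\in\oran V(\zeta)$, which is the required SOT continuity of $\la\mapsto W(\la)x$ at $\zeta$ for $x\in\oran V(\zeta)$.

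For the boundary statement, suppose the nontangential limits $U(e^{it})$ and $V(e^{it})$ exist at $e^{it}\in\BT$ in the strong operator topology. I would first check that the inequality $U(e^{it})^*U(e^{it})\le V(e^{it})^*V(e^{it})$ passes to the boundary: for any $h\in\cH$ choose a nontangential path $\la_t\to e^{it}$ and pass to the limit in $\|U(\la_t)h\|^2\le\|V(\la_t)h\|^2$ using the strong convergence. Douglas' lemma at $e^{it}$ then produces $W(e^{it})$ as the unique contraction with $U(e^{it})=W(e^{it})V(e^{it})$ and $\kr W(e^{it})=\kr V(e^{it})^*$, so the bound $\|W\|_\infty\le 1$ is preserved. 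A second application of Proposition \ref{P:ExtendedDouglas}, with $\la_t$ now approaching $e^{it}$ nontangentially along an arbitrarily chosen sequence, produces the nontangential strong convergence $W(\la)x\to W(e^{it})x$ for $x\in\oran V(e^{it})$. The one subtle point throughout is verifying that the $Z_t$ supplied by the proposition agrees with the independently defined $W(\la_t)$; this reduces entirely to the uniqueness clause of Douglas' lemma under the kernel constraint, and is the place where one must take care.
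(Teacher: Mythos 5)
Your proof is correct and follows essentially the same route as the paper: define $W$ pointwise via Douglas' lemma and then invoke Proposition \ref{P:ExtendedDouglas} along continuous curves (with the correct assignment $X_t=V(\la_t)$, $Y_t=U(\la_t)$) for the interior continuity and the nontangential boundary limits. Your explicit attention to identifying the $Z_t$ from the proposition with the pointwise-defined $W(\la_t)$ via the uniqueness clause $\kr Z_t=\kr X_t^*$, and to passing the inequality $U^*U\leq V^*V$ to the boundary point, fills in details the paper leaves implicit.
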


\begin{proof}[\bf Proof.]
Applying Douglas' lemma for each $\la\in\BD$, we can define $W$ pointwise, also in the point of $\BT$ where $U$ and $V$ both have nontangential limits. To see that $\la\mapsto W(\la)x$ is continuous at $\zeta\in\BD$ for each $x\in \oran V(\zeta)$, apply Proposition \ref{P:QRbehavior} with $X_t=U(\la_t)$ and $Y_t=V(\la_t)$ for any continuous curve $t\mapsto\la_t$, $t\in(0,1]$ in $\BD$ with $t_1=\zeta$. Similarly, if the nontangential limits of $U$ and $V$ exist at $e^{it}\in\BT$ and $x\in\oran V(e^{it})$, then one finds that the nontangential limit of $\la\mapsto W(\la)x$ exist at $e^{it}$ by considering the values of $U$ and $W$ along continuous curves in $\BD$ that converge to $e^{it}$ nontangentially.
\end{proof}

We are now almost ready to prove Proposition \ref{P:QRbehavior}. Let $K\in S(\cU,\cY)$. Note that
$K$ and $K^*$ converge nontangentially to $K\in L^\infty(\cU,\cY)$ and $K^*\in L^\infty(\cY,\cU)$ a.e.\ on $\BT$ in the strong operator topology, cf., \cite[Section V.2]{NF70}. Assuming $\cU$ and $\cY$ to be finite dimensional, the convergence occurs in any norm, hence also in the operator norm, and it follows that $D_{K}$ and $D_{K^*}$ converge nontangentially to $D_K\in L^\infty(\cU,\cU)$ and $D_{K^*}\in L^\infty(\cY,\cY)$ a.e.\ on $\BT$.

\begin{proof}[\bf Proof of Proposition \ref{P:QRbehavior}.]
In order to prove Proposition \ref{P:QRbehavior} we have to recall the pointwise construction of $Q$, $R$, $\wtilQ$ and $\wtilR$ from the proofs of Section \ref{S:BallPreOrder}. Assume $F\precI G$.

We start with the construction of $R$. Following the proof of the implication (POiii) $\Rightarrow$ (POii) of Theorem \ref{T:prec} along with the proofs of Lemmas \ref{L:ReImIneq} and \ref{L:Basic}, for each $\la\in\BD$ we set
\begin{center}
$T_1(\la)=(\re(I-F(\la)^*G(\la)))^{\half},$\\[.2cm]
$T_2(\la)=(\im(I-F(\la)^*G(\la))_+)^{\half}\ands
T_3(\la)=(\im(I-F(\la)^*G(\la))_-)^{\half}$.
\end{center}
Here $\im(I-F(\la)^*G(\la))_+$ and $\im(I-F(\la)^*G(\la))_-$ are positive semidefinite operators on $\cU$ such that
\[
\im(I-F(\la)^*G(\la))=\im(I-F(\la)^*G(\la))_+-\im(I-F(\la)^*G(\la))_-.
\]
Define $\eta_1=\frac{1+r}{2r}$ and $\eta_2=\eta_3=\frac{1}{2r}$, with $r$ as in (POiii).
Then
\[
T_k(\la)^*T_k(\la)\leq \eta_k D_G(\la)^2=\eta_k D_G(\la)^* D_G(\la),\mbox{ for }k=1,2,3.
\]
By taking the spectral projections in the definitions of $\im(I-F(\la)^*G(\la))_+$ and $\im(I-F(\la)^*G(\la))_-$ in the right way, we can arrange the functions $\la\mapsto T_k(\la)$ to be continuous in $\BD$ for $k=2,3$. Clearly, $\la\mapsto T_1(\la)$ defines a continuous function on $\BD$. Applying Corollary \ref{C:QRbehavior} for $k=1,2,3$ with $U=T_k$ and $V=\sqrt{\eta_k}D_G$, where we note that $\oran V(\la)=\cD_G$ for each $\la\in\BD$, we obtain that there exists a continuous $\cL(\cU)$-valued function $L_k$ on $\BD$, setting $L_k|_{\cU\ominus\cD_G}=0$, such that $\|L_k\|_\infty\leq \sqrt{\eta_k}$ and $T_k=L_k D_G$. Then $R=L_1^* L_1+i (L_2^*L_2-L_3^*L_3)$ satisfies $I-F^*G=D_G R D_G$ and $\|R\|_{\infty}\leq \sum_{k=1}^3\|L_1\|_\infty^2\leq \sum_{k=1}^3\eta_k<\infty$.

For each $k=1,2,3$ the function $L_k$ is continuous on $\BD$. Since $\cU$ and $\cY$ are finite dimensional, also $L_k^*$ is continuous on $\BD$. This implies $R$ is continuous on $\BD$. The statement about the nontangential limits of $R$ follows by applying the statement from Corollary \ref{C:QRbehavior} concerning the nontangential limits in the construction of $R$, as above. Note here that Corollary \ref{C:QRbehavior} only gives the nontangential convergence of $L_i|_{\cD_{G(e^{it})}}$, and the inclusion $\cD_{G(e^{it})}\subset \cD_G$ can be strict. Thus we obtain that $L_i$ can be defined in $e^{it}$, setting $L_i(e^{it})|_{\cD_{G(e^{it})}^\perp}=0$, such that
\begin{align*}
\lim_{z\to e^{it}}P_{\cD_{G(e^{it})}}L_i(z)^*L_i(z)P_{\cD_{G(e^{it})}} &=P_{\cD_{G(e^{it})}}L_i(e^{it})^*L_i(e^{it})P_{\cD_{G(e^{it})}}\\
&=L_i(e^{it})^*L_i(e^{it}),
\end{align*}
as $z$ approaches $e^{it}$ nontangentially. The claim about the nontangential limits of $R$ now follows since $R=L_1^* L_1+i (L_2^*L_2-L_3^*L_3)$.

Next we consider the construction of $Q$ from $R$, following the proof of the implication (POii) $\Rightarrow$ (POi) of Theorem \ref{T:prec}. Hence, let $R$ be as constructed in the first part of the proof. By part (ii) of Theorem \ref{T:prec}, with $A=F(\la)$, $B=G(\la)$ and $Y=R(\la)$ for some $\la\in\BD$, we have $\kr (2\re(R(\la))-I)=\{0\}$, i.e., $\oran(2\re(R(\la))-I)=\cD_G$, and this identity holds independently of the choice of $\la$. Formula \eqref{ReYpos} yields
\[
D_G(2\re(R)-I)D_G=D_F^2+(F-G)^*(F-G)\geq (F-G)^*(F-G).
\]
Now take $U=F-G$ and $V=(2\re(R)-I)^{\half}D_G$. Then $U$ and $V$ are continuous and the nontangential limits of $U$ and $V$ in $e^{it}\in\BT$ exists in case the nontangential limits of $F$ and $G$ exist in $e^{it}$. Moreover, $\oran V(\la)=\cD_G$ for each $\la\in\BD$. Hence, by Corollary \ref{C:QRbehavior} we obtain that there exists a $\cL(\cD_G,\cY)$-valued continuous function $\wtilM$ on $\BD$ such that $\|\wtilM\|_{\infty}\leq 1$ and $\wtilM V=U$. Then the function $Q=D_{G^*}\wtilM (2\re(R)-I)^{\half}+G(I-R^*)$ satisfies $F-G=D_{G^*}QD_G$ and clearly is continuous on $\BD$.

We proceed with the statement concerning $\wtilQ$ and $\wtilR$. Thus assume $F\simI G$. Following the constructions from the proof of Theorem \ref{T:eqrel}, with $A=F(\la)$ and $B=G(\la)$, with $\la\in\BD$ arbitrary, and $R$ and $Q$ as defined above, again using Formula \eqref{ReYpos} we obtain
\[
D_G(2\re(R)-I)D_G\geq D_F^2.
\]
This shows, using Corollary \ref{C:QRbehavior} with $U=D_F$ and $V=(2\re(R)-I)^{\half}D_G$, that there exists a $\cL(\cD_G)$-valued continuous function $\wtilN'$ with $\|\wtilN'\|_\infty\leq 1$ and $\wtilN'(2\re(R)-I)^{\half}D_G=\wtilN'V=U=D_F$. Then $N'=\wtilN'(2\re(R)-I)^{\half}$ is a continuous and bounded function on $\BD$ with $N'D_G=D_F$ and $\wtil{R}=N'^*R$ is continuous and satisfied the claims of Theorem \ref{T:eqrelSC}. Similarly, now using that $L_F\prec L_G$ implies $L_F^* \prec L_G^*$, one obtains that there exists a $\cL(\cD_G^*)$-valued continuous bounded function $\wtilN'_*$ on $\BD$ such that $N'_*D_{G^*}=D_{F^*}$, in which case $\wtilQ=N'^*_*Q$ is continuous and satisfied the corresponding claims of Theorem \ref{T:eqrelSC}.
\end{proof}

The limitations of Proposition \ref{P:QRbehavior}, finite dimensionality and limited information about the boundary behavior, cannot easily be remedied by extensions of Proposition \ref{P:ExtendedDouglas} and Corollary \ref{C:QRbehavior}. The following two examples illustrate the complications. We start with the case where $\cU$ and $\cY$ are infinite dimensional. In this case the complication in the proof of Proposition \ref{P:QRbehavior} is that not only the functions $L_i$ need to be continuous, but $L_i^*L_i$ as well.

\begin{example}
Set $\vph(\la)=\sin^2(\pi/|\la|)$, $\la\in\BD\backslash\{0\}$ and $\vph(0)=0$. Let $S$ denote the forward shift operator on $\ell^2_+$ and $\lfloor a\rfloor$ the smallest integer greater than $a\in\BR$. Define $U(\la)=Z(\la)+I_{\ell^2_+}$ with $Z(\la)=\vph(\la)S^{*\lfloor 1/|\la|\rfloor}$ and $V(\la)=2I_{\ell^2_+}$ for each $\la\in\BD$. Although $\vph$ is not continuous at $0$, $Z$ is continuous on $\BD$ in the strong operator topology. However, $Z^*$ is not continuous at $\la=0$, since $\|Z^*(\la)u\|=\|\vph(\la)S^{\lfloor 1/|\la|\rfloor}u\|=\vph(\la)\|u\|$. Thus $U$ is continuous on $\BD$, but $U^*$ is not. Clearly $V$ is continuous on $\BD$ and we have $U^*U\leq V^*V$. The function $W$ with $U=WV$ must be equal to $U$, and, as remarked above, is indeed continuous in the strong operator topology. Identifying $\BC^k$ with the first $k$ entries of $\ell^2_+$, we obtain that
\[
W^*(\la)W(\la)=I+\vph(\la)^2(I_{\ell^2_+}-P_{\BC^{\lfloor 1/|\la|\rfloor}})+Z(\la)+Z^*(\la).
\]
The first summand is constant, the two in the middle converge to 0 in the strong operator topology as $|\la|\to 0$, but the last one does. Consequently, $W^*W$ is not continuous in the strong operator topology.
\end{example}

As a result of Lemma \ref{L:eqincl} and Corollary \ref{C:QRbehavior}, we can prove that $Q$, $R$, $\wtilQ$ and $\wtilR$ are continuous on $\BD$, and not just in specific directions. However, on $\BT$ the inclusion $\cD_{G(e^{it})}\subset \cD_G$ may be strict, and via Corollary \ref{C:QRbehavior} we only achieve convergence in directions from $\cD_{G(e^{it})}$. The next example shows it may indeed not be possible, in the context of Corollary \ref{C:QRbehavior}, to have convergence in the other directions.

\begin{example}
Define
\[
V(\la)=\mat{cc}{1&(1-|\la|^2)^\half\\ (1-|\la|^2)^\half& 1-|\la|^2}=\mat{cc}{1&\al_\la^\half \\ \al_\la^\half & \al_\la}\quad (\la\in\BD).
\]
Here $\al_\la=1-|\la|^2$. Note that $V(\la)$ is positive definite on $\BD$ and positive semi-definite on $\BT$. The singular value decomposition of $V$ on $\BD$ is given by $V=Z^*DZ$ with
\begin{align*}
Z(\la)&=\mat{cc}{-2\al_\la^\half\rho_{+,\la}^{-\half}&-2\al_\la^\half\rho_{-,\la}^{-\half}\\
((1-2\al_\la)-\sqrt{1+4\al_\la^2})\rho_{+,\la}^{-\half} &(1-2\al_\la)+\sqrt{1+4\al_\la^2})\rho_{-,\la}^{-\half}},\\
D(\la)&=\mat{cc}{\frac{1+2\al_\la+\sqrt{1+4\al_\la^2}}{2}&0\\0&\frac{1+2\al_\la-\sqrt{1+4\al_\la^2}}{2}}.
\end{align*}
Here $\rho_{\pm,\la}=(2-2\al_\la+8\al_\la^2)\mp 2(1-2\al_\la)\sqrt{1+4\al_\la^2}$. For $\la\in\BT$ we have $Z(\la)=I_2$ and $D(\la)$ is defined as above. Note that the entries depend on $\al_\la$ only, hence on $|\la|$. As $|\la|\uparrow 1$, we have $\rho_{+,\la}\to 0$ and $\rho_{-,\la}\to 4$. From this we immediately obtain that the second column of $Z(\la)$ converges to $\sbm{0\\1}$ as $|\la|\uparrow 1$. Since $Z(\la)$ is unitary for each $\la\in\ov{\BD}$, the two columns are perpendicular unit vectors, and it follows that the first column of $Z(\la)$ converges to $\sbm{1\\0}$ as $|\la|\uparrow 1$. We conclude that $Z$ is continuous on $\ov{\BD}$, and so is its inverse $Z^*$.

Now define $\vph(\la)=\sin^2(\frac{1}{1-\re(\la)})$, $\la\in\ov{\BD}\backslash\{1\}$, and $\vph(1)=0$. Then $\vph$ is continuous on $\ov{\BD}\backslash\{1\}$, but not at $1$, and has values in $[0,1]$. Set $U(\la)=Z(\la)^*\wtilD(\la)Z(\la)$ for each $\la\in\ov{\BD}$, where $D(\la)=\diag(1,\vph(\la))D(\la)$. Write $d_1(\la)$ and $d_2(\la)$ for the diagonal elements of $D(\la)$. Since $d_2(\la)\to 0$ as $|\la|\uparrow 1$, $\wtilD$ is continuous on $\ov{\BD}$. Moreover, on $\ov{\BD}$ we have
\[
V(\la)^*V(\la)-U(\la)^*U(\la)=Z(\la)^*\diag(d_1(\la)^2, (1-\vph(\la)^2)d_2(\la)^2)Z(\la)\geq0.
\]
The function $W$ with $WV=Z$ is given by $W(\la)=Z(\la)^*\diag(1,\vph(\la)) Z(\la)$, is continuous on $\ov{\BD}\backslash\{1\}$, but not at $1$, since otherwise we would have that $\diag(1,\vph(\la))=Z(w)W(\la)Z(\la)^*$  is continuous at $1$ as well.
\end{example}

%

\section{Invariance of $\precI$ and $\simI$ under Redheffer maps}
\setcounter{equation}{0}\label{S:Redheffer}

In this section we investigate the extend to which the pre-order $\precI$ and equivalence relation $\simI$ are invariant under Redheffer maps. In particular, we prove Theorems \ref{T:RedInvar} and \ref{T:RedInvar2}. Let
\begin{equation}\label{Phi}
\Phi=\mat{cc}{\Phi_{11}&\Phi_{12}\\\Phi_{22}&\Phi_{22}}\in S(\cE'\oplus\cU,\cE\oplus\cY),
\end{equation}
with $\cU$, $\cY$, $\cE$ and $\cE'$ separable Hilbert spaces. Recall that the Redheffer map associated with $\Phi$ is defined by
\begin{equation}\label{Redheffer}
\begin{aligned}
\fR_\Phi[K]&=\Phi_{22}+\Phi_{21}K(I-\Phi_{11}K)^{-1}\Phi_{12}\\
&\qquad \mbox{with }K\in S(\cE,\cE'),\ \|\Phi_{11}(0)K(0)\|<1.
\end{aligned}
\end{equation}
Here all operations are pointwise.

Before proving Theorem \ref{T:RedInvar} we start with some preparations. The first are some well-known inequalities, cf., \cite[Page 732]{S78} and \cite[Section XIV.1]{FF90}.

\begin{lemma}\label{L:EssIneq}
Let $\Phi$ be as in \eqref{Phi} and $K\in S(\cE,\cE')$ with $\|\Phi_{11}(0)K(0)\|<1$. Then the following inequalities hold in each point of $\BD$:
\begin{equation}\label{Ineqs}
\begin{aligned}
D_{\fR_\Phi[K]}^2&\geq \Phi_{12}^*(I-\Phi_{11}K)^{-*}D_K^2(I-\Phi_{11}K)^{-1}\Phi_{12};\\
D_{\fR_\Phi[K]^*}^2&\geq \Phi_{21}(I-\Phi_{11}K)^{-1}D_{K^*}^2(I-\Phi_{11}K)^{-*}\Phi_{21}^*.
\end{aligned}
\end{equation}
Moreover, assume the nontangential limits of $K$, $\Phi$ and $(I-\Phi_{11}K)^{-1}$ in $e^{it}\in\BT$ exist, then the inequalities \eqref{Ineqs} extends to $e^{it}$, with identity in the first inequality in case $\Phi$ is inner and identity in the second inequality in case $\Phi$ is $*$-inner.
\end{lemma}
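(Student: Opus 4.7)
The natural approach is system-theoretic: the Redheffer formula is the transfer function of the feedback connection obtained by routing the top output of $\Phi$ back through $K$, so the contractivity of $\Phi$ and of $K$, applied to the signals in that loop, should deliver both defect inequalities directly. Pointwise at $\la\in\BD$, for any $u\in\cU$, I would set $e=(I-\Phi_{11}(\la)K(\la))^{-1}\Phi_{12}(\la)u$ and $e_1=K(\la)e$ and check by direct substitution that
\[
\Phi(\la)\sbm{e_1\\u}=\sbm{e\\ \fR_\Phi[K](\la)u}.
\]
Contractivity of $\Phi(\la)$ then gives $\|e\|^2+\|\fR_\Phi[K](\la)u\|^2\leq\|e_1\|^2+\|u\|^2$, and combining this with the identity $\|e_1\|^2=\|K(\la)e\|^2=\|e\|^2-\|D_{K(\la)}e\|^2$ yields $\|D_{\fR_\Phi[K](\la)}u\|^2\geq\|D_{K(\la)}e\|^2$. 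Re-expanding $e$ turns this scalar inequality (valid for every $u$) into the first claimed operator inequality.

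For the second inequality I would run the dual argument. Using the swap identity $(I-K^*\Phi_{11}^*)^{-1}K^*=K^*(I-\Phi_{11}^*K^*)^{-1}$ one verifies that $\fR_\Phi[K]^*=\fR_{\Phi^\sharp}[K^*]$ for $\Phi^\sharp=\sbm{\Phi_{11}^* & \Phi_{21}^*\\ \Phi_{12}^* & \Phi_{22}^*}\in S(\cE\oplus\cY,\cE'\oplus\cU)$, and then the first inequality applied to $\Phi^\sharp$ and $K^*$ delivers the second bound after reinterpreting the resulting inverses. Equivalently, one can simply repeat the feedback computation with $\Phi(\la)^*$ in place of $\Phi(\la)$: for $y\in\cY$, set $e_1=(I-K(\la)^*\Phi_{11}(\la)^*)^{-1}\Phi_{21}(\la)^*y$ and $e=K(\la)^*e_1$, verify $\Phi(\la)^*\sbm{e\\y}=\sbm{e_1\\ \fR_\Phi[K](\la)^*y}$, and conclude exactly as in the first part.

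For the boundary statement, the hypothesis that the nontangential limits of $K$, $\Phi$, and $(I-\Phi_{11}K)^{-1}$ all exist at $e^{it}$ lets the pointwise computation be carried out verbatim at $e^{it}$: the operators in play are bounded, the estimates $\|\Phi(e^{it})\|\leq 1$ and $\|K(e^{it})\|\leq 1$ hold a.e., and the algebraic identities involving $(I-\Phi_{11}K)^{-1}$ survive the limit. The equality assertions come essentially for free, since the only non-equality used in the derivation is the contractivity step $\|\Phi\sbm{e_1\\u}\|^2\leq\|e_1\|^2+\|u\|^2$; when $\Phi$ is inner, $\Phi(e^{it})$ is isometric a.e., forcing that step to be an equality and hence the first inequality to be an equality at $e^{it}$, while dually, when $\Phi$ is $*$-inner the analogous step in the dual argument becomes equality and yields equality in the second inequality. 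The one minor bookkeeping hazard I anticipate is keeping $(I-\Phi_{11}K)^{-1}\in\cL(\cE)$ and $(I-K\Phi_{11})^{-1}\in\cL(\cE')$ straight when rewriting the second bound, but these are freely interchangeable via $K(I-\Phi_{11}K)^{-1}=(I-K\Phi_{11})^{-1}K$ wherever both sides make sense.
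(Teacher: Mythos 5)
Your argument is correct. Note that the paper itself gives no proof of this lemma --- it only cites \cite[p.~732]{S78} and \cite[Section XIV.1]{FF90} --- so there is nothing internal to compare against; the energy-balance/feedback computation you give is precisely the standard proof found in those references. Concretely: with $e=(I-\Phi_{11}K)^{-1}\Phi_{12}u$ and $e_1=Ke$ one has $\Phi\sbm{e_1\\u}=\sbm{e\\ \fR_\Phi[K]u}$, and contractivity of $\Phi$ combined with $\|e_1\|^2=\|e\|^2-\|D_Ke\|^2$ gives $\|D_{\fR_\Phi[K]}u\|^2\geq\|D_Ke\|^2$, which is the first inequality; the dual computation with $\Phi^*$ gives the second; and when $\Phi$ (resp.\ $\Phi^*$) acts isometrically a.e.\ on $\BT$ the single inequality in the chain becomes an equality, yielding the boundary identities. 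Two small bookkeeping remarks. First, as you anticipate in your last paragraph, the second inequality as printed in the paper does not typecheck: $\Phi_{21}\in\cL(\cE',\cY)$ cannot be composed with $(I-\Phi_{11}K)^{-1}\in\cL(\cE)$, and the correct statement (consistent with Corollary \ref{C:confuncts}) has $(I-K\Phi_{11})^{-1}\in\cL(\cE')$ there; your dual argument produces exactly that corrected form. Second, in your dual computation the resolvent applied to $\Phi_{21}^*y\in\cE'$ should be $(I-\Phi_{11}^*K^*)^{-1}$ rather than $(I-K^*\Phi_{11}^*)^{-1}$ (the latter acts on $\cE$); this is the same index slip and does not affect the argument. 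Also, your remark that one may either pass through $\Phi^\sharp$ or simply rerun the computation pointwise with $\Phi(\la)^*$ is well taken --- the latter is cleaner, since $\la\mapsto\Phi(\la)^*$ is not analytic, but analyticity plays no role in the pointwise estimate.
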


Applying Douglas' Lemma \cite{D66} pointwise to the the inequalities \eqref{Ineqs}, provides the following corollary.

\begin{corollary}\label{C:confuncts}
Let $\Phi$ be as in \eqref{Phi} and $K\in S(\cE,\cE')$ with $\|\Phi_{11}(0)K(0)\|<1$. Then there exists an $\cL_1(\cD_{\fR_\Phi[K]},\cD_K)$-valued function $L$ and an $\cL_1(\cD_{\fR_\Phi[K]}^*,\cD_K^*)$-valued functions $L_*$, both defined on $\BD$ and in in a.e.\ $e^{it}\in\BT$ where the nontangential limits of $(I-\Phi_{11}K)^{-1}$ exist, such that
\[
L D_{\fR_\Phi[K]}=D_{K}(I-\Phi_{11}K)^{-1}\Phi_{12} \ands
D_{\fR_\Phi[K]^*}L_*=\Phi_{21}(I-K\Phi_{11})^{-1}D_{K^*}.
\]
Moreover, in case $\Phi$ is inner (resp.\ $*$-inner), then $L(e^{it})|_{\cD_{\fR_\Phi[K](e^{it})}}$ is an isometry (resp.\ $L_*(e^{it})^*|_{\cD_{\fR_\Phi[K]^*(e^{it})}}$ is an isometry) for a.e.\ $e^{it}\in\BT$ as above.
\end{corollary}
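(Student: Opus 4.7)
The plan is a pointwise application of Douglas' factorization lemma to the two inequalities in Lemma \ref{L:EssIneq}, executed at each $\la \in \BD$ and, via the boundary extension of that lemma, at a.e.\ $e^{it} \in \BT$ at which the required nontangential limits exist. All of the measurability/choice issues are incidental, since the corollary only asks for pointwise existence of the factors $L$ and $L_*$.

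For the first identity, with $A = D_K(I - \Phi_{11}K)^{-1}\Phi_{12}$ and $B = D_{\fR_\Phi[K]}$, the first inequality of Lemma \ref{L:EssIneq} is exactly $A^*A \leq B^*B$. Passing to adjoints and applying Douglas' lemma produces a contraction $L$ with $LB = A$, i.e.\ $LD_{\fR_\Phi[K]} = D_K(I - \Phi_{11}K)^{-1}\Phi_{12}$, together with $\ker L \supset \ker B = \cD_{\fR_\Phi[K]}^\perp$ and $\oran L \subset \oran A \subset \oran D_K = \cD_K$; hence $L$ may be viewed as an element of $\cL_1(\cD_{\fR_\Phi[K]}, \cD_K)$. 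For the second identity I would first put the RHS of the second inequality of Lemma \ref{L:EssIneq} into the form $A_* A_*^*$ with $A_* = \Phi_{21}(I - K\Phi_{11})^{-1}D_{K^*}$, using the standard identities $K(I - \Phi_{11}K)^{-1} = (I - K\Phi_{11})^{-1}K$ and $KD_K = D_{K^*}K$ (this rewriting also matches the form appearing in the statement of the corollary and resolves a dimension mismatch in the printed form of the lemma). Setting $B_* = D_{\fR_\Phi[K]^*}$ and applying Douglas' lemma directly to $A_*A_*^* \leq B_*B_*^*$ yields a contraction $L_*$ with $B_* L_* = A_*$; the bookkeeping $\ker L_* \supset \ker A_* \supset \ker D_{K^*} = (\cD_K^*)^\perp$ together with $\oran L_* \subset \oran B_* = \cD_{\fR_\Phi[K]}^*$ places $L_*$ in $\cL_1(\cD_K^*, \cD_{\fR_\Phi[K]}^*)$.

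The inner/$*$-inner addendum is then simply the equality case of the construction. When $\Phi$ is inner, Lemma \ref{L:EssIneq} upgrades the first inequality to equality at a.e.\ $e^{it} \in \BT$, so $A^*A = B^*B$; substituting $A = LB$ yields $B(L^*L - I)B = 0$, and a density argument using self-adjointness of $L^*L - I$ and $\oran B = \cD_{\fR_\Phi[K](e^{it})}$ forces $L^*L = I$ on $\cD_{\fR_\Phi[K](e^{it})}$, i.e.\ $L(e^{it})|_{\cD_{\fR_\Phi[K](e^{it})}}$ is an isometry. The $*$-inner case is symmetric and yields $L_* L_*^* = I$ on $\cD_{\fR_\Phi[K]^*(e^{it})}$, which is precisely the stated isometry property of $L_*(e^{it})^*$. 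I do not anticipate a substantive obstacle: the only care needed is in tracking the ranges and kernels of the Douglas factors and in reconciling the two equivalent algebraic forms of the second inequality.
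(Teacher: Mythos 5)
Your proposal is correct and follows essentially the same route as the paper, which simply applies Douglas' lemma pointwise to the two inequalities of Lemma \ref{L:EssIneq} (on $\BD$ and, via the boundary extension, a.e.\ on $\BT$), with the inner/$*$-inner addendum coming from the equality case. Your bookkeeping of kernels and ranges, and your correction of the dimensionally inconsistent second inequality to the form $\Phi_{21}(I-K\Phi_{11})^{-1}D_{K^*}^2(I-K\Phi_{11})^{-*}\Phi_{21}^*$ (and the corresponding reading $L_*\in\cL_1(\cD_K^*,\cD_{\fR_\Phi[K]}^*)$), are consistent with how $L$ and $L_*$ are actually used later in the paper.
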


\begin{lemma}\label{L:EssComp}
Let $\Phi$ be as in \eqref{Phi} and $K_1,K_2\in S(\cE,\cE')$ with $\|\Phi_{11}(0)K_i(0)\|<1$ for $i=1,2$. Then
\begin{equation}\label{EssComp}
\fR_\Phi[K_1]-\fR_\Phi[K_2]=
\Phi_{21}(I-K_1\Phi_{11})^{-1}(K_1-K_2)(I-\Phi_{11}K_2)^{-1}\Phi_{12}.
\end{equation}
\end{lemma}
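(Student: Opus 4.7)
The plan is a direct algebraic computation carried out pointwise on $\BD$. Subtracting the two Redheffer expressions, the constant term $\Phi_{22}$ cancels and the factors $\Phi_{21}$ on the left and $\Phi_{12}$ on the right can be pulled outside the bracket. This reduces the claim \eqref{EssComp} to proving the identity
\[
K_1(I-\Phi_{11}K_1)^{-1}-K_2(I-\Phi_{11}K_2)^{-1}=(I-K_1\Phi_{11})^{-1}(K_1-K_2)(I-\Phi_{11}K_2)^{-1}
\]
under the standing hypothesis $\|\Phi_{11}(0)K_i(0)\|<1$ (which guarantees that $I-\Phi_{11}(\la)K_i(\la)$, and by the usual argument also $I-K_i(\la)\Phi_{11}(\la)$, are pointwise invertible on $\BD$).

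Next I would record the well-known swap identity
\[
K(I-\Phi_{11}K)^{-1}=(I-K\Phi_{11})^{-1}K,
\]
which follows by multiplying $(I-K\Phi_{11})K=K(I-\Phi_{11}K)$ by $(I-K\Phi_{11})^{-1}$ on the left and by $(I-\Phi_{11}K)^{-1}$ on the right. Applying this to the first summand in the displayed identity (and not to the second), I would then sandwich the difference by $(I-K_1\Phi_{11})^{-1}$ on the left and $(I-\Phi_{11}K_2)^{-1}$ on the right by expanding
\[
(I-K_1\Phi_{11})^{-1}K_1-K_2(I-\Phi_{11}K_2)^{-1}=(I-K_1\Phi_{11})^{-1}\bigl[K_1(I-\Phi_{11}K_2)-(I-K_1\Phi_{11})K_2\bigr](I-\Phi_{11}K_2)^{-1}.
\]
The bracket expands to $K_1-K_1\Phi_{11}K_2-K_2+K_1\Phi_{11}K_2=K_1-K_2$, which delivers the required identity.

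There is no real obstacle here: everything is a formal rational identity in the operators $K_1,K_2,\Phi_{11}$, carried out pointwise on $\BD$, with the inverses well-defined by the hypothesis on $\Phi_{11}(0)K_i(0)$ together with the fact that $\Phi_{11}K_i\in S(\cE,\cE)$ (so that $I-\Phi_{11}K_i$ and $I-K_i\Phi_{11}$ are invertible throughout $\BD$). The only mild care needed is to keep track of which side of $K_i$ the factor $\Phi_{11}$ sits on, so that the swap identity is applied consistently; once that is done, conjugating by $\Phi_{21}$ and $\Phi_{12}$ yields \eqref{EssComp}.
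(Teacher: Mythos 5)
Your computation is correct and is essentially identical to the paper's proof: both cancel $\Phi_{22}$, apply the swap identity $K_1(I-\Phi_{11}K_1)^{-1}=(I-K_1\Phi_{11})^{-1}K_1$ to the first summand only, and then factor out $(I-K_1\Phi_{11})^{-1}$ on the left and $(I-\Phi_{11}K_2)^{-1}$ on the right so that the middle bracket collapses to $K_1-K_2$. No issues.
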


\begin{proof}[\bf Proof.]
The identity is a consequence of the following computation.
\begin{align*}
&\fR_\Phi[K_1]-\fR_\Phi[K_2]
=\Phi_{21}(K_1(I-\Phi_{11}K_1)^{-1}-K_2(I-\Phi_{11}K_2)^{-1})\Phi_{12}\\
&\qquad=\Phi_{21}((I-K_1\Phi_{11})^{-1}K_1-K_2(I-\Phi_{11}K_2)^{-1})\Phi_{12}\\
&\qquad=\Phi_{21}(I\!-\!K_1\Phi_{11})^{-1}(K_1(I\!-\!\Phi_{11}K_2)\!-\!(I\!-\!K_1\Phi_{11})K_2)(I\!-\!\Phi_{11}K_2)^{-1}\Phi_{12}\\
&\qquad=\Phi_{21}(I-K_1\Phi_{11})^{-1}(K_1-K_2)(I-\Phi_{11}K_2)^{-1}\Phi_{12}.\qedhere
\end{align*}
\end{proof}

\begin{proof}[\bf Proof of Theorem \ref{T:RedInvar}.]
By Corollary \ref{C:directobs2}, part (i), we have $\Phi_{11} F_1\simI \Phi_{11} F_2$. Next apply Proposition \ref{P:limbehavior}, part (iii), with $F=\Phi_{11} F_1$, $G= \Phi_{11} F_2$ and $\la_t=0,\ t\in(0,1]$. It follows that $\|\Phi_{11}(0)F_1(0)\|=1$ implies $\|\Phi_{11}(0)F_2(0)\|=1$. Reversing the roles of $F_1$ and $F_2$, we see that $\|\Phi_{11}(0)F_1(0)\|=1$ holds if and only if $\|\Phi_{11}(0)F_2(0)\|=1$. This proves the first claim, because $\|\Phi_{11}(0)F_i(0)\|\leq1$ for $i=1,2$.

Since $F_1\simI F_2$, there exists a bounded function $\wtilQ$ on $\BD$ such that $F_1-F_2=D_{F_1^*}\wtilQ D_{F_2}$. Hence, together with identity \eqref{EssComp} we can conclude that
\[
\fR_\Phi[F_1]-\fR_\Phi[F_2]=
\Phi_{21}(I-F_1\Phi_{11})^{-1}D_{F_1^*}\wtilQ D_{F_2}(I-\Phi_{11}F_2)^{-1}\Phi_{12}.
\]
Now let $L$ and $L_*$ be as in Corollary \ref{C:confuncts}, with $K$ replaced by $F_1$ in the definition of $L$ and with $K$ replaced by $F_2$ in the definition of $L_*$, i.e.,
\[
L D_{\fR_\Phi[F_2]}=D_{F_2}(I-\Phi_{11}F_2)^{-1}\Phi_{12} \ands
D_{\fR_\Phi[F_2]^*}L_*=\Phi_{21}(I-F_1\Phi_{11})^{-1}D_{F_1^*}.
\]
Then we obtain that
\[
\fR_\Phi[F_1]-\fR_\Phi[F_2]=
D_{\fR_\Phi[F_2]^*}L_*\wtilQ L D_{\fR_\Phi[F_2]},
\]
and $\|L_*\wtilQ L\|_\infty\leq \|\wtilQ\|_\infty<\infty$. Hence $\fR_\Phi[F_1]\simI\fR_\Phi[F_2]$.
\end{proof}

\begin{corollary}\label{C:StrictSchur}
Let $\Phi$ be as in \eqref{Phi}. Then $\fR_\Phi$ maps $S_0(\cE,\cE')$ into $S_0(\cU,\cY)$ if and only if $\Phi_{22}\in S_0(\cU,\cY)$.
\end{corollary}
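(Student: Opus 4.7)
The plan is to derive this as a direct consequence of Theorem \ref{T:RedInvar} together with part (ii) of Corollary \ref{C:directobs2}, which says $S_0(\cE,\cE')$ and $S_0(\cU,\cY)$ are each a single $\simI$-equivalence class.

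For the ``only if'' direction I would test $\fR_\Phi$ on the constant function $\un{0}\in S_0(\cE,\cE')$. Since $\Phi_{11}(0)\un{0}(0)=0$, the Redheffer map is defined at $\un{0}$ and
\[
\fR_\Phi[\un{0}](\la)=\Phi_{22}(\la)+\Phi_{21}(\la)\cdot 0\cdot(I-0)^{-1}\Phi_{12}(\la)=\Phi_{22}(\la),\quad \la\in\BD.
\]
If $\fR_\Phi$ sends $S_0(\cE,\cE')$ into $S_0(\cU,\cY)$, then in particular $\Phi_{22}=\fR_\Phi[\un{0}]\in S_0(\cU,\cY)$.

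For the ``if'' direction, let $F\in S_0(\cE,\cE')$ be arbitrary. Since $\|F(0)\|\leq\|F\|_\infty<1$ and $\|\Phi_{11}(0)\|\leq1$, one has $\|\Phi_{11}(0)F(0)\|<1$, so $\fR_\Phi[F]$ is well defined. By Corollary \ref{C:directobs2}(ii) the set $S_0(\cE,\cE')$ forms a single $\simI$-equivalence class, hence $F\simI\un{0}$. Theorem \ref{T:RedInvar} then yields
\[
\fR_\Phi[F]\simI\fR_\Phi[\un{0}]=\Phi_{22}.
\]
Under the hypothesis $\Phi_{22}\in S_0(\cU,\cY)$, applying Corollary \ref{C:directobs2}(ii) once more to $S_0(\cU,\cY)$ (again a single equivalence class) forces $\fR_\Phi[F]\in S_0(\cU,\cY)$, which completes the proof.

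There is no real obstacle here; both directions are bookkeeping once Theorem \ref{T:RedInvar} and the characterization of $S_0$ as an $\simI$-class are in hand. The only step that requires a moment's care is verifying that $\fR_\Phi[F]$ is defined on all of $S_0(\cE,\cE')$, which follows from the trivial norm estimate above.
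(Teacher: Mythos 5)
Your proof is correct and follows the same route as the paper: necessity by evaluating $\fR_\Phi[\un{0}]=\Phi_{22}$, and sufficiency by combining Theorem \ref{T:RedInvar} with the fact (Corollary \ref{C:directobs2}(ii)) that $S_0(\cE,\cE')$ and $S_0(\cU,\cY)$ are single $\simI$-equivalence classes. Your write-up just makes explicit the well-definedness check and the chain $\fR_\Phi[F]\simI\fR_\Phi[\un{0}]=\Phi_{22}$ that the paper leaves implicit.
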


\begin{proof}[\bf Proof.]
Since $\Phi_{22}=\fR_\Phi[\un{0}]\in S_0(\cU,\cY)$, with $\un{0}\in S_0(\cE,\cE')$ indicating the constant function with value $0\in\cL(\cE,\cE')$, the necessity of the condition $\Phi_{21}\in S_0(\cU,\cY)$ is evident. The converse follows directly from Theorem \ref{T:RedInvar} and Corollary \ref{C:directobs2}, part (ii).
\end{proof}

Corollary \ref{C:StrictSchur} also appears in \cite{AD08}, see Theorem 2.27, part (4), and Page 219, and is a special case of \cite[Theorem 6.1]{tH11} (provided the non-required assumption $\Phi_{11}(0)=0$ in \cite{tH11} is left out). Proposition \ref{P:specialcase2} below provides a refinement of Corollary \ref{C:StrictSchur} under an additional constraint on $\Phi$.

While Redheffer maps preserve the equivalence relation $\simI$, this is not necessarily the case for the pre-order $\precI$. An example considered by Bakonyi \cite{B95} proves this, as discussed in the introduction. We present another example where the coefficient function $\Phi$ is an operator polynomial of degree one.

\begin{example}\label{E:nonprecinv}
Let $\de_0,\de_1,\ldots\in(0,1)$ with $\lim_{k\to\infty}\de_k=1$. Set
\[
N=\diag_{k\in\BN}(\de_k)\ons \ell^2_+ \ands
M=\diag_{k\in\BN}(\rho_k)\ons \ell^2_+\mbox{ with }\rho_k=\sqrt{1-\de_k^2}.
\]
Note that $N^*=N$ and $M=D_N$. Now take
\[
\Phi(\la)=\mat{cc}{\la N&M\\-\la M&N}=\mat{cc}{\la N&D_{N^*}\\-\la D_N&N^*}\ons\mat{c}{\ell^2_+\\\ell^2_+}\quad (\la\in\BD).
\]
Then $\Phi$ is an inner function. Let $F_1,F_2\in S(\ell^2_+,\ell^2_+)$ be given by $F_1(\la)=I_{\ell^2_+}$ and $F_2(\la)=0$ for $\la\in\BD$. Then $F_1\precI F_2$, by Corollary \ref{C:directobs2}. We claim that $\fR_\Phi[F_1]\ \not\!\!\precI \fR_\Phi[F_2]$. Set $G_j=\fR_\Phi[F_j]$, $j=1,2$. Then
\[
G_1(\la)=N-\la M(I-\la N)^{-1}M,\quad G_2(\la)=N\quad (\la\in\BD).
\]
Writing out $M$ and $N$ yields
\[
\mbox{$G_1(\la)=\diag_{k\in\BN}\left(\frac{\de_k-\la}{1-\la\de_k}\right) =\diag_{k\in\BN}(\vph_{\de_k,1}(\la))$}\quad (\la\in\BD).
\]
Here $\vph_{\de_k,1}$ is the inner function defined in Example \ref{E:SCex1}. Moreover, $D_{G_2}(\la)=D_{G_2^*}(\la)=M$ for each $\la\in\BD$, and we have
\[
G_1(\la)-G_2(\la)=-\la M(I-\la N)^{-1}M.
\]
Since $|\de_i|<1$ for each $i$, we have $\kr M=\{0\}$. This implies that in the formula
$G_1-G_2=D_{G_2^*}Q D_{G_2}$, we necessarily have
\[
Q(\la)=-\la(I-\la N)^{-1}=\diag_{k\in\BN}\left(\frac{-\la}{1-\la\de_k}\right).
\]
Note that $\sup_{\la\in\BD}|\frac{-\la}{1-\la\de_k}|\geq\frac{\de_k}{1-\de_k^2}$. The fact that $\lim_{k\to\infty}\de_k=1$, thus implies that $\lim_{k\to\infty}\sup_{\la\in\BD}|\frac{-\la}{1-\la\de_k}|=\infty$. This shows that $Q$ is not bounded on $\BD$. Hence $G_1\not\!\!\precI G_2$, as claimed.
%
\end{example}

We conclude this paper with the special case of Theorem \ref{T:RedInvar2}, i.e., $\Phi_{11}\in S_0(\cE',\cE)$, where the Redheffer maps do preserve the pre-order $\precI$.

\begin{proof}[\bf Proof of Theorem \ref{T:RedInvar2}.]
Since $F\precI G$, there exists a bounded function $Q$ on $\BD$ such that $F-G=D_{G^*}QD_G$. Define $L$ and $L_*$ as in Corollary \ref{C:confuncts}, with $K$ replaced by $G$. Then by \eqref{EssComp}, with $K_1=F$ and $K_2=G$, we have
\begin{align*}
\fR_\Phi[F]-\fR_\Phi[G]
&=\Phi_{21}(I-F\Phi_{11})^{-1}(F-G)(I-\Phi_{11}G)^{-1}\Phi_{12}\\
&=\Phi_{21}(I-F\Phi_{11})^{-1}D_{G^*}QD_G(I-\Phi_{11}G)^{-1}\Phi_{12}\\
&=\Phi_{21}(I-F\Phi_{11})^{-1}D_{G^*}Q L D_{\fR_\Phi[G]}.
\end{align*}
Thus, we have $\fR_\Phi[F]\precI\fR_\Phi[G]$ if there exists a bounded $\cL(\cD_G^*,\cD_{\fR_\Phi[G]}^*)$-valued function $N$ on $\BD$ such that
\[
\Phi_{21}(I-F\Phi_{11})^{-1}D_{G^*}=D_{\fR_\Phi[G]^*}N.
\]
Note that
\begin{align*}
&\Phi_{21}(I-F\Phi_{11})^{-1}D_{G^*}
=\Phi_{21}(I-G\Phi_{11})^{-1}D_{G^*}+\\
&\qquad\qquad\qquad\qquad\qquad\qquad+\Phi_{21}((I-F\Phi_{11})^{-1}-(I-G\Phi_{11})^{-1})D_{G^*}\\
&\qquad\qquad=D_{\fR_\Phi[G]^*}L_*
+\Phi_{21}(I-G\Phi_{11})^{-1}(F-G)(I-F\Phi_{11})^{-1}D_{G^*}\\
&\qquad\qquad=D_{\fR_\Phi[G]^*}L_*
+\Phi_{21}(I-G\Phi_{11})^{-1}D_{G^*}QD_G(I-F\Phi_{11})^{-1}D_{G^*}\\
&\qquad\qquad=D_{\fR_\Phi[G]^*}L_*(I+QD_G(I-F\Phi_{11})^{-1}D_{G^*}).
\end{align*}
Hence we can take $N=L_*(I+QD_G(I-F\Phi_{11})^{-1}D_{G^*})$, which is bounded on $\BD$, since $I-F\Phi_{11}$ is by assumption boundedly invertible.
\end{proof}

With a few extra assumptions, in addition to $\Phi_{11}\in S_0(\cE',\cE)$, the behavior of $\fR_\Phi$ improves even further. We start with some simple observations based on the inequalities in \eqref{Ineqs}.

\begin{corollary}\label{C:specialcase1}
Let $\Phi$ be as in \eqref{Phi} with $\Phi_{11}\in S_0(\cE',\cE)$. If $\Phi$ is inner (resp.\ $*$-inner), then $\fR_\Phi$ maps inner (resp.\ $*$-inner) functions in $S(\cE,\cE')$ to inner (resp.\ $*$-inner) functions in $S(\cU,\cY)$.
\end{corollary}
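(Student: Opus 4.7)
The plan is to deduce the claim directly from the boundary case of Lemma \ref{L:EssIneq}. The essential observation is that the hypothesis $\Phi_{11}\in S_0(\cE',\cE)$ is strong enough to guarantee bounded invertibility of $I-\Phi_{11}K$ on all of $\BD$ and a.e.\ on $\BT$, uniformly in $K\in S(\cE,\cE')$. Indeed, setting $c=\|\Phi_{11}\|_\infty<1$, one has $\|\Phi_{11}(\la)K(\la)\|\leq c<1$ for every $\la\in\BD$ and (taking nontangential limits) a.e.\ on $\BT$, so the Neumann series gives $(I-\Phi_{11}K)^{-1}\in H^\infty(\cE',\cE')$ with norm $\leq (1-c)^{-1}$, and its nontangential limits exist a.e.\ on $\BT$ and equal $(I-\Phi_{11}(e^{it})K(e^{it}))^{-1}$.

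First I would verify that all hypotheses of the boundary statement of Lemma \ref{L:EssIneq} are met: the nontangential limits of $\Phi$, $K$ and $(I-\Phi_{11}K)^{-1}$ all exist a.e.\ on $\BT$. This has just been established for $(I-\Phi_{11}K)^{-1}$, and for $\Phi$ and $K$ it is standard Hardy space theory since $\Phi\in S(\cE'\oplus\cU,\cE\oplus\cY)$ and $K\in S(\cE,\cE')$.

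Next, assume $\Phi$ is inner and $K\in S(\cE,\cE')$ is inner. By Lemma \ref{L:EssIneq}, the first inequality in \eqref{Ineqs} extends to $\BT$ with equality, so a.e.\ on $\BT$
\[
D_{\fR_\Phi[K](e^{it})}^2
=\Phi_{12}(e^{it})^*(I-\Phi_{11}(e^{it})K(e^{it}))^{-*} D_{K(e^{it})}^2 (I-\Phi_{11}(e^{it})K(e^{it}))^{-1}\Phi_{12}(e^{it}).
\]
Since $K$ is inner, $D_{K(e^{it})}=0$ a.e.\ on $\BT$, hence the right hand side vanishes a.e., giving $D_{\fR_\Phi[K](e^{it})}=0$ a.e., i.e., $\fR_\Phi[K]$ is inner. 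The $*$-inner case is identical, now applying the second inequality in \eqref{Ineqs}, which becomes an equality on $\BT$ when $\Phi$ is $*$-inner, and using that $D_{K^*(e^{it})}=0$ a.e.\ when $K$ is $*$-inner.

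There is no real obstacle: the only thing to watch is that one genuinely has equality (not just inequality) on the boundary, which requires the inner/$*$-inner hypothesis on $\Phi$ rather than on $K$ alone. The strictness $\Phi_{11}\in S_0(\cE',\cE)$ serves exclusively to place us in the regime where $(I-\Phi_{11}K)^{-1}$ extends nontangentially to $\BT$ for every $K\in S(\cE,\cE')$, so that the boundary clause of Lemma \ref{L:EssIneq} is applicable without any additional assumption on $K$.
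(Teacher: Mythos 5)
Your proof is correct and follows essentially the same route as the paper's: apply the equality case of the boundary statement of Lemma \ref{L:EssIneq} (valid since $\Phi$ is inner, resp.\ $*$-inner) and use that $D_{K}=0$ (resp.\ $D_{K^*}=0$) a.e.\ on $\BT$ to force $D_{\fR_\Phi[K]}=0$ (resp.\ $D_{\fR_\Phi[K]^*}=0$) a.e.\ there. Your explicit verification via the Neumann series that $(I-\Phi_{11}K)^{-1}\in H^\infty$ and has nontangential boundary limits is a point the paper leaves implicit, and is a welcome addition.
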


\begin{proof}[\bf Proof.]
Assume $\Phi$ is inner. By Lemma \ref{L:EssIneq}, the first inequality in \eqref{Ineqs} extend to a.e.\ point of $\BT$, where it becomes an identity rather than an inequality. Now if $K\in S(\cE,\cE')$ is inner, then $D_K(e^{it})=0$ for a.e.\ $e^{it}\in\BT$. Thus a.e.\ on $\BT$ the right hand side in the first inequality in \eqref{Ineqs} is 0, and hence also $D_{\fR_\Phi[K]}=0$ a.e.\ on $\BT$. Thus $\fR_\Phi[K]$ is inner. The claim about the case where $\Phi$ is $*$-inner is proved similarly, now using the second inequality of \eqref{Ineqs}.
\end{proof}

Given $K\in H^\infty(\cU,\cY)$, we say $K$ has an $H^\infty$-inverse in case $K(\la)$ is invertible for each $\la\in\BD$ and $\la\mapsto K(\la)^{-1}$ in $H^\infty(\cY,\cU)$; note that this is equivalent to $T_K$ being invertible. The function $\la\mapsto K(\la)^{-1}$ will be denoted by $K^{-1}$.

\begin{proposition}\label{P:specialcase2}
Let $\Phi$ be as in \eqref{Phi} such that $\Phi_{12}$ or $\Phi_{21}$ has an $H^\infty$-inverse. Then $\Phi_{11}\in S_0(\cE',\cE)$, $\Phi_{22}\in S_0(\cU,\cY)$ and $\fR_\Phi$ maps $S_0(\cE',\cE)$ into $S_0(\cU,\cY)$. Moreover, in case $\Phi_{12}$ has an $H^\infty$-inverse, then for any $F\in S(\cE,\cE')$ we have
\[
\dim \cD_F \leq \dim \cD_{\fR_\Phi[F]} \ands  \dim(\cU\ominus \cD_{\fR_\Phi[F]})\leq \dim(\cE\ominus \cD_F),
\]
and for a.e.\ $e^{it}\in\BT$
\begin{align}
\dim \oran D_{F(e^{it})} \leq \dim \oran D_{\fR_\Phi[F](e^{it})},\notag\\
\dim(\cU\ominus \oran D_{\fR_\Phi[F](e^{it})})\leq \dim(\cE\ominus \oran D_{F(e^{it})}),\label{DimIneqs1}
\end{align}
with equality in \eqref{DimIneqs1} in case $\Phi$ is inner. Similarly, if $\Phi_{21}$ has an $H^\infty$-inverse, then for any $F\in S(\cE,\cE')$ we have
\[
\dim \cD_F^* \leq \dim \cD_{\fR_\Phi[F]}^* \ands  \dim(\cY\ominus \cD_{\fR_\Phi[F]}^*)\leq \dim(\cE'\ominus \cD_F^*),
\]
and for a.e.\ $e^{it}\in\BT$
\begin{align}
\dim \oran D_{F^*(e^{it})} \leq \dim \oran D_{\fR_\Phi[F]^*(e^{it})},\notag\\
\dim(\cU\ominus \oran D_{\fR_\Phi[F]^*(e^{it})})\leq \dim(\cE\ominus \oran D_{F^*(e^{it})}),\label{DimIneqs2}
\end{align}
with equality in \eqref{DimIneqs2} in case $\Phi$ is $*$-inner.
\end{proposition}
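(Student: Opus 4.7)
My plan is to handle both cases in parallel, focusing the exposition on the hypothesis that $\Phi_{12}$ has an $H^\infty$-inverse; the $\Phi_{21}$ case is then the adjoint analogue, obtained by using the second inequality of \eqref{Ineqs} and the operator $L_*$ from Corollary \ref{C:confuncts} in place of the first inequality and $L$, and by replacing ``inner'' with ``$*$-inner''.

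First I would establish that $\Phi_{11}\in S_0(\cE',\cE)$ and $\Phi_{22}\in S_0(\cU,\cY)$. Set $\gamma=\|\Phi_{12}^{-1}\|_\infty^{-1}>0$. Contractivity of $\Phi(\la)$ yields $\Phi_{11}(\la)\Phi_{11}(\la)^*+\Phi_{12}(\la)\Phi_{12}(\la)^*\leq I$ and $\Phi_{12}(\la)^*\Phi_{12}(\la)+\Phi_{22}(\la)^*\Phi_{22}(\la)\leq I$ for each $\la\in\BD$, while $\Phi_{12}^{-1}\in H^\infty$ forces $\Phi_{12}(\la)\Phi_{12}(\la)^*\geq\gamma^2 I$ and $\Phi_{12}(\la)^*\Phi_{12}(\la)\geq\gamma^2 I$. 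Hence $\|\Phi_{11}\|_\infty^2\leq 1-\gamma^2$ and $\|\Phi_{22}\|_\infty^2\leq 1-\gamma^2$, and Corollary \ref{C:StrictSchur} then delivers the mapping property $\fR_\Phi(S_0(\cE,\cE'))\subset S_0(\cU,\cY)$.

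For the dimension inequalities I would invoke Corollary \ref{C:confuncts} to produce an $\cL_1(\cD_{\fR_\Phi[F]},\cD_F)$-valued function $L$ on $\BD$ with $LD_{\fR_\Phi[F]}=D_F(I-\Phi_{11}F)^{-1}\Phi_{12}$; since $\|\Phi_{11}F\|_\infty<1$, the inverse $(I-\Phi_{11}F)^{-1}$ lies in $H^\infty$ and the identity extends to a.e.\ $e^{it}\in\BT$. Evaluating at $\la=0$, both $\Phi_{12}(0)$ and $I-\Phi_{11}(0)F(0)$ are invertible, so the right-hand side has range $\ran D_{F(0)}$, whose closure is $\cD_F$ by Lemma \ref{L:eqincl}. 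Consequently $L(0)$ maps $\cD_{\fR_\Phi[F]}$ into $\cD_F$ with dense image, giving $\dim\cD_F\leq\dim\cD_{\fR_\Phi[F]}$. For the codimension bound, any $u\in\cU\ominus\cD_{\fR_\Phi[F]}=\kr D_{\fR_\Phi[F]}(0)$ forces $D_{F(0)}(I-\Phi_{11}(0)F(0))^{-1}\Phi_{12}(0)u=0$, so $u\mapsto(I-\Phi_{11}(0)F(0))^{-1}\Phi_{12}(0)u$ is an injection of $\cU\ominus\cD_{\fR_\Phi[F]}$ into $\cE\ominus\cD_F$. The same argument applied at a.e.\ $e^{it}\in\BT$ yields \eqref{DimIneqs1}. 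When $\Phi$ is inner, Lemma \ref{L:EssIneq} upgrades the first inequality of \eqref{Ineqs} to an equality a.e.\ on $\BT$; then $L(e^{it})|_{\cD_{\fR_\Phi[F](e^{it})}}$ is an isometry with range the closure of $\ran D_{F(e^{it})}$, and the implication underlying the codimension injection becomes an equivalence, promoting both bounds in \eqref{DimIneqs1} to equalities.

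The main obstacle I expect is the careful bookkeeping at the boundary: one must justify that the factorisation identity from Corollary \ref{C:confuncts} genuinely extends to a.e.\ $e^{it}\in\BT$ with the correct isometric interpretation in the inner case, and one must resist conflating the global defect space $\cD_F=\oran D_F(\la)$, which is independent of $\la\in\BD$ by Lemma \ref{L:eqincl}, with its possibly strictly smaller boundary analogue $\oran D_{F(e^{it})}$. Everything else is a straightforward transcription, and the $\Phi_{21}$ case runs in the adjoint direction without essential modification.
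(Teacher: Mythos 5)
Your proposal is correct and follows essentially the same route as the paper: contractivity of the rows/columns of $\Phi$ together with $\Phi_{12}^{-1}\in H^\infty$ (or $\Phi_{21}^{-1}\in H^\infty$) forces $\Phi_{11}$ and $\Phi_{22}$ to be strict, Corollary \ref{C:StrictSchur} gives the mapping property, and the dimension bounds come from the first inequality in \eqref{Ineqs} via the invertible factor $(I-\Phi_{11}F)^{-1}\Phi_{12}$, with the inner case upgrading that inequality to an identity a.e.\ on $\BT$. The only cosmetic difference is that you argue pointwise with $\Phi(\la)$ and with the dense-range map $L(0):\cD_{\fR_\Phi[F]}\to\cD_F$, where the paper works with $T_\Phi$ and the adjoint injection $\Phi_{12}^*(I-\Phi_{11}F)^{-*}:\cD_F\to\cD_{\fR_\Phi[F]}$; these are equivalent formulations.
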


\begin{proof}[\bf Proof.]
Identifying $H^2(\cE'\oplus\cU)$ with $H^2(\cE')\oplus H^2(\cU)$ and $H^2(\cE\oplus\cY)$ with $H^2(\cE)\oplus H^2(\cY)$, we can write
\[
T_\Phi=\mat{cc}{T_{\Phi_{11}}& T_{\Phi_{12}}\\ T_{\Phi_{21}}& T_{\Phi_{22}}}.
\]
Now if $\Phi_{12}$ or $\Phi_{21}$ has an $H^\infty$-inverse, then $T_{\Phi_{12}}$ or $T_{\Phi_{21}}$ is invertible, and thus, since $T_\Phi$ is contractive, both $T_{\Phi_{11}}$ and $T_{\Phi_{22}}$ are strict contractions, that is, $\Phi_{11}\in S_0(\cE',\cE)$ and $\Phi_{22}\in S_0(\cU,\cY)$. In particular, $\fR_\Phi$ maps $S_0(\cE,\cE')$ into $S_0(\cU,\cY)$, by Corollary \ref{C:StrictSchur}.

Assume $\Phi_{12}$ has an $H^\infty$-inverse. Fix an arbitrary $\la\in\BD$. By the first inequality in \eqref{Ineqs}, $(I-\Phi_{11}(\la)F(\la))^{-1}\Phi_{12}(\la)$ maps  $\cU\ominus \cD_{\fR_\Phi[F]}=\kr D_{\fR_\Phi[F](\la)}$ into $\cE\ominus \cD_F=\kr D_{F(\la)}$. Since $\Phi_{11}\in S_0(\cE',\cE)$, the operator $(I-\Phi_{11}(\la)F(\la))^{-1}\Phi_{12}(\la)$ is invertible, and thus $\dim(\cU\ominus \cD_{\fR_\Phi[F]})\leq \dim(\cE\ominus \cD_F)$. Furthermore, the fact that $(I-\Phi_{11}(\la)F(\la))^{-1}\Phi_{12}(\la)$ is invertible, implies
\[
\oran D_{F(\la)}^2(I-\Phi_{11}(\la)F(\la))^{-1}\Phi_{12}(\la)=\oran D_{F(\la)}=\cD_F.
\]
Again using the first inequality in \eqref{Ineqs} we see that $\Phi_{12}(\la)^*(I-\Phi_{11}(\la)F(\la))^{-*}$ is invertible and maps $\cD_F$ into $\oran D_{\fR_\Phi[F](\la)}=\cD_{\fR_\Phi[F]}$. Hence $\dim \cD_F \leq \dim \cD_{\fR_\Phi[F]}$. The proof of the inequalities \eqref{DimIneqs1} at a point $e^{it}\in\BT$ is similar. In case $\Phi$ is inner, the first inequality in \eqref{Ineqs} extends to an identity a.e.\ on $\BT$, and thus
\[
(I-\Phi_{11}F)\Phi_{12}^{-*}D_{\fR_\Phi[F]}^2\Phi_{12}^{-1}(I-\Phi_{11}F)=D_F^2\quad \mbox{a.e.\ on } \BT.
\]
A similar argument as used in deriving the inequalities \eqref{DimIneqs1} now shows the inequalities \eqref{DimIneqs1} also hold in the opposite direction. Hence we have \eqref{DimIneqs1} a.e.\ with equalities rather than inequalities. The proof for the case that $\Phi_{21}$ has an $H^\infty$-inverse follows a similar path, now using the second inequality in \eqref{Ineqs}.
\end{proof}

Finally, we consider the question when $\fR_\Phi$ induces an injective map between the equivalence classes. In general this is not the case, simply take take $\Phi_{12}=\un{0}$, $\Phi_{21}=\un{0}$ and $\Phi_{11}\in S_0(\cE',\cE)$ and $\Phi_{22}\in S(\cU,\cY)$ arbitrary. Then $\fR_\Phi$ maps $S(\cE,\cE')$ onto the set $\{\Phi_{22}\}$. However, in case we combine the additional assumptions from Corollary \ref{C:specialcase1} and Proposition \ref{P:specialcase2}, $\fR_\Phi$ will  have this property.

\begin{proposition}\label{P:InjectClass}
Let $\Phi$ be as in \eqref{Phi}. Assume $\Phi$ is two-sided inner and $\Phi_{12}$ or $\Phi_{21}$ admits an $H^\infty$-inverse. Then for any $F,G\in S(\cE,\cE')$ with $\fR_\Phi[F]\simI \fR_\Phi[G]$ we have $F\simI G$.
\end{proposition}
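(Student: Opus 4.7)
My plan is to start from $\fR_\Phi[F]\simI\fR_\Phi[G]$ and manufacture, a.e.\ on $\BT$, an identity of the form $F-G=D_{F^*}\wtilQ' D_G$ with $\wtilQ'$ bounded; by characterization (LIERi) of Theorem \ref{T:eqrelSC} at the Laurent-operator level, together with the equivalence $L_K\prec L_{K'}\iff T_K\prec T_{K'}$ recorded at the start of Section \ref{S:LaurentPreOrder}, this will force $F\simI G$. Two identities drive the argument: on one hand, Theorem \ref{T:eqrelSC} provides a bounded function $\wtilQ$ with
\[
\fR_\Phi[F]-\fR_\Phi[G]=D_{\fR_\Phi[F]^*}\wtilQ\,D_{\fR_\Phi[G]};
\]
on the other hand, Lemma \ref{L:EssComp} gives
\[
\fR_\Phi[F]-\fR_\Phi[G]=\Phi_{21}(I-F\Phi_{11})^{-1}(F-G)(I-\Phi_{11}G)^{-1}\Phi_{12}.
\]
By Proposition \ref{P:specialcase2} we have $\Phi_{11}\in S_0(\cE',\cE)$ and $\Phi_{22}\in S_0(\cU,\cY)$, so the two $(I-\cdots)^{-1}$ factors are boundedly invertible on $\BD$ and a.e.\ on $\BT$.

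Next, the two-sided inner assumption lets me invoke Corollary \ref{C:confuncts} with control on $\BT$. Applied with $K=G$, using that $\Phi$ is inner, it supplies $L$ satisfying $LD_{\fR_\Phi[G]}=D_G(I-\Phi_{11}G)^{-1}\Phi_{12}$ and with $L(e^{it})|_{\cD_{\fR_\Phi[G](e^{it})}}$ isometric a.e.\ on $\BT$; applied with $K=F$, using that $\Phi$ is $*$-inner, it supplies $L_*$ with $D_{\fR_\Phi[F]^*}L_*=\Phi_{21}(I-F\Phi_{11})^{-1}D_{F^*}$ and $L_*(e^{it})^*|_{\cD_{\fR_\Phi[F]^*(e^{it})}}$ isometric a.e.\ on $\BT$. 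The isometry relations translate a.e.\ on $\BT$ into $D_{\fR_\Phi[F]^*}=D_{\fR_\Phi[F]^*}L_*L_*^*$ and $D_{\fR_\Phi[G]}=L^*LD_{\fR_\Phi[G]}$. Substituting these into the $\wtilQ$-formula and comparing with Lemma \ref{L:EssComp} produces, a.e.\ on $\BT$,
\[
\Phi_{21}(I-F\Phi_{11})^{-1}(F-G)(I-\Phi_{11}G)^{-1}\Phi_{12}=\Phi_{21}(I-F\Phi_{11})^{-1}D_{F^*}L_*^*\wtilQ L^*D_G(I-\Phi_{11}G)^{-1}\Phi_{12}.
\]

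The main obstacle is to strip off the four outer factors a.e.\ on $\BT$ without assuming $\Phi_{12}$ or $\Phi_{21}$ globally invertible: the two $(I-\cdots)^{-1}$ factors are handled by their own (bounded) inverses, but $\Phi_{12}$ and $\Phi_{21}$ need care. This is resolved by exploiting the two-sided inner identity $\Phi^*\Phi=\Phi\Phi^*=I$ a.e.\ on $\BT$, whose $(1,1)$ blocks yield
\[
\Phi_{21}^*\Phi_{21}=I-\Phi_{11}^*\Phi_{11}=D_{\Phi_{11}}^2,\qquad \Phi_{12}\Phi_{12}^*=I-\Phi_{11}\Phi_{11}^*=D_{\Phi_{11}^*}^2,
\]
each of which is bounded below by $(1-\|\Phi_{11}\|_\infty^2)I>0$ since $\Phi_{11}\in S_0$. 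Thus a.e.\ on $\BT$, $\Phi_{12}$ admits a uniformly bounded right inverse and $\Phi_{21}$ a uniformly bounded left inverse. Using the right inverse of $\Phi_{12}$ and the ordinary inverse of $(I-\Phi_{11}G)$ to cancel on the right, and then the left inverse of $\Phi_{21}$ together with the inverse of $(I-F\Phi_{11})$ to cancel on the left, reduces the displayed identity to $F-G=D_{F^*}(L_*^*\wtilQ L^*)D_G$ a.e.\ on $\BT$. The factor $\wtilQ':=L_*^*\wtilQ L^*$ is bounded on $\BT$ as a product of bounded operator-valued functions, and the conclusion $F\simI G$ follows as announced. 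The hypothesis that one of $\Phi_{12}$ or $\Phi_{21}$ has an $H^\infty$-inverse is used only through Proposition \ref{P:specialcase2} to secure $\Phi_{11}\in S_0$, and the argument is uniform in which of the two inverses is given.
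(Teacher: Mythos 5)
Your argument is, in all essentials, the proof the paper gives: the same combination of Theorem \ref{T:eqrelSC} (LIERi), Lemma \ref{L:EssComp}, the boundary isometry statements of Corollary \ref{C:confuncts}, and a.e.\ cancellation of the outer factors using the two-sided inner identity for $\Phi$ together with $\Phi_{11}\in S_0(\cE',\cE)$. Your way of stripping the outer factors (a left inverse of $\Phi_{21}$ from $\Phi_{21}^*\Phi_{21}=D_{\Phi_{11}}^2\geq\de I$ and a right inverse of $\Phi_{12}$ from $\Phi_{12}\Phi_{12}^*=D_{\Phi_{11}^*}^2\geq\de I$) is a harmless variant of the paper's, which derives two-sided a.e.\ invertibility of both $\Phi_{12}$ and $\Phi_{21}$ from all four corner identities.

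There is one point you gloss over, and it is the one point the paper found it necessary to isolate in a separate lemma (Lemma \ref{L:LinfLeech}). To conclude $F\simI G$ from $F-G=D_{F^*}(L_*^*\wtilQ L^*)D_G$ a.e.\ on $\BT$ via (LIERi), you need $L_*^*\wtilQ L^*$ to be an element of $L^\infty(\cE,\cE')$, i.e., a \emph{measurable} essentially bounded function, not merely pointwise contraction-valued. The functions $L$ and $L_*$ of Corollary \ref{C:confuncts} are produced by applying Douglas' lemma separately at each boundary point, so their pointwise norms are $\leq 1$, but their measurability on $\BT$ is not addressed by that corollary; "bounded as a product of bounded operator-valued functions" does not settle this. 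The paper's fix is Lemma \ref{L:LinfLeech}: using $\Phi_{11}\in S_0(\cE',\cE)$ to make the relevant symbols $L^\infty$-functions, it re-derives $L$ and $L_*$ globally as symbols of Laurent operators via Proposition \ref{P:CLTapp} (commutant lifting), after which $L_*^*\wtilQ L^*\in L^\infty$ is immediate. Your proof needs either this lemma or an explicit argument that the canonical pointwise Douglas solutions are weakly measurable; as written, that step is a gap.
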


It is convenient to first prove the following addition to Corollary \ref{C:confuncts}.

\begin{lemma}\label{L:LinfLeech}
Let $\Phi$ be as in \eqref{Phi} with $\Phi_{11}\in S_0(\cE',\cE)$ and let $K\in S(\cE,\cE')$. Then the functions $L$ and $L_*$ defined in Corollary \ref{C:confuncts} are defined a.e.\ on $\BT$ and when restricted to $\BT$ without loss of generality $L\in L^\infty(\cD_{\fR_\Phi[K]},\cD_K)$ and $L_*\in L^\infty(\cD_{\fR_\Phi[K]}^*,\cD_K^*)$.
\end{lemma}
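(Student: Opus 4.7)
First I would verify that $L$ and $L_*$ are indeed defined a.e.\ on $\BT$, which by Corollary \ref{C:confuncts} reduces to showing that $(I-\Phi_{11}K)^{-1}$ has nontangential limits a.e. Setting $c := \|\Phi_{11}\|_\infty < 1$, the estimate $\|\Phi_{11} K\|_\infty \leq c$ allows the Neumann series $(I-\Phi_{11}K)^{-1} = \sum_{n\geq 0}(\Phi_{11}K)^n$ to converge in the $H^\infty$ norm, so $(I-\Phi_{11}K)^{-1} \in H^\infty(\cE,\cE)$; the analogous argument gives $(I-K\Phi_{11})^{-1} \in H^\infty(\cE',\cE')$. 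Both functions therefore admit nontangential limits a.e.\ on $\BT$, so the definitions of $L$ and $L_*$ in Corollary \ref{C:confuncts} extend to a.e.\ point of $\BT$.

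For the $L^\infty$ assertion, set $H := (I-\Phi_{11}K)^{-1}\Phi_{12} \in H^\infty(\cU,\cE)$ and $H_* := \Phi_{21}(I-K\Phi_{11})^{-1} \in H^\infty(\cE',\cY)$. By Lemma \ref{L:EssIneq} the inequalities \eqref{Ineqs} extend to a.e.\ $e^{it}\in\BT$; since $D_K, D_{K^*}$ are self-adjoint they rearrange to
\[
D_{\fR_\Phi[K]}^2 \geq (D_K H)^*(D_K H), \qquad D_{\fR_\Phi[K]^*}^2 \geq (H_* D_{K^*})(H_* D_{K^*})^*.
\]
Passing to Laurent operators (all factors are in $L^\infty$, so $L_{AB}=L_A L_B$ and $L_A^*=L_{A^*}$), Douglas' lemma produces contractions $Z \in \cL(L^2_\cU, L^2_\cE)$ and $Z_* \in \cL(L^2_{\cE'}, L^2_\cY)$ satisfying $L_{D_K H} = Z\, L_{D_{\fR_\Phi[K]}}$ and $L_{H_* D_{K^*}} = L_{D_{\fR_\Phi[K]^*}}\, Z_*$.

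Each of these identities fits the template of Proposition \ref{P:CLTapp} after filling the remaining slot with the identity constant function, so $Z = L_L$ and $Z_* = L_{L_*}$ for some $L\in L^\infty(\cU,\cE)$ and $L_*\in L^\infty(\cE',\cY)$ with $\|L\|_\infty,\|L_*\|_\infty\leq 1$, satisfying $L\, D_{\fR_\Phi[K]} = D_K H$ and $D_{\fR_\Phi[K]^*}\, L_* = H_* D_{K^*}$ a.e.\ on $\BT$ — these are exactly the defining identities of Corollary \ref{C:confuncts}, so $L$ and $L_*$ coincide (up to the Douglas freedom on the relevant kernels) with the functions there. Finally, since the range of $D_{\fR_\Phi[K]}$ lies in $\cD_{\fR_\Phi[K]}$ and that of $D_K H$ in $\cD_K$, replacing $L$ by $P_{\cD_K} L\, P_{\cD_{\fR_\Phi[K]}}$ preserves the defining identity and places $L$ in the announced space; the analogous cut-off does the same for $L_*$. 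The only genuine obstacle in this argument is securing a measurable version of the pointwise Douglas factorization, and the detour through Laurent operators combined with the commutant-lifting content of Proposition \ref{P:CLTapp} is precisely what supplies one.
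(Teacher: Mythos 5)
Your proposal is correct and follows essentially the same route as the paper: both reduce the boundary inequalities \eqref{Ineqs} to inequalities between Laurent operators, apply Douglas' lemma there, and then invoke Proposition \ref{P:CLTapp} to replace the abstract Douglas factor by a Laurent (multiplication) operator with an $L^\infty$ symbol. Your added details (the Neumann-series argument for $(I-\Phi_{11}K)^{-1}\in H^\infty$ and the final projection cut-off onto the defect spaces) are harmless elaborations of steps the paper leaves implicit.
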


\begin{proof}[\bf Proof.]
Since $\Phi_{11}\in S_0(\cE',\cE)$, the functions $(I-\Phi_{11}K)^{-1}$ and $(I-K\Phi_{11})^{-1}$ are $H^\infty$-functions as well. Set
\begin{align*}
N=D_{\fR_\Phi[K]},\quad M=D_K(I-\Phi_{11}K)^{-1}\Phi_{12},\\
N_*=D_{\fR_\Phi[K]^*},\quad M_*=D_{K^*}(I-\Phi_{11}K)^{-*}\Phi_{21}^*.
\end{align*}
Then $N$, $M$, $N_*$ and $M_*$ are $L^\infty$-functions, and the inequalities \eqref{Ineqs} translate to
\[
L_N^*L_N\geq L_M^*L_M\ands L_{N_*}^*L_{N_*}\geq L_{M_*}^*L_{M_*}.
\]
Thus, by Douglas lemma there exist $X\in\cL_1(L^2(\cD_{\fR_\Phi [K]}), L^2(\cD_K))$ and $Y\in\cL_1(L^2(\cD_K^*), L^2(\cD_{\fR_\Phi[K]}^*))$ such that $X L_N=L_M$ and $L_{N_*}^* Y=L_{M_*}^*$. It then follows by Proposition \ref{P:CLTapp} that without loss of generality $X=L_L$ and $Y=L_{L_*}$ for functions $L\in L^\infty_1(\cD_{\fR_\Phi[K]},\cD_K)$ and $L_*\in L^\infty_1(\cD_{\fR_\Phi[K]}^*,\cD_K^*)$.
\end{proof}

\begin{proof}[\bf Proof of Proposition \ref{P:InjectClass}.]
We first remark that our assumptions imply both $\Phi_{12}$ and $\Phi_{21}$ are a.e.\ invertible on $\BT$. Indeed, by Proposition \ref{P:specialcase2}, we know that $\Phi_{11}\in S_0(\cE',\cE)$ and $\Phi_{22}\in S_0(\cU,\cY)$. Set $\de=\min\{1-\|\Phi_{22}\|_\infty^2,\ 1-\|\Phi_{11}\|_\infty^2\}>0$.
Since $\Phi$ is two-sided inner, we have for a.e.\ $e^{it}\in\BT$ that
\begin{align*}
\Phi_{12}^*(e^{it})\Phi_{12}(e^{it})=D_{\Phi_{22}(\la)}^2\geq \de I\ands
\Phi_{12}(e^{it})\Phi_{12}^*(e^{it})=D_{\Phi_{11}^*(\la)}^2\geq \de I,\\
\Phi_{21}^*(e^{it})\Phi_{21}(e^{it})=D_{\Phi_{11}(\la)}^2\geq \de I\ands
\Phi_{21}(e^{it})\Phi_{21}^*(e^{it})=D_{\Phi_{22}^*(\la)}^2\geq \de I.
\end{align*}
Hence both $\Phi_{12}$ and $\Phi_{21}$ are invertible a.e.\ on $\BT$, as claimed.

Now let $F,G\in S(\cE,\cE')$ such that $\fR_\Phi[F]\simI \fR_\Phi[G]$.  Define $L$ and $L_*$ as in Corollary \ref{C:confuncts} with $K=G$ in the definition of $L$ and $K=F$ in the definition of $L_*$. Then, again using that $\Phi$ is two sided inner, we obtain that
\[
D_{\fR_\Phi[G]}=L^*D_{G}(I-\Phi_{11}G)^{-1}\Phi_{12} \ands
D_{\fR_\Phi[F]^*}=\Phi_{21}(I-F\Phi_{11})^{-1}D_{F^*}L_*^*
\]
holds a.e.\ on $\BT$.

Since $\fR_\Phi[F]\precI \fR_\Phi[G]$, there exists a function $\wtilQ\in L^\infty (\cD_{\fR_\Phi[G]},\cD_{\fR_\Phi[G]^*})$ such that
\begin{align*}
\fR_\Phi[F]-\fR_\Phi[G]&= D_{\fR_\Phi[F]^*} \wtilQ D_{\fR_\Phi[G]}\\
&=\Phi_{21}(I-F\Phi_{11})^{-1}D_{F^*}L_*^*\wtilQ L^*D_{G}(I-\Phi_{11}G)^{-1}\Phi_{12}.
\end{align*}
By Lemma \ref{L:EssComp}, we have
\[
\fR_\Phi[F]-\fR_\Phi[G]=\Phi_{21}(I-F\Phi_{11})^{-1}(F-G)(I-\Phi_{11}G)^{-1}\Phi_{12}.
\]
Now $\Phi_{21}$, $\Phi_{12}$, $(I-F\Phi_{11})^{-1}$ and $(I-\Phi_{11}G)^{-1}$ are all a.e.\ invertible on $\BD$, which yields
\[
F-G=D_{F^*}L_*^*\wtilQ L^*D_{G},\quad \mbox{a.e. on }\BT.
\]
By Lemma \ref{L:LinfLeech} the functions $L$ and $L_*$ are $L^\infty$-functions. Hence $Q=L_*^*\wtilQ L^*\in L^\infty(\cD_G,\cD_{F}^*)$. This yields that $F\simI G$, as functions in $L_1^\infty(\cE,\cE')$. However, this also implies that $F\simI G$, as functions in $S(\cE,\cE')$.
\end{proof}



\end{document}